\numberwithin{equation}{section}
\numberwithin{equation}{subsection}
\theoremstyle{plain}
\newtheorem{theorem}[equation]{Theorem}
\newtheorem{lemma}[equation]{Lemma}
\newtheorem{proposition}[equation]{Proposition}
\newtheorem{corollary}[equation]{Corollary}
\newtheorem{question}[equation]{Question}
\newtheorem{conjecture}[equation]{Conjecture}
\theoremstyle{definition}\newtheorem{notation}[equation]{Notation}
\newtheorem{example}[equation]{Example}\newtheorem{examples}[equation]{Examples}
\newtheorem{remark}[equation]{Remark}
\newtheorem{definition}[equation]{Definition}
\newtheorem{problem}[equation]{Problem}
\newcommand{\bZ}{{\mathbb Z}}
\newcommand{\calC}{{\mathcal C}}
\newcommand{\cJ}{{\mathcal J}}
\newcommand{\cV}{{\mathcal V}}
\newcommand{\cS}{{\mathcal S}}\newcommand{\calS}{{\mathcal S}}
\newcommand{\cP}{{\mathcal P}}
\newcommand{\cI}{{\mathcal I}}
\newcommand{\cO}{{\mathcal O}}\newcommand{\calO}{{\mathcal O}}
\newcommand{\cL}{{\mathcal L}}
\newcommand{\calL}{{\mathcal L}}
\newcommand{\calQ}{{\mathcal Q}}
\DeclareMathOperator{\Hom}{Hom}
\newcommand{\tX}{\widetilde{X}}
\newcommand{\C}{{\calc}}
\newcommand{\rank}{{\rm rank}\, }
\newcommand{\ix}{\index}
\def\blfootnote{\xdef\@thefnmark{}\@footnotetext}
\newcommand{\frakv}{\mathfrak{v}}
\newcommand{\bt}{{\bf t}}
\newcommand{\bH}{{\mathbb H}}
\newcommand{\ocalj}{\overline{{\mathcal V}}}
\newcommand{\cali}{{\mathcal I}}
\newcommand{\calj}{{\mathcal V}}
\newcommand{\calF}{{\mathcal F}}
\newcommand{\setQ}{\mathbb{Q}}
\newcommand{\hh}{\mathfrak{h}}
\newcommand{\RR}{\mathfrak{R}}
\newcommand{\calv}{\mathcal{V}}
\newcommand{\sw}{{\mathfrak{sw}}}
\newcommand{\Z}{\mathbb{Z}}
\newcommand{\Q}{\mathbb{Q}}
\newcommand{\R}{\mathbb{R}}
\gdef\SetFigFontNFSS#1#2#3#4#5{%
  \reset@font\fontsize{#1}{#2pt}%
  \fontfamily{#3}\fontseries{#4}\fontshape{#5}%
  \selectfont}%
\newcommand{\chic}{\mathfrak{r}}
\newcommand{\bS}{{\mathbb S}}
\newcommand{\calt}{{\mathcal T}}
\def\C{\mathbb C}
\def\Q{\mathbb Q}
\def\R{\mathbb R}
\def\bH{\mathbb H}
\def\Z{\mathbb Z}
\def\N{\mathbb N}
\newcommand{\cale}{{\mathcal E}}
\author{Tam\'as \'Agoston}
\address{Alfr\'ed R\'enyi Institute of Mathematics,
Re\'altanoda utca 13-15, H-1053, Budapest, Hungary }
\email{agoston.tamas@renyi.hu}
\author{Andr\'as N\'emethi}
\address{Alfr\'ed R\'enyi Institute of Mathematics,
Re\'altanoda utca 13-15, H-1053, Budapest, Hungary \newline
 \hspace*{4mm} ELTE - University of Budapest, Dept. of Geometry, Budapest, Hungary \newline \hspace*{4mm}
BCAM - Basque Center for Applied Math.,
Mazarredo, 14 E48009 Bilbao, Basque Country – Spain}
\email{nemethi.andras@renyi.hu }
\title{Analytic lattice cohomology of surface singularities
}
\begin{document}

\keywords{normal surface singularity,
resolution  graph, rational homology sphere, Hilbert series,
Laufer duality, lattice cohomology, cohomology of line bundles, graded roots, deformation of singularities}
\subjclass[2010]{Primary. 32S05, 32S25, 32S50, 57M27
Secondary. 14Bxx, 14J80}
\thanks{The  authors are partially supported by NKFIH Grant ``\'Elvonal (Frontier)'' KKP 126683.}

\begin{abstract}
We construct the analytic lattice cohomology associated with the analytic type of any complex normal surface singularity.
 It is the categorification of the geometric genus of the  germ, whenever the link is a rational homology sphere.

It is the analytic analogue of the topological lattice cohomology, associated with the link of the germ whenever it is a rational homology sphere. This topological lattice cohomology  is the
 categorification of  the Seiberg--Witten invariant, and
conjecturally it is  isomorphic with the Heegaard Floer cohomology.

We compare the two lattice cohomologies: in some simple cases they coincide, but in general, the analytic cohomology is sensitive to the analytic structure.
We expect a deep connection with deformation theory.
We provide several basic  properties  and  key examples, and we formulate several conjectures
and problems.

This is the initial  article of a series, in which we develop the analytic lattice cohomology of singularities.
\end{abstract}

\maketitle


\date{}

\pagestyle{myheadings} \markboth{{\normalsize  T. \'Agoston , A. N\'emethi}} {{\normalsize Analytic lattice cohomology }}


\section{Introduction}\label{s:intr}

\bekezdes
It is an exciting project to compare the topological and  analytic invariants of a complex normal surface singularity
$(X,o)$.
The topological invariants are characterized by the link $M$
(an oriented closed 3--manifold), or by a dual resolution graph $\Gamma$
associated with a resolution $\tX$ of $(X,o)$. The analytic invariants usually are associated with the local ring,
e.g. by the structure of its ideals, or by a certain (multi-graded) filtration of it,
 or they are defined using
different coherent sheaves of
$(X,o)$ or of  the resolution $\tX$.  The topological invariants are discrete, and usually (in principle) can be computed combinatorially  from
$\Gamma$ (or, directly  from the fundamental group of the link, or by certain topological constructions of $M$).
However, the analytic invariants might reflect essentially the variation of the analytic structure, they might even have
moduli. Their computation and identification is usually hard, even if they are discrete.

Therefore, usually, as a first step, we try to find topological bounds  for them, or
redescribe  them
topologically
whenever the analytic structure is `nice', special,  (or, in the opposite case, if it is generic). In this procedure,
for several analytic invariants one tries to find some topological candidate,
which it  is equal to for special analytic structures. But it can also happen, conversely,  that for a known
topological object we wish to find its analytic counterpart or its analytic refinement.

In this way, we get `pairs' of invariants. The main  advantage of these pairs is not just that they coincide for
certain analytic structures, but  (if they are really `matching  pairs') they also behave rather similarly in their own categories
(e.g. they satisfy similar types of identities, they behave  similarly with respect to
 surgeries or  certain geometrical constructions, see a few examples below).
For such pairs and  their interactions the reader might consult \cite{Pairs}.

Let us mention some of them  (for more details  see also  \cite{Nfive,NJEMS,Adv,NGr,Pairs,Book}).
We fix a good resolution $\tX \to X$ (and in some of the cases below  we need to  assume that the link is a rational homology sphere).

\vspace{4mm}

\begin{tabular}{|l|l|} \hline
topological & analytic \\
\hline
Artin's fundamental  cycle $Z_{min}$ & maximal ideal cycle $Z_{max}$\\
integral Lipman's monoid (cone) $\calS_{top}$ & monoid $\calS_{an}$ of divisors of function \\
multivariable topological Poincar\'e series $Z(\bt)$ & multivariable  Poincar\'e series $P(\bt)$ of divisorial filtration\\
the canonical Seiberg-Witten invariant of $M$ & the geometric genus $p_g=h^1(\calO_{\tX})$\\
all the Seiberg--Witten invariants of $M$ & the equivariant geometric genera\\
topological lattice cohomology $\bH^*_{top}$& \hspace{2cm}????\\
topological graded root & \hspace{2cm}????\\
\hline\end{tabular}

\vspace{3mm}

Let us list also some matching pairs of statements regarding the above objects.

$\bullet$ $Z_{min}$ is the minimal nonzero element of $\calS_{top}$,  $Z_{max}$ is the minimal nonzero element of $\calS_{an}$.

$\bullet$ $Z(\bt)$ is supported on $\calS_{top}$, $P(\bt)$ is supported on $\calS_{an}$.

$\bullet$
The additivity formula of Okuma for $p_g$ \cite{Opg} is the analogue of the additivity formula
from \cite{BN} valid for the Seiberg--Witten invariant (in the first case the correction term is the periodic constant
of a reduced series associated with $Z(\bt)$, while in the analytic case this $Z(\bt)$ is replaced by $P(\bt)$).

$\bullet$ The multivariable periodic constant of $Z(\bt)$ is the Seiberg--Witten invariant of the link,
the multivariable periodic constant of $P(\bt)$ is the geometric genus of $(X,o)$.

\vspace{1mm}

Let us mention also that on the topological side the theory and list of properties of the lattice cohomology and graded roots
run interweaved: the graded root is an `improvement' of 
$\bH^0_{top}$. They were defined in \cite{NOSz,Nlattice,NGr} (from the topology of the link, whenever the link is a rational homology sphere, see below).

\bekezdes
The main motivation for the present article is the fact that in the literature there is no analytic analogue
of the topological lattice cohomology $\bH^*_{top}$, though $\bH^*_{top}$
 is deeply connected to  several  other topological invariants that  have well--defined matching pairs.
E.g., $\bH^*_{top}$ is defined as the cohomology of weighted cubes, and a particular sum of weights of cubes
provides $Z(\bt)$ (see \ref{rem:P0AN} here) --- \ and $Z(\bt)$ admits $P(\bt)$ as matching pair.
Or, the Euler characteristic of the cohomology theory
$\bH^*_{top}=\oplus_{q\geq 0}\bH^q_{top}$  is the Seiberg--Witten invariant (i.e. $\bH^*$ is a categorification of the Seiberg--Witten invariant, see Theorem \ref{th:ECharLC}) --- \  and the Seiberg--Witten invariant is matched by the (equivariant) geometric genus.

 Hence, it is rather natural to ask: {\it is there any natural cohomology theory
(in this case, a graded $\Z[U]$--module, just like  $\bH^*_{top}$ is), associated with the analytic structure $(X,o)$,
 which is the categorification of the geometric genus $p_g$ ? Is there an analogue to the topological graded root?
If the answer is yes, is it able to effectively reflect the change  of the analytic structures (on a fixed topological type)?}

The aim of this article is to give positive answers to these questions.
\bekezdes  Let us assume that the link is a rational homology sphere.
Then the topological lattice cohomology of the link $M$,  $\bH^*_{top}(M)$, is well--defined \cite{Nlattice}.
It  has a  rather different structure than any cohomology
theory associated with  analytic spaces by complex analytic or algebraic geometry (e.g. like  the various  sheaf--cohomologies).
It  has several  gradings: first of all, it has a direct sum decomposition according to the spin$^c$--structures $\sigma$
of $M$ (${\rm Spin}^c(M)$  is an $H_1(M,\Z)$ torsor).  Then each summand $\bH^*_{top}(M,\sigma)$ has a decomposition
$\oplus _{q\geq 0}\bH^q(M,\sigma)$, where each $\bH^q_{top}(M,\sigma)$ is a $\Z$--graded $\Z[U]$--module.
Probably the presence of this  additional $U$--action is the most outstanding property compared with the usual cohomology theories.

Conjecturally (see \cite{Nlattice})
$\bH^*_{top}(M)$  is isomorphic to the Heegaard Floer cohomology $HF^+$ of Ozsv\'ath and Szab\'o  (which is defined for any 3--manifold),
for  $HF$--theory see their
 long list of articles, e.g.
\cite{OSz,OSz7}. This conjecture was verified for several families of plumbed 3--manifolds (associated with
negative definite connected graphs),  cf. \cite{NOSz,OSSz3},
but the general case is still open.
(In fact,   Heegaard Floer theory is isomorphic with several other theories:
the Monopole Floer Homology of Kronheimer  and Mrowka, or  the Embedded Contact Homology of Hutchings.
They are based on different geometrical  aspects of the 3--manifold $M$.)
$\bH^*_{top}$  is the categorification of the Seiberg--Witten invariant (similarly as $HF^+$ is).
For several properties of the lattice cohomology and applications in singularity theory see \cite{NOSz,NSurgd,NGr,Nexseq,NeLO}. For its connection with the classification of
projective rational plane cuspidal curves (via superisolated surface  singularities) see \cite{NSurgd,BLMN2,BodNem,BodNem2,BCG,BL1}.
 It  provides sharp topological bounds for certain sheaf cohomologies (e.g. for $p_g$), see e.g.  \cite{NSig,NSigNN}.
An improvement of $\bH^0_{top}$ is the set of graded roots parametrized by the spin$^c$--structures of $M$ \cite{NOSz,NGr}
 (they have no analogues for general arbitrary 3--manifolds). The  graded root is a special
tree with  $\Z$--graded vertices, it  provides
a very visual presentation of  $\bH^0_{top}$ (e.g., the $U$--action is coded in the edges).
 Hence, in particular it visualizes   $HF^+$ too,
when the  Heegaard Floer homology is known to be isomorphic to $\bH^0_{top}$ (see e.g. \cite{NOSz}). In such cases
the use of   graded roots  is significantly more convenient than any other method, see e.g.
 \cite{DM,K1,K2,K3}.

\bekezdes  Our goal is to define the corresponding analytic lattice cohomology associated with the analytic type of a normal surface singularity. The definition
and the development of the first key properties
(some of them under some topological  assumptions, e.g  that the link is a rational homology sphere)
are presented in this article.
In fact, in the present note
 we  present the analytic lattice cohomology  associated with the {\it canonical spin$^c$--structure $\sigma_{can}$} only.
It will be denoted by $\bH^*_{an,0}(X,o)$.
 The case of  other spin$^c$--structures will be treated in \cite{AgNeIII}.
(For the general part,
 under the assumption that the link is a rational homology sphere,
 we need to generalize  the present constructions to the level  of the universal abelian covering of $(X,o)$ and we also to recall several technical
details of  the theory of  the `natural line bundles'  of a
resolution. Therefore,  we decided to separate these technical parts  into \cite{AgNeIII}.)
Accordingly, in the present note, in the presentation of the topological lattice cohomology  we
restrict ourself  only to  the  module (summand)
which corresponds
to $\sigma_{can}$.

In this article, under the  assumption that all the exceptional curves of a resolution are rational, we show that
the analytic lattice cohomology $\bH^*_{an,0}$ just constructed  is a categorification of the
geometric genus. (This means that the Euler characteristic of the cohomology is the geometric genus.)
 We also show that it admits a graded $\Z[U]$--module morphism $\bH^*_{an}(X,o)\to \bH^*_{top}(M, \sigma_{can})$ (which in some
`nice' cases is an isomorphism). Furthermore, through   several examples we show that it
 is sensitive to the change of the analytic structure. In fact, for several fixed topological types we even  classify the
possible graded $\Z[U]$--modules $\bH^*_{an,0}$ associated with all the possible analytic structures supported on that topological type.

We are certain  that the new theory will have similar power and applicability as the topological version
(or, as the $HF$--theory),
with the difference that  in this case
its  applications will be mostly in the analytic theory of singularities.
 E.g. (as several examples
show already  in this article, see also \cite{AgNeCurves}) it is deeply related with the deformation of singularities.

\bekezdes In order to define a lattice cohomology one needs a  free $\Z$--module with a fixed basis and a weight
function (with certain properties) defined on the lattice points. In our case the lattice is that  of
the  divisors supported on the exceptional curve of a resolution. In the topological case the weight function is provided by the
Riemann--Roch expression. In the analytic case it is a combination of the coefficients of the Hilbert function associated with the divisorial filtration and the dimension  of a (sheaf)
cohomology  vectorspace. In both cases one proves that the output lattice cohomology is independent of the choice of the resolution.

In the Gorenstein case (in the presence of a certain symmetry) the analytic weight function can be deduced merely from the
coefficients of the Hilbert function of the divisorial filtration.

An important observation is that the general construction/definition  of the lattice cohomology  is very flexible: by providing different weight functions one
obtains different lattice cohomologies, and indeed, there are  several possibilities to construct weight functions
with remarkable associated cohomologies. This is
suggested already in the present note  by considering different filtrations (e.g. Newton filtration) and the associated weight functions.

The construction can be generalized  to higher dimensional complex normal isolated singularities and also
to  curve singularities (here the divisorial filtration is replaced by the valuative one), cf. \cite{AgNeCurves,AgNeHigh}.
In  the higher dimensional
case it is the categorification of $h^{n-1}(\calO_{\tX})$
 (as in the surface case), while in the curve case it is the categorification of the delta--invariant.

\bekezdes  The structure of the article and some of the main results are the following:
Section \ref{s:Prem1} contains the general definition of the lattice cohomology and graded root
 associated with a weight function, we follow \cite{NOSz,NGr,Nlattice}. Here we also recall certain  basics regarding
the topological lattice cohomology, the key Reduction Theorem \cite{LN1}
(when we can reduce the rank of the lattice according to
`bad' vertices), and several needed technical statements regarding the case of `almost rational graphs'
\cite{NOSz}.
Section \ref{ss:NSSAnlattice} (after some analytic preliminaries, vanishing theorems, etc.)  provides the definition of the
analytic lattice cohomology associated with a normal surface singularity via one of its resolutions. We prove
that it is independent of the resolution, and  when 
the link is a rational homology sphere we prove that
 it  is the categorification of
the geometric genus. We also prove an Analytic Reduction Theorem which allows us to compute it in
various  cases,
e.g. for rational, weighted homogeneous, Gorenstein elliptic, and certain superisolated  germs
(the special properties of almost rational  graphs are used in this last case).
We also determine it for the generic analytic structure. On the other hand, by several examples we show how it indicates the
variation of the analytic structure. In \ref{bek:DEFAN} we connect it (by a Conjecture) with $p_g$--constant flat deformations.

Section \ref{ss:CombLattice} contains a combinatorial setup of the theory: several proofs which depend basically only on these
combinatorial properties  are separated here. This part can and will be applied to several other weight functions in the forthcoming  articles of the series.

In the body of the article we also
present several examples and problems/conjectures regarding the new theory.

\section{Preliminaries. Basic properties of lattice cohomology}\label{s:Prem1}

\subsection{The lattice cohomology associated with  a weight function}\label{ss:latweight} \cite{NOSz,Nlattice}

\bekezdes
 We consider a free $\Z$-module, with a fixed basis
$\{E_v\}_{v\in\calv}$, denoted by $\Z^s$, $s:=|\calv|$.
Additionally, we consider a {\it weight  function} $w_0:\Z^s\to \Z$ with the property
\begin{equation}\label{9weight}
\mbox{for any integer $n\in\Z$, the set $w_0^{-1}(\,(-\infty,n]\,)$
is finite.}\end{equation}
%
%

\bekezdes\label{9complex} {\bf The weighted cubes.}
The space
$\Z^s\otimes \R$ has a natural cellular decomposition into cubes. The
set of zero-dimensional cubes is provided  by the lattice points
$\Z^s$. Any $l\in \Z^s$ and subset $I\subset \calv$ of
cardinality $q$  defines a $q$-dimensional cube $\square_q=(l, I)$, which has its
vertices in the lattice points $(l+\sum_{v\in I'}E_v)_{I'}$, where
$I'$ runs over all subsets of $I$.
 The set of $q$-dimensional cubes  is denoted by $\calQ_q$ ($0\leq q\leq s$).

Using $w_0$ we define
$w_q:\calQ_q\to \Z$  ($0\leq q\leq s$) by
$w_q(\square_q):=\max\{w_0(l)\,:\, \mbox{$l$ is a vertex of $\square_q$}\}$.
(For a more general definition of a `system of weight functions' when the system $w_q(\square_q)$ is not
determined by $w_0$, see \cite{Nlattice}, that generality
will be not used here.)

For each $n\in \Z$ we
define $S_n=S_n(w)\subset \R^s$ as the union of all
the cubes $\square_q$ (of any dimension) with $w(\square_q)\leq
n$. Clearly, $S_n=\emptyset$, whenever $n<m_w:=\min\{w_0\}$. For any  $q\geq 0$, set
$$\bH^q(\R^s,w):=\oplus_{n\geq m_w}\, H^q(S_n,\Z)\ \ \mbox{and}\ \
\bH^q_{red}(\R^s,w):=\oplus_{n\geq m_w}\, \widetilde{H}^q(S_n,\Z).$$
Then $\bH^q$ is $\Z$ (in fact, $2\Z$)-graded, the
$d=2n$-homogeneous elements 
consist of  $H^q(S_n,\Z)$.
Also, $\bH^q$ is a $\Z[U]$-module; the $U$-action is given by
the restriction map $r_{n+1}:H^q(S_{n+1},\Z)\to H^q(S_n,\Z)$.
Namely,  $U*(\alpha_n)_n=(r_{n+1}\alpha_{n+1})_n$. The same is true for $\bH^*_{red}$.
 Moreover, for
$q=0$, there exists  an augmentation
(splitting)
 $H^0(S_n,\Z)\simeq
\Z\oplus \widetilde{H}^0(S_n,\Z)$, hence an augmentation of the graded
$\Z[U]$-modules
$$\bH^0\simeq\calt^+_{2m_w}\oplus \bH^0_{red}=(\oplus_{n\geq m_w}\Z)\oplus (
\oplus_{n\geq m_w}\widetilde{H}^0(S_n,\Z))\ \ \mbox{and} \ \
\bH^*\simeq\calt^+_{2m_w}\oplus \bH^*_{red},$$
where $\calt_{2m}^+$  equals
$\Z\langle U^{-m}, U^{-m-1},\ldots\rangle$ as a $\Z$-module and it has the natural $U$--multiplication.

Though
$\bH^*_{red}(\R^s,w)$ has finite $\Z$-rank in any fixed
homogeneous degree, in general,  without certain additional properties of $w_0$, it is not
finitely generated over $\Z$, in fact, not even over $\Z[U]$.

\bekezdes\label{9SSP} {\bf Restrictions.} Assume that $T\subset \R^s$ is a subspace
of $\R^s$ consisting of a union of some cubes (from $\calQ_*$). For any $q\geq 0$ define $\bH^q(T,w)$ as
$\oplus_{n\geq\min{w_0|T}} H^q(S_n\cap T,\Z)$. It has a natural graded $\Z[U]$-module
structure.  The restriction map induces a natural  graded (degree zero)
$\Z[U]$-module  homomorphism
$r^*:\bH^*(\R^s,w)\to \bH^*(T,w)$.
In our applications to follow, $T$ --- besides the trivial $T=\R^s$ case --- will be one of the following:
%
(a) the first quadrant $(\R_{\geq o})^s$,
(b) the rectangle $[0,c]=\{x\in \R^s\,:\, 0\leq x\leq c\}$ for some lattice point $c\geq 0$,
(c) a path of composed edges in the lattice, cf. \ref{9PCl} and \ref{bek:path}.

\bekezdes \label{9F} {\bf The `Euler characteristic' of $\bH^*$.}
Fix $T$ as in  \ref{9SSP} and we will assume that each $\bH^*_{red}(T,w)$ has finite $\Z$--rank.
%
The Euler characteristic of $\bH^*(T,w)$ is defined as
$$eu(\bH^*(T,w)):=-\min\{w(l)\,:\, l\in T\cap \Z^s\} +
\sum_q(-1)^q\rank_\Z(\bH^q_{red}(T,w)).$$

\begin{lemma}\cite{NJEMS}\label{bek:LCSW} If $T=[0,c]$ for a lattice point $c\geq 0$, then
\begin{equation}\label{eq:Ecal}
 \sum_{\square_q\subset T} (-1)^{q+1}w_k(\square_q)=eu(\bH^*(T,w)).\end{equation}
 \end{lemma}

\subsection{Path lattice cohomology}\label{9PCl}\cite{Nlattice}

\bekezdes \label{bek:pathlatticecoh}
Fix $\Z^s$  as in \ref{ss:latweight} and fix also the weight functions
 $\{w_q\}_q$   as in \ref{9weight}. 
 Consider also a sequence $\gamma:=\{x_i\}_{i=0}^t$  so that $x_0=0$,
$x_i\not=x_j$ for $i\not=j$, and $x_{i+1}=x_i\pm E_{v(i)}$ for
$0\leq i<t$. We write $T$ for the
union of 0-cubes marked by the points $\{x_i\}_i$ and of the
segments of type  $[x_i,x_{i+1}]$.
Then, by \ref{9SSP} we get a graded $\Z[U]$-module $\bH^*(T,w)$,
which is called the {\em
path lattice cohomology} associated with the `path' $\gamma$ and weights
$\{w_q\}_{q=0,1}$. It is denoted by $\bH^*(\gamma,w)$.
It has an augmentation with $\calt^+_{2m_\gamma}$,
where $m_\gamma:=\min_i\{w_0(x_i)\}$, hence  one also gets the {\em reduced path lattice
cohomology} $\bH^0_{red}(\gamma,w)$ with
$$\bH^0(\gamma,w)\simeq \calt_{2m_\gamma}^+\oplus
\bH^0_{red}(\gamma,w).$$
It turns out that  $\bH^q(\gamma,w)=0$ for $q\geq 1$ and  its `Euler characteristic' can be defined as  (cf.  \ref{9F})
\begin{equation}\label{eq:euh0}
eu(\bH^*(\gamma,w)):=-m_\gamma+\rank_\Z\,(\bH^0_{red}(\gamma,w)).\end{equation}

\begin{lemma} \label{9PC2}
One has the following expression of $eu(\bH^*(\gamma,w))$ in terms of the values of $w_0$:
\begin{equation}\label{eq:pathweights}
eu(\bH^*(\gamma,w))=-w_0(0)+\sum_{i=0}^{t-1}\,
\max\{0, w_0(x_{i})-w_0(x_{i+1})\}.
\end{equation}
\end{lemma}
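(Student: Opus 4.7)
The plan is to exploit the fact that, because $x_i\neq x_j$ for $i\neq j$, the underlying subcomplex $T\subset\R^s$ of the path is a simple $1$--dimensional tree (no repeated vertices, hence no repeated edges). Every sublevel set $S_n\cap T$ is then a subgraph of this tree, thus a forest, so $H^q(S_n\cap T,\Z)=0$ for all $q\ge 1$. This gives the asserted vanishing $\bH^q(\gamma,w)=0$ for $q\ge 1$ and reduces the computation of $eu(\bH^*(\gamma,w))$ to determining $\rank_\Z\bH^0_{red}(\gamma,w)$.

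Since $S_n\cap T$ is a forest, its number of connected components equals $V_n-E_n$, where
\[
V_n=\#\{i:0\le i\le t,\ w_0(x_i)\le n\},\quad E_n=\#\{i:0\le i\le t-1,\ \max\{w_0(x_i),w_0(x_{i+1})\}\le n\};
\]
here I use that the edge $[x_i,x_{i+1}]$ lies in $S_n\cap T$ iff both of its endpoints do, which is precisely the definition of $w_1$ on the corresponding $1$--cube. Thus $\rank\widetilde{H}^0(S_n\cap T,\Z)=V_n-E_n-1$ for $n\ge m_\gamma$, and this quantity vanishes as soon as $n\ge\max_i w_0(x_i)$. Interchanging the order of summation (each $x_i$ contributes $1$ to $V_n$ exactly for $n\ge w_0(x_i)$, and analogously for each segment) yields
\[
\rank_\Z\bH^0_{red}(\gamma,w)=\sum_{n\ge m_\gamma}(V_n-E_n-1)=m_\gamma-\sum_{i=0}^{t}w_0(x_i)+\sum_{i=0}^{t-1}\max\{w_0(x_i),w_0(x_{i+1})\},
\]
the divergent tails of $\sum V_n$, $\sum E_n$ and $\sum 1$ cancelling against one another.

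Plugging this into $eu(\bH^*(\gamma,w))=-m_\gamma+\rank_\Z\bH^0_{red}(\gamma,w)$ cancels $m_\gamma$, and I would finish by applying the elementary identity $\max\{a,b\}=b+\max\{0,a-b\}$ term-by-term with $(a,b)=(w_0(x_i),w_0(x_{i+1}))$ to obtain
\[
-\sum_{i=0}^{t}w_0(x_i)+\sum_{i=0}^{t-1}\max\{w_0(x_i),w_0(x_{i+1})\}=-w_0(x_0)+\sum_{i=0}^{t-1}\max\{0,w_0(x_i)-w_0(x_{i+1})\},
\]
which is exactly the claimed formula since $x_0=0$.

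The main (and rather minor) obstacle is the bookkeeping of the tail-cancellation in the summation of $V_n-E_n-1$; cleanly one truncates at some $N\ge \max_i w_0(x_i)$ and verifies that the explicit $N$-dependence drops out upon subtracting. An equally clean alternative is induction on $t$: extending the path by one segment $[x_{t-1},x_t]$ and checking in each of the cases $w_0(x_t)\ge w_0(x_{t-1})$, $m_{\gamma'}\le w_0(x_t)< w_0(x_{t-1})$, and $w_0(x_t)<m_{\gamma'}$ that the change in $eu$ equals exactly $\max\{0,w_0(x_{t-1})-w_0(x_t)\}$, matching the new summand added to the right-hand side.
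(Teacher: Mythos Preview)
Your proof is correct. The paper itself does not prove this lemma; it is stated with a citation to \cite{Nlattice}, so there is no ``paper's own proof'' to compare against directly.

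Your argument is the natural one: since the $x_i$ are pairwise distinct, $T$ is a tree on $t+1$ vertices with $t$ edges, every sublevel set $S_n\cap T$ is a forest, and hence $\rank\widetilde H^0(S_n\cap T,\Z)=V_n-E_n-1$ while all higher cohomology vanishes. The truncation-and-cancellation step is fine as written (your $(t+1)$ vertex terms, $t$ edge terms, and the single $-1$ combine to kill the $N$-dependence exactly). The final telescoping via $\max\{a,b\}=b+\max\{0,a-b\}$ is routine.

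It is worth noting that your computation is essentially the $1$-dimensional specialization of the weighted-cube identity of Lemma~\ref{bek:LCSW}: for the path one has
\[
eu(\bH^*(\gamma,w))=\sum_{\square_q\subset T}(-1)^{q+1}w_q(\square_q)=-\sum_{i=0}^{t}w_0(x_i)+\sum_{i=0}^{t-1}\max\{w_0(x_i),w_0(x_{i+1})\},
\]
after which your $\max$-identity finishes. Although Lemma~\ref{bek:LCSW} is stated in the paper only for rectangles $[0,c]$, its proof works verbatim for any finite cubical subcomplex, in particular for a path; so one could also have simply invoked that. Your self-contained derivation is a bit more explicit but arrives at the same intermediate expression.
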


\begin{remark} It is convenient to
 compare $\bH^*(T,w)$ and $\bH^0(\gamma,w)$ for certain (big) rectangles  $T$, or for any $T$. In such cases it is convenient to
 consider the following `truncated Euler characteristic' of $\bH^*(T,w)$ as well (as an analogue of (\ref{eq:euh0}),
 even if $\bH^{\geq 1}(T,w)\not=0$):
 $$eu(\bH^0(T,w)):=-\min\{w(l)\,:\, l\in T\cap \Z^s\} +
\rank_\Z(\bH^0_{red}(T,w)).$$
\end{remark}

\subsection{Graded roots and their cohomologies}\label{s:grgen} \cite{NOSz,NGr}

\begin{definition}\label{def:2.1} \
  Let $\RR$ be an infinite tree with vertices $\calv$ and edges
$\cale$. We denote by $[u,v]$ the edge with
 end-vertices  $u$ and $v$.  We say that $\RR$ is a {\em graded root}
with grading $\chic:\calv\to \Z$ if

(a) $\chic(u)-\chic(v)=\pm 1$ for any $[u,v]\in \cale$;

(b) $\chic(u)>\min\{\chic(v),\chic(w)\}$ for any $[u,v],\
[u,w]\in\cale$, $v\neq w$;

(c) $\chic$ is bounded below, $\chic^{-1}(n)$ is finite for any
$n\in\Z$, and $|\chic^{-1}(n)|=1$ if $n\gg 0$.

%

\vspace{1mm}

\noindent
An isomorphism of graded roots is a graph isomorphism, which preserves the gradings.
\end{definition}


\begin{examples}\label{ex:2.3e} \

(1) For any  $n\in\Z$, let
$\RR_{(n)}$ be the tree with $\calv=\{v^{k} \}_{ k\geq n}$ and
$\cale=\{[v^{k},v^{k+1}]\}_{k\geq n}$. The grading is
$\chic (v^{k})=k$.

\vspace{2mm}

(2) Let $I$ be a finite index set. For each $i\in I$ fix  an
integer $n_i\in \Z$; and for each pair $i,j\in I$ fix
$n_{ij}=n_{ji}\in\Z$ with the next properties:

(i) $n_{ii}=n_i$; \
(ii) $n_{ij}\geq \max\{n_i,n_j\}$; and
\
(iii) $n_{jk}\leq \max\{n_{ij},n_{ik}\}$ for any $ i,j,k\in I$.

For any $i\in I$ consider $\RR_i:=\RR_{(n_i)}$ with vertices
$\{v_i^{k}\}$ and edges $\{[v_i^{k},v_i^{k+1}]\}$, $(k\geq n_i)$.
In the disjoint union $\sqcup_i\RR_i$,  for any pair $(i,j)$,
 identify $v_i^{k}$ and $v_j^{k}$,
resp.\ $[v_i^{k},v_i^{k+1}]$  and $[v_j^{k},v_j^{k+1}]$, whenever
$k\geq n_{ij}$. Write $\bar{v}_i^{k}$ for the class of $v_i^k$.
Then $\sqcup_i\RR_i/_\sim$ is a graded root with
$\chic(\bar{v}_i^{k})=k$. It will be denoted by
$\RR=\RR(\{n_i\},\{n_{ij}\})$.

%

\vspace{2mm}

(3) Any map $\tau:\{0, 1,\ldots,T_0\}\to \Z$ produces a starting
data for construction (2). Indeed, set $I=\{0,\ldots,T_0\}$,
$n_i:=\tau(i)$ ($i\in I$), and $n_{ij}:=\max\{n_k\,:\, i\leq k\leq
j\}$ for $i\leq j$. Then  $\sqcup_i\RR_i/_\sim $ constructed in (2)
using this data will be denoted by $(\RR_\tau,\chic_\tau)$.

This construction can be extended to the case of a map $\tau:\N\to \bZ$, whenever
$\tau$  has the property that there exists some $k_0\geq 0$ such that $\tau(k+1)\geq \tau(k)$ for any $k\geq k_0$.
In this case one can take any $T_0\geq k_0$ and construct the root associated with the
restriction of $\tau$ to $\{0,\ldots, T_0\}$.
It is independent of the choice of $T_0$ (since all
$\RR_k$ contributions for $k\geq k_0$ are superfluous). By definition, this is the root associated with $\tau$.
\end{examples}

\begin{definition}\label{9.2.6}{\bf The
  $\Z[U]$-modules associated with a graded root.}
Let us identify a graded root $(\RR,\chic)$ with its topological realization provided by vertices (0--cubes)
and segments (1--cubes). Define $w_0(v)=\chic(v)$, and $w_1([u,v])=\max\{\chic(u),\chic(v)\}$ and let $S_n$ be the
union of all cubes with weight $\leq n$. Then  we might set (as above) $\bH^*(\RR,\chi)=\oplus_{n\geq \min\chic}\
H^*(S_n,\Z)$. However, at this time $\bH^{\geq 1}(\RR,\chic)=0$; we set $\bH(\RR,\chic):=\bH^0(\RR,\chic)$.
Similarly, one defines $\bH_{red}(\RR,\chic)$ using the reduced cohomology, hence
$\bH(\RR,\chic)\simeq\calt_{2\min \chic}^+\oplus \bH_{red}(\RR,\chic)$.
\end{definition}

For a detailed concrete description of $\bH(\RR)$ in terms of the combinatorics of the root see \cite{NOSz}.

\begin{example}\label{9.2.9} (a) $\bH(\RR_n)=\calt_{2n}^+$.

(b) Let $(\RR_\tau,\chic_\tau)$ be
a graded root associated with  some  function $\tau:\N\to \Z$,
cf.\ \ref{ex:2.3e}(3).  Then
$$\rank_\Z \bH_{red}(\RR_\tau,\chic_\tau)=-\tau(0)+\min_{i\geq 0}\tau(i)+\sum_{i\geq 0}\,
\max\{ \tau(i)-\tau(i+1),0\}.$$
\end{example}

\bekezdes\label{bek:GRootW}{\bf The graded root associated with a weight function.}
Fix a free $\Z$-module and a system of weights $\{w_q\}_q$.
Consider the sequence of  topological  spaces (finite cubical  complexes) $\{S_n\}_{n\geq m_w}$
with $S_n\subset S_{n+1}$, cf. \ref{9complex}.
Let $\pi_0(S_n)=\{\calC_n^1,\ldots , \calC_n^{p_n}\}$ be the set of connected components of $S_n$.

Then  we define the  graded graph  $(\RR_w,\chic_w)$ as follows. The
vertex set  $\calv(\RR_w)$ is $\cup_{n\in \Z} \pi_0(S_n)$.
The grading $\chic_w:\calv(\RR_w)\to\Z$ is
$\chic_w(\calC_n^j)=n$, that is, $\chic_w|_{\pi_0(S_n)}=n$.
Furthermore, if  $\calC_{n}^i\subset \calC_{n+1}^j$ for some $n$, $i$ and $j$,
then we introduce an edge $[\calC_n^i,\calC_{n+1}^j]$. All the edges
 of $\RR_w$ are obtained in this way.
\begin{lemma}\label{lem:GRoot} $(\RR_w,\chic_w)$ satisfies all the required properties
of the definition of a graded root, except possibly  the last one: $|\chic_w^{-1}(n)|=1$ whenever $n\gg 0$.
\end{lemma}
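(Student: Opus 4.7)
The plan is to verify each condition of Definition~\ref{def:2.1} except the final uniqueness requirement $|\chic_w^{-1}(n)|=1$ for $n\gg 0$.

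Condition (a) is immediate from the construction: any edge of $\RR_w$ is introduced between a vertex in $\pi_0(S_n)$ and a vertex in $\pi_0(S_{n+1})$ for some $n$, so $\chic_w$ differs by exactly $1$ on such an edge. Boundedness of $\chic_w$ from below follows from $S_n=\emptyset$ for $n<m_w$, which forces $\chic_w\geq m_w$. Finiteness of each fibre $\chic_w^{-1}(n)=\pi_0(S_n)$ is a consequence of hypothesis~(\ref{9weight}): since $w_0^{-1}((-\infty,n])$ is finite, $S_n$ is a finite cubical complex and thus has only finitely many connected components.

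The remaining properties---the tree property and condition (b)---both rest on a single observation: every vertex of $\RR_w$ is incident to exactly one edge going upward. Indeed, for any $\calC_n^i\in\pi_0(S_n)$ the inclusion $S_n\subset S_{n+1}$ forces $\calC_n^i$ to be contained in a unique connected component $\calC_{n+1}^j$ of $S_{n+1}$, producing the unique edge at $\calC_n^i$ whose other endpoint lies at grading $n+1$. Given this, condition (b) is immediate: if $[u,v]$ and $[u,w]$ are two distinct edges at $u$, then their other endpoints cannot both have grading $\chic_w(u)+1$, so $\min\{\chic_w(v),\chic_w(w)\}\leq\chic_w(u)-1<\chic_w(u)$.

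It remains to show that each connected component of $\RR_w$ is a tree. Any hypothetical cycle would alternate upward and downward edges (since all edges connect consecutive gradings), and at a vertex achieving the minimum grading along the cycle both cycle-edges would be upward, contradicting the uniqueness just established. The same observation also explains why the last condition may genuinely fail: if the upward rays starting at different vertices never merge, one obtains several disjoint infinite branches, hence several connected components, so the structure produced is in general a disjoint union of graded roots rather than a single one. The whole argument rests on the unique upward edge at each vertex, which is essentially the tautology that a connected subset is contained in a unique connected component of any ambient space; once this is in place, the rest is bookkeeping, and the only genuine subtlety is precisely the possibility that $\RR_w$ may be disconnected, which is the content of the last condition that the statement permits us to leave aside.
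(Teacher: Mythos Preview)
Your verification of conditions (a), (b), boundedness below, finiteness of fibres, and acyclicity is correct. However, you have not verified that $\RR_w$ is connected, and being an \emph{infinite tree} (hence connected) is part of Definition~\ref{def:2.1}. Since the lemma excepts only the final clause $|\chic_w^{-1}(n)|=1$ for $n\gg 0$, connectedness is among the properties you must establish.

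In fact connectedness does hold under hypothesis~(\ref{9weight}), and is not equivalent to the final clause as your last paragraph suggests. Given $\calC\in\pi_0(S_m)$ and $\calC'\in\pi_0(S_{m'})$, choose lattice points $p\in\calC$, $p'\in\calC'$ and any path of $1$-cubes in $\R^s$ joining them; this path has finitely many cubes, so for $n$ at least the maximum of their weights the whole path lies in $S_n$, and hence $\calC$ and $\calC'$ have a common ancestor in $\RR_w$. Thus $\RR_w$ is always a (connected) tree. What can genuinely fail is only the uniqueness for large $n$: one may have a connected tree in which new leaves appear at every level (each merging with the main branch at the next step), so that $|\chic_w^{-1}(n)|>1$ for all $n$. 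Your identification ``disconnected $\Leftrightarrow$ last clause fails'' is therefore incorrect in one direction, and it led you to skip a required verification.
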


The property  $|\chic_w^{-1}(n)|=1$ for $n\gg 0$ is not always satisfied.
However, the graded roots associated with connected negative definite plumbing graphs
(see below) satisfies this condition as well.

\begin{proposition}\label{th:HHzero} If $\RR$ is a graded root associated with $(T,w)$ and
 $|\chic_w^{-1}(n)|=1$ for all $n\gg 0$ then $\bH(\RR)=\bH^0(T,w)$.
\end{proposition}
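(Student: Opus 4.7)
The plan is to show that the graded $\Z[U]$-module structure of $\bH(\RR_w,\chi_w)$ is built from exactly the same combinatorial data as $\bH^0(T,w)$, namely the system $\{\pi_0(S_n)\}_n$ together with the inclusion-induced maps $\pi_0(S_n)\to \pi_0(S_{n+1})$.

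First I would unpack the definition of $\bH(\RR)$ via Definition \ref{9.2.6}. The 1-complex $\RR_w$ has vertex weights $w_0(\calC_n^i)=n$ and edge weights $w_1([\calC_n^i,\calC_{n+1}^j])=n+1$. Writing $S_n^{\RR}\subset \RR_w$ for the union of cubes of weight $\leq n$, a direct check shows that $S_n^{\RR}$ consists of every vertex $\calC_m^i$ with $m\leq n$ together with every edge $[\calC_{m}^i,\calC_{m+1}^j]$ with $m+1\leq n$. The hypothesis $|\chi_w^{-1}(n)|=1$ for $n\gg 0$ is needed precisely to ensure $(\RR_w,\chi_w)$ qualifies as a graded root in the sense of Definition \ref{def:2.1}, so that $\bH(\RR_w)$ is well-defined.

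Next, I would establish the key bijection $\pi_0(S_n^{\RR})\cong \pi_0(S_n)$ for every $n\geq m_w$. Since $S_m\subset S_{m+1}$, each component $\calC_m^i$ of $S_m$ is contained in a \emph{unique} component $\calC_{m+1}^j$ of $S_{m+1}$; this gives each vertex of $\RR_w$ at level $m<n$ a unique outgoing edge to its parent at level $m+1$, and this edge lies in $S_n^{\RR}$ whenever $m+1\leq n$. Iterating, each vertex of $S_n^{\RR}$ is connected by a unique upward path to a vertex at the top level $n$. Hence the connected components of $S_n^{\RR}$ are in canonical bijection with $\pi_0(S_n)$, and we obtain a natural identification $H^0(S_n^{\RR},\Z)\simeq H^0(S_n,\Z)$.

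Finally, I would verify that this identification is compatible with the $U$-action. The $U$-action on $\bH^0(T,w)$ is induced by the restriction $H^0(S_{n+1},\Z)\to H^0(S_n,\Z)$ corresponding to the assignment $\calC_n^i\mapsto \calC_{n+1}^j$ where $\calC_n^i\subset \calC_{n+1}^j$. On the $\RR_w$ side, $U$ is induced by $H^0(S_{n+1}^{\RR},\Z)\to H^0(S_n^{\RR},\Z)$; under the bijection above, the component of $S_n^{\RR}$ represented by $\calC_n^i$ lies in the component of $S_{n+1}^{\RR}$ represented by its parent $\calC_{n+1}^j$ (since the connecting edge has weight $n+1$). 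Thus the two restriction maps agree combinatorially, yielding the desired isomorphism of graded $\Z[U]$-modules $\bH(\RR_w,\chi_w)\simeq \bH^0(T,w)$.

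The main point to be careful about is the asymmetry of the 1-cube weight (it equals the weight of the \emph{upper} vertex), which is exactly what ensures edges from level $m$ to $m+1$ first appear at filtration level $n=m+1$ — precisely when the two components they connect merge. Apart from this, the argument is a tidy translation between two combinatorial incarnations of the same inclusion poset.
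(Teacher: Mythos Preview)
The paper states this proposition without proof, so there is no argument to compare against. Your proof is correct and is essentially the canonical one given the paper's Definition~\ref{9.2.6}, which already defines $\bH(\RR)$ as $\oplus_n H^0(S_n^{\RR},\Z)$ for the $1$-complex $\RR$ with the induced weights; the only content is your bijection $\pi_0(S_n^{\RR})\cong\pi_0(S_n)$ and the $U$-compatibility, both of which you have handled cleanly.
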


\section{Surface singularities and the topological lattice cohomology}\label{s:SSTLC}

\subsection{The combinatorics  of the resolutions}\label{s:prel}
\cite{Nfive,NOSz,NGr}

\bekezdes
Let $(X,o)$ be the germ of a complex analytic normal surface singularity with link $M$.
Let $\phi:\widetilde{X}\to X$ be a good   resolution   of $(X,o)$ with
 exceptional curve $E:=\phi^{-1}(0)$,   and  let $\cup_{v\in\calv}E_v$ be
the irreducible decomposition of $E$. (By good resolution we mean that each $E_v$ is smooth, and $E$ is a normal crossing divisor.)
 Let $\Gamma$ denote  the dual resolution graph of $\phi$.  Note that $\partial \tX\simeq M$.


The lattice $L:=H_2(\widetilde{X},\mathbb{Z})$ is  endowed
with the natural  negative definite intersection form  $(\,,\,)$. It is
freely generated by the classes of  $\{E_v\}_{v\in\mathcal{V}}$.
 The dual lattice is $L'={\rm Hom}_\Z(L,\Z) \simeq\{
l'\in L\otimes \Q\,:\, (l',L)\in\Z\}$.
$L$ is embedded in $L'$ with
 $ L'/L\simeq {\rm Tors}( H_1(M,\mathbb{Z}))$, which is abridged by $H$.
 The class of $l'$ in $H$ is denoted by $[l']$.

We define the Lipman cone as $\calS':=\{l'\in L'\,:\, (l', E_v)\leq 0 \ \mbox{for all $v$}\}$, and we  also
set $\calS:=\calS'\cap L$. If $s'\in\calS'\setminus \{0\}$ then
all its $E_v$--coordinates  are strictly positive.

There is a natural partial ordering of $L'$ and $L$: we write $l_1'\geq l_2'$ if
$l_1'-l_2'=\sum _v r_vE_v$ with every  $r_v\geq 0$. We set $L_{\geq 0}=\{l\in L\,:\, l\geq 0\}$ and
$L_{>0}=L_{\geq 0}\setminus \{0\}$.
The support of a cycle $l=\sum n_vE_v$ is defined as  $|l|=\cup_{n_v\not=0}E_v$.

If  $H_1(M,\Q)=0$ then
each $E_v$ is rational, and the dual graph of any good resolution is a tree.

Artin's fundamental cycle $Z_{min}\in L_{>0}$ is defined as the smallest non-zero cycle in $\calS$
 (with respect to $\geq $).

The {\it (anti)canonical cycle} $Z_K\in L'$ is defined by the
{\it adjunction formulae}
$(Z_K, E_v)=(E_v,E_v)+2-2g_v$ for all $v\in\mathcal{V}$, where
$g_v$  denotes the genus of $E_v$.
(The cycle $-Z_K$  is the first Chern class of the line bundle $\Omega^2_{\tX}$.)
We write $\chi:L'\to \Q$ for the (Riemann--Roch) expression $\chi(l'):= -(l', l'-Z_K)/2$.

The singularity (or, its topological type) is called numerically Gorenstein if $Z_K\in L$.
(Since $Z_K\in L$ if and only if the line bundle $\Omega^2_{X\setminus \{o\}}$ of holomorphic 2--forms
on $X\setminus \{o\}$ is topologically trivial, see e.g. \cite{Du}, the $Z_K\in L$ property
is independent of the resolution). $(X,o)$ is called Gorenstein if $Z_K\in L$ and
$\Omega^2_{\tX}$ (the sheaf of holomorphic 2--forms) is isomorphic to $ \calO_{\tX}(-Z_K)$ (or,
equivalently, if the line bundle $\Omega^2_{X\setminus \{o\}}$ is holomorphically trivial).
If $\tX$ is a minimal resolution then (by the adjunction formulae) $Z_K\in \calS'$.

\subsection{The topological lattice cohomology associated with $\phi:\tX\to X$}\label{s:latticeplgraphs}\cite{NOSz,Nlattice}

\bekezdes\label{9dEF1} We consider a good resolution $\phi$ as above
and we assume that the link  $M$ is a
rational homology sphere. We write $s:=|\cV|$.
Then we automatically have  a free $\Z$-module $L=\Z^s$ with
a fixed bases $\{E_v\}_v$. The Riemann--Roch expression $\chi$ defines a weight function
$w_0(l)=\chi(l)$, hence  a set
of  weight functions
$ w_q(\square_q)=\max\{\chi(v)\,:\, v\ \mbox{is a vertex of
$\square_q$}\}$.

\begin{definition}\label{9DEF}  The  $\Z[U]$-modules $\bH^*(\R^s,w)$ and
$\bH^*_{red}(\R^s,w)$ obtained in this way are
called the {\em topological lattice cohomologies} associated with the canonical spin$^c$--structure.
They are denoted  by $\bH^*(\Gamma,-Z_K)$, respectively
$\bH^*_{red}(\Gamma,-Z_K)$ (or $\bH^*_{top,0}(M)$ and $\bH^*_{top,red,0}(M)$ respectively).

The same weight function defines a graded root $(\RR(\Gamma, -Z_K),\chic)=\RR_{top,0}(M)$ as well.
It is called the
{\it topological graded root} associated with the canonical spin$^c$--structure.
\end{definition}

\begin{proposition}\label{9STR2}
(a) $\bH^*_{red}(\Gamma,-Z_K)$ is finitely generated over $\Z$.

(b) The degree $d>0$ summands  of \ $\bH^0_{red}(\Gamma,-Z_K)$ are zero.

(c)  $\bH^*(\Gamma,-Z_K)$  and $(\RR(\Gamma, -Z_K),\chic)$
depend only on $M$ and they are independent of the choice of the good resolution $\phi$.
The root  $(\RR(\Gamma, -Z_K),\chic)$  satisfies the property  $|\chic^{-1}(n)|=1$ for $n\gg 0$.

(d) The restriction
$\bH^*(\Gamma,-Z_K)\to \bH^*((\R_{\geq 0})^s,-Z_K)$
induced by the inclusion $(\R_{\geq 0})^s\hookrightarrow \R^s$ is an isomorphism
of graded $\Z[U]$ modules.
\end{proposition}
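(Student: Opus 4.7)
The plan splits into three logically separate steps: a setup about $\chi$, a descent argument yielding (b) and (d), and a blow-up computation for (c). Negative definiteness of the intersection form makes $\chi(l) = -(l, l-Z_K)/2$ strictly convex and proper on $L \otimes \R$, so every sublevel set $\{\chi \leq n\} \cap L$ is finite and $S_n$ is a finite cubical complex. For (d), I would build, for each $n$, a deformation retraction $S_n \to S_n \cap (\R_{\geq 0})^s$ compatible with the inclusions $S_n \hookrightarrow S_{n+1}$ (which automatically handles the $U$-action, since that action is literally this inclusion). The core is a lattice-level descent: for $l \in L$ with $l \notin L_{\geq 0}$, write $l = l^+ - l^-$ with $l^\pm \in L_{\geq 0}$ of disjoint support and construct a lattice path from $l$ to $l^+$ staying in $S_{\chi(l)}$. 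The key input is
\[
(l, l^-) = (l^+, l^-) - (l^-, l^-) > 0,
\]
where the first term is $\geq 0$ (disjoint supports, $(E_v, E_w) \geq 0$ for $v \neq w$) and the second is $> 0$ by negative definiteness. This provides an index $v$ with $l^-_v > 0$ and $(l, E_v) > 0$; combined with $\chi(l + E_v) - \chi(l) = \chi(E_v) - (l, E_v)$ this gives the descent step, and iterating lands in $L_{\geq 0}$. The resulting lattice paths, together with standard cubical bookkeeping, yield the deformation retraction.

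For (b), reduced by (d) to $(\R_{\geq 0})^s$, I would show $S_n \cap (\R_{\geq 0})^s$ is connected for $n \geq 1$ via a symmetric descent inside $L_{\geq 0}$: negative definiteness gives, for each $l \in L_{>0}$, some $v$ with $l_v > 0$ and $(l, E_v) < 0$, and an iteration produces a path from $l$ to $0$ in $S_{\max(n,1)}$. For (a), the same descent tools, applied to $S_n$ for $n \gg 0$, would show that the cube complex is contractible: the convex sublevel set $\{\chi \leq n\} \subset L \otimes \R$ is star-shaped about the minimum $Z_K/2$ of $\chi$, and for $n$ large the cubical complex $S_n$ is a tight enough approximation to inherit this contractibility (by a discretization of the straight-line retraction). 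Combined with the finiteness of $S_n$, this yields that $\bH^*_{red}$ is finitely generated over $\Z$.

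For (c), any two good resolutions are related by a finite chain of blow-ups of smooth points or of crossings of exceptional curves, so I would reduce invariance to a single blow-up $\phi': \tX' \to \tX$ with new exceptional $E_0$. Then $L' = L \oplus \Z E_0$, with $(E_0, E_0) = -1$ and $Z_{K'}$ an explicit modification of $Z_K$. A direct check shows that for fixed $l \in L$ the function $k \mapsto \chi'(l + kE_0)$ is a convex quadratic in $k$ with unique integer minimum $k(l)$ at which its value equals $\chi(l)$. Projection along the $E_0$-axis then gives a cubical deformation retraction $S_n' \to S_n$ compatible with the $U$-action, hence the isomorphism $\bH^*(\Gamma', -Z_{K'}) \cong \bH^*(\Gamma, -Z_K)$ and the corresponding one of graded roots. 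The property $|\chic^{-1}(n)| = 1$ for $n \gg 0$ is just the connectedness of $S_n$ for large $n$, immediate from (b) (or the contractibility in (a)).

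The main obstacle I expect lies in (d): the clean choice of $v$ from $(l, l^-) > 0$ yields $(l, E_v) \geq 1$ but not automatically $(l, E_v) \geq \chi(E_v)$ when vertices of large self-intersection are present, so one must either refine the choice of $v$ (so that positive contributions of neighbors dominate) or allow a temporary, budgeted increase of $\chi$ along the path within $n - \chi(l)$. Either way, lifting the pointwise lattice descent to a cube-level deformation retraction that is coherent across all sublevel sets $S_n$ simultaneously requires the systematic framework of Laufer-type algorithms, and this is the step that carries the bulk of the technical weight of the proof.
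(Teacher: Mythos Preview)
The paper does not prove this proposition; it is a review statement cited from \cite{NOSz,Nlattice}. Your outline broadly matches what is done there (and what the paper itself does for the analytic analogue in Theorem~\ref{th:annlattinda}): lattice descent arguments for (b) and (d), and a blow-up comparison with projection along the new exceptional direction for (c). For (c) in particular, your computation is correct --- one finds $\chi'(\pi^*l+aE_{new})=\chi(l)+a(a+1)/2$, so the two integer minima $a\in\{-1,0\}$ both return $\chi(l)$, exactly as in the paper's analytic identity~(\ref{eq:HOM2a}).

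Your stated ``main obstacle'' for (d), however, is not an obstacle. Under the $\Q HS^3$ hypothesis each $E_v$ is rational, and the adjunction formula gives $\chi(E_v)=-(E_v,E_v-Z_K)/2=1$ regardless of the self-intersection number. Hence $(l,E_v)\geq 1$ already yields $\chi(l+E_v)=\chi(l)+1-(l,E_v)\leq\chi(l)$, and the descent to $L_{\geq 0}$ works cleanly at the lattice-point level. The genuine subtlety lies elsewhere: in (b), your ``symmetric descent'' via $(l,E_v)<0$ does \emph{not} control $\chi(l-E_v)$, since $\chi(l-E_v)=\chi(l)-1+(l,E_v)-E_v^2$ and $-E_v^2$ is unbounded. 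The correct argument (see \cite{NOSz}) is the contrapositive: if $\chi(l-E_v)>\chi(l)$ for every $v\in|l|$, then $(l,E_v)\geq(Z_K,E_v)$ for all such $v$, whence $(l,l-Z_K)\geq 0$ and $\chi(l)\leq 0$. Thus whenever $\chi(l)\geq 1$ a non-increasing step exists, and one handles the region $\chi\leq 0$ separately (e.g.\ via an upward Laufer move into $\calS$). Your sketch for (a) via ``tight approximation of a convex body'' would also need to be replaced by this kind of explicit combinatorial contraction.
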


\bekezdes {\bf The Euler characteristic and the Seiberg--Witten invariant.}\label{s:LCSW} \
The Seiberg--Witten invariant ${\rm Spin}^c(M)\to \Q$ associates a rational number
$\sw _{\sigma}(M)$  to each spin$^c$--structure $\sigma$ of the link.

\begin{theorem}\label{th:ECharLC} \cite{NJEMS} \ 
$eu(\bH^*(\Gamma,-Z_K))=\sw_{\sigma_{can}}(M)-(Z_K^2+|\cV|)/8$.
\end{theorem}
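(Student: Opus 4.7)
The plan is to reduce the computation of $eu(\bH^*(\Gamma,-Z_K))$ to a finite alternating sum on a rectangle, and then match it with a previously established formula expressing $\sw_{\sigma_{can}}(M)-(Z_K^2+|\cV|)/8$ as a periodic constant of a counting function built from $\chi$ on the Lipman cone.

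First, by Proposition \ref{9STR2}(d) I may replace $\bH^*(\Gamma,-Z_K)$ by $\bH^*((\R_{\geq 0})^s,-Z_K)$. Because the form is negative definite and (since $M$ is a rational homology sphere) $\chi(l)\to\infty$ along $\calS$, one can choose $c\in\calS$ deep in the Lipman cone and $N\gg 0$ so that for the rectangle $T=[0,Nc]$ the inclusion $T\hookrightarrow (\R_{\geq 0})^s$ induces an isomorphism of the $\Z[U]$-modules $\bH^*_{red}$ (connected components stabilize since any $l\notin T$ with bounded $\chi$ lies in the same component as some boundary lattice point of $T$). Thus $eu(\bH^*(\Gamma,-Z_K))=eu(\bH^*(T,-Z_K))$, and by Lemma \ref{bek:LCSW} this equals
\[
\sum_{\square_q\subset T}(-1)^{q+1}w_q(\square_q),\qquad w_q(l,I)=\max_{I'\subseteq I}\chi(l+E_{I'}).
\]

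The next step is the combinatorial reorganization of this cube sum into a lattice-point sum. Using the identity $\chi(l+E_v)-\chi(l)=1-(l,E_v)$ (valid since all $g_v=0$), one can introduce a lexicographic tie-breaking rule on the vertices of each cube to select a canonical maximizer; after inclusion-exclusion the cube sum collapses to an expression of the form $-\chi(0)+\sum_{l\in T\setminus\{0\}}a(l)\,\chi(l)$ with explicit combinatorial weights $a(l)$ supported near the coordinate hyperplanes and the far boundary of $T$. The bulk contribution assembles into the Ehrhart-style constant term (i.e.\ the periodic constant) of the counting function $\#\{l\in\calS : l\leq\bullet\}$, while the far-boundary contributions vanish by a polynomial cancellation as $N\to\infty$ (this is where the inequality $|\chic^{-1}(n)|=1$ for $n\gg 0$ from Proposition \ref{9STR2}(c) enters).

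Finally, I would invoke the surgery/Nicolaescu-style formula (for negative definite plumbed rational homology spheres) that identifies the periodic constant of the topological Poincar\'e series $Z(\bt)=\prod_v(1-\bt^{E_v^*})^{\delta_v-2}$ with $\sw_{\sigma_{can}}(M)-(Z_K^2+|\cV|)/8$, and match it with the expression obtained above. The result then follows.

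\textbf{Main obstacle.} The delicate part is the middle step: the maxima defining $w_q$ destroy additivity, so the alternating sum over cubes does not factor naively through the lattice. One has to invest care in the tie-breaking/inclusion-exclusion bookkeeping and in estimating the boundary contributions as $N\to\infty$. A cleaner route, which I would pursue if the direct manipulation becomes too involved, is to first verify the identity for almost rational graphs (where the Reduction Theorem collapses the cube complex to a single path and the cube sum reduces to the path-formula (\ref{eq:pathweights})) and then extend to arbitrary negative definite graphs via the surgery formula for both $eu(\bH^*)$ and the Seiberg-Witten invariant.
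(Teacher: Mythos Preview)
The paper does not prove Theorem~\ref{th:ECharLC}: it is quoted from \cite{NJEMS} and used as a black box, so there is no proof in the paper to compare your proposal against.

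That said, your outline is broadly the strategy of \cite{NJEMS}: restrict to a large rectangle in the first quadrant, invoke the cube-sum formula of Lemma~\ref{bek:LCSW}, and match the result with the periodic constant of the topological Poincar\'e series $Z(\bt)$, which in turn is known to equal $\sw_{\sigma_{can}}(M)-(Z_K^2+|\cV|)/8$. Your Steps~1--3 and the final matching step are correct as stated.

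The gap is exactly where you locate it. Your description of the middle step (``lexicographic tie-breaking rule on the maximizers, then inclusion--exclusion collapses to $\sum a(l)\chi(l)$'') is a heuristic, not an argument: the max in $w_q$ does not linearize this way, and the claimed cancellation of far-boundary terms needs a precise statement and proof. In \cite{NJEMS} this step is handled differently --- the alternating cube sum is related directly to the coefficients of $Z(\bt)$ via an identity of the type alluded to in Remark~\ref{rem:P0AN} (the topological analogue of \eqref{eq:sumsum}), not through a tie-breaking scheme. Your alternative route (AR graphs plus a surgery induction) is a genuinely different argument; it can be made to work, but it requires a surgery exact sequence for $eu(\bH^*)$ compatible with the known one for $\sw$, which is itself nontrivial (cf.\ \cite{BN,Nexseq}). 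Either way, what you have written is a plan, not a proof.
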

In other words, {\it 
$\bH^*(\Gamma,-Z_K)$
is the categorification of $\sw_{\sigma_{can}}(M)$    (normalized by $(Z_K^2+|\cV|)/8$)}.

\bekezdes\label{bek:SWIC}
The geometric genus of any normal surface singularity $(X,o)$ is defined as
  $h^1(\tX, \calO_{\tX})$, and it is denoted by $p_g$.
($h^1(\tX, \calO_{\tX})$ is independent of the choice of $\tX$.)
 A priori (and usually) it is an analytic invariant.

We say, following \cite{NJEMS,NeNi,NeNiII},
 that the singularity $(X,o)$ (with rational homology link)
 satisfies the {\bf Seiberg--Witten Invariant Conjecture (SWIC) for the
 canonical spin$^c$--structure} if $p_g(X,o)=eu(\bH^*(\Gamma,-Z_K))$, i.e., $p_g$ can be characterized topological by
 the normalized Seiberg--Witten invariant.

 The SWIC  is satisfied in the following cases:
 weighted homogeneous singularities, superisolated singularities associated with rational unicuspidal curves,
 suspension singularities of type $f(x,y)+z^n$ with $f$ irreducible, splice quotient singularities (e.g.
 rational or minimally elliptic singularities),  see \cite{BN,Spany,BLMN2,NJEMS,NeNi,NeNiII,NeNiIII,NO2}.

 \bekezdes\label{bek:pgbounds} {\bf Euler characteristic type bounds for $p_g$.}
 Though the SWIC (for the canonical spin$^c$-structure) is not true in general, there are some generally valid
 topological  bounds provided by lattice cohomology.

 Let ${\mathcal K}$ be the (topologically defined )
set of cycles $\lfloor Z_K\rfloor _++ L_{\geq 0}$. Here  $\lfloor Z_K\rfloor _+$ is the effective part of the integral part
of $Z_K$.
 By \ref{th:GR},   $h^1(\cO_l)=p_g$ for any $l\in {\mathcal K}$ and for any analytic structure supported by $\Gamma$, i.e.
 $h^1(\cO_l)$ in this zone already `stabilizes'. The lattice point  $\lfloor Z_K\rfloor _+$
is essential from topological point of view  as well:
 all the (path) lattice cohomologies can be computed already in the
 rectangle $R(0, \lfloor Z_K\rfloor _+)$, cf. \cite{NOSz,Nlattice}.

\bekezdes \label{bek:path}
In the sequel we denote by  $\cP$ the set of paths $\gamma=\{x_i\}_{i=0}^t$ with $x_0=0$ and arbitrary end-cycle
$x_t=c$ from ${\mathcal K}$.
A path is {\it increasing} if $\epsilon_i=+1$ for every $i$.

Let $\bH^0(\gamma,\Gamma, -Z_K)$ be the path lattice cohomology associated with $\gamma$ and the weight function
associated with $\chi$. 
Let $eu(\bH^0(\gamma,\Gamma,-Z_K))$ be its Euler characteristic defined as in (\ref{eq:euh0}).
It turns out (see e.g. \cite{NSig,NSigNN}, or \cite{Book})  that
$\min_{\gamma \in \cP}\, eu(\bH^0(\gamma,\Gamma,-Z_K))$ is realized by an increasing path,  and this expression
 is independent of the choice of this $c$, whenever $c\in {\mathcal K}$.
Furthermore,
\begin{equation}\label{cor:INEQeu}
p_g\leq \min_{\gamma \in \cP}\, eu(\bH^0(\gamma,\Gamma,-Z_K))\leq
 eu (\bH^0(\Gamma, -Z_K)).
\end{equation}

\begin{example}\label{ex:MINforwh}
The equality  $p_g=\min_{\gamma \in \cP}\, eu(\bH^0(\gamma,\Gamma,-Z_K))$ is realized
(under the assumption that  the link is a rational homology sphere)
by the following families:

(a)  singularities which  satisfies the
SWIC for the canonical spin$^c$ structure (cf. \ref{bek:SWIC})
and $\bH^{\geq 1}(\Gamma,-Z_K)=0$ (use (\ref{cor:INEQeu})). This includes e.g.
weighted homogeneous singularities,
rational singularities, or
maximally elliptic singularities (i.e. elliptic singularities which satisfy
 $p_g=\mbox{length of the elliptic sequence}\ \ell_{seq}$).

(b)  superisolated singularities, cf.  \cite{NSig};

(c) local Weil divisors in affine toric 3-varieties  with  nondegenerate Newton principal part, cf.
\cite{NSigNN}.

\end{example}

\begin{remark}\label{rem:SPLIT}
Assume that $p_g=\min_{\gamma \in \cP}\, eu(\bH^0(\gamma,\Gamma,-Z_K))$, and that
$\min_{\gamma \in \cP}\, eu(\bH^0(\gamma,\Gamma,-Z_K))$ is realized by the increasing
path $\gamma=\{x_i\}_{i=0}^t$,  $x_{i+1}=x_i+ E_{v(i)}$.
Then all the exact sequences $0\to \cO_{ E_{v(i)}}(-x_i)\to \cO_{x_{i+1}}\to \cO_{x_i}\to 0$ cohomologically must split
(for details see e.g. \cite{NSig,NSigNN}). In particular, $H^0(\cO_{x_{i+1}})\to H^0(\cO_{x_i})$ is surjective for every $i<t$.
\end{remark}

\subsection{Measure of non-rationality. `Bad' vertices}\label{ss:BadVer} \cite{NOSz,Book}\

Rational graphs play a distinguished role in this theory: they are the graphs with $\bH^*_{red}(\Gamma)=0$.
Usually, in the analysis of a graph $\Gamma$, we wish to understand how far is $\Gamma$ to be rational.

\begin{example}\label{ex:LatHomRat} {\bf Rational graphs.}
Recall that $(X,o)$ is called rational if $p_g=0$. By a result of Artin \cite{Artin62,Artin66}
$p_g=0$ if and only if
 $\chi(l)\geq 1$ for all $ l\in L_{>0}$  (hence it is a topological property of $M$  readable from $\Gamma$, those graph which satisfy it are called rational).
The links of
 any  rational singularity is a
 rational homology sphere. 
 The class of  rational  graphs is closed while taking subgraphs or/and
decreasing the Euler numbers $E_v^2$.



We have the following characterizations of the rationality in  terms of topological lattice
cohomology (or graded roots):
 $\Gamma$ is rational $\Leftrightarrow$
 $\bH^0_{red}(\Gamma,-Z_K)=0$ $\Leftrightarrow$
$\bH^*_{red}(\Gamma,-Z_K)=0$ $\Leftrightarrow$   $\RR_{-Z_K}=\RR_{(0)}$, cf. \cite{NOSz}.
\end{example}

If $M$ is a $\Q HS^3$, then by
decreasing all the Euler numbers of $\Gamma$
we  obtain a rational graph.
The next definition aims to
identify those vertices where such a decrease is really necessary.

\begin{definition}\label{def:SWrat} Let $\Gamma$ be a resolution graph such that $M$ is a rational homology sphere.
A subset of vertices
$\ocalj=\{v_1,\ldots, v_{\overline{s}}\}\subset \cV$ is called {\it B--set}, (set of `bad vertices')
if by replacing the Euler numbers
 $e_v=E_v^2$ indexed by $v\in \ocalj$ by some more negative integers
$e'_v\leq e_v$ we get a rational graph.

A graph is called AR-graph (`almost rational graph') if it admits a B--set of cardinality $\leq 1$.
\end{definition}
\begin{example}\label{ex:*sets}
(a) A possible B--set can be chosen in many different ways, usually
it is not determined uniquely even if it is minimal with this property.
Usually we allow non-minimal $B$--sets as well.

(b) If $H_1(M,\Q)=0$ then the set of nodes is a B--set.
Hence  any  star-shaped graph (with $H_1(M,\Q)=0$) is AR.
Other  AR families are: rational and elliptic graphs and graphs of superisolated singularities associated with a
rational unicuspidal curve \cite{NOSz,NGr}.

(c)
The class of  AR graphs is closed while taking subgraphs or/and
decreasing the Euler numbers.

\end{example}

\bekezdes\label{bek:XI}
 {\bf The definition of the lattice points $x(\bar{l})$.}
Assume that $\ocalj:=\{v_k\}_{k=1}^{\overline{s}}$ is any subset of $\calj$.
Then we split the set of vertices $\calj$ into the disjoint union $\overline{\calj}\sqcup\calj^*$.
Let $\{m_v(x)\}_v$ denote  the coefficients of a cycle $x\in L\otimes \setQ$, that is $x=\sum_{v\in\calj}m_v(x)E_v$.

 Our goal is to define some universal cycles  $x(\bar{l})\in L$
 associated with $\bar{l}\in L(\ocalj)$.

\begin{proposition}\label{lemF1} \ \cite[Lemma 7.6]{NOSz}, \cite{LN1}
For any
$\bar{l}:=\sum_{v\in \ocalj}\ell_v
E_v\in L(\ocalj)$
there exists a unique cycle
$x(\bar{l})\in L$
satisfying the next properties:
\begin{itemize}
\item[(a)] \ \ $m_{v}(x(\bar{l}))=\ell_v$ \ for any distinguished  vertex $v\in\ocalj$;
\item[(b)] \ \ $(x(\bar{l}),E_v)\leq0$ \ for every `non-distinguished vertex' $v\in\calj^*$;
\item[(c)] \ \ $x(\bar{l})$ is minimal with the two previous properties (with respect to $\leq$).
\end{itemize}
\end{proposition}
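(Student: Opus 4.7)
The plan is to work with the set
$$ \mathcal{F}(\bar{l}) := \{y \in L : m_v(y) = \ell_v \text{ for } v \in \ocalj, \ (y, E_v) \leq 0 \text{ for } v \in \calj^*\} $$
of cycles satisfying (a) and (b), and to show that $x(\bar{l})$ is its unique coordinate-wise minimum.

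\emph{Non-emptiness.} The intersection form restricted to $L(\calj^*) := \bigoplus_{v \in \calj^*} \Z E_v$ is a principal sub-matrix of the negative definite form of $\Gamma$, hence itself negative definite. Therefore the Lipman cone associated with the subgraph $\Gamma|_{\calj^*}$ has non-empty interior, and for any integer cycle $z \in L(\calj^*)$ whose pairings $(z, E_v)_{\Gamma|_{\calj^*}}$ are sufficiently negative for all $v \in \calj^*$, the cycle $y := \sum_{w \in \ocalj} \ell_w E_w + z$ lies in $\mathcal{F}(\bar{l})$.

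\emph{Closure under $\wedge$.} If $y_1, y_2 \in \mathcal{F}(\bar{l})$ and $y := y_1 \wedge y_2$ (coordinate-wise minimum), property (a) is immediate. For $v \in \calj^*$ pick $i \in \{1,2\}$ with $m_v(y) = m_v(y_i)$; using $m_w(y) \leq m_w(y_i)$ for every $w$ together with $(E_w, E_v) \geq 0$ for $w \neq v$,
$$ (y, E_v) \leq m_v(y_i) E_v^2 + \sum_{w \neq v} m_w(y_i)(E_w, E_v) = (y_i, E_v) \leq 0. $$

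\emph{Lower bound.} Let $A^* := ((E_v, E_w))_{v,w \in \calj^*}$. Then $-A^*$ is symmetric positive definite with non-positive off-diagonal entries, hence an M-matrix, so $(-A^*)^{-1}$ has non-negative entries. Writing any $y \in \mathcal{F}(\bar{l})$ as $y = \sum_{w \in \ocalj} \ell_w E_w + z_y$ with $z_y \in L(\calj^*)$, and setting $b_v := -\sum_{w \in \ocalj} \ell_w (E_w, E_v)$ for $v \in \calj^*$, condition (b) becomes $(-A^*) z_y \geq -b$; applying the non-negative matrix $(-A^*)^{-1}$ yields the coordinate-wise inequality $z_y \geq -(A^*)^{-1} b$, a fixed rational lower bound independent of $y$.

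\emph{Conclusion.} For each $v \in \calj^*$ the integer $m_v^{\min} := \min\{m_v(y) : y \in \mathcal{F}(\bar{l})\}$ exists by the lower bound; pick $y^{(v)} \in \mathcal{F}(\bar{l})$ realising it. Then $x(\bar{l}) := \bigwedge_{v \in \calj^*} y^{(v)}$ lies in $\mathcal{F}(\bar{l})$ by iterated application of the $\wedge$-closure, achieves $m_v^{\min}$ on every coordinate $v \in \calj^*$ (and $\ell_v$ on $\ocalj$), and therefore is the minimum of $\mathcal{F}(\bar{l})$ --- automatically unique as such. The main obstacle is the boundedness step, which relies on the classical M-matrix inversion property; one can alternatively proceed by running a Laufer-type computation sequence starting from the cycle produced in the non-emptiness step and successively subtracting $E_v$'s for $v \in \calj^*$ whenever this preserves (b), but the termination and globality of the minimum still rest on the same negative-definiteness input.
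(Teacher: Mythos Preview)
Your proof is correct. The paper itself does not give a proof of this proposition; it is quoted from \cite[Lemma~7.6]{NOSz} and \cite{LN1}. Your argument via $\wedge$-closure of $\mathcal{F}(\bar{l})$ together with the M-matrix lower bound is a clean and standard route; the cited references proceed instead via the constructive Laufer-type algorithm you allude to at the end (start from $\bar{l}$ viewed in $L$, and successively add $E_v$ for those $v\in\calj^*$ with $(x,E_v)>0$ until the process terminates), which has the advantage of directly producing the increasing computation sequences used later in \S\ref{ss:CCSAR}, but both approaches rest on the same negative-definiteness input.
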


\bekezdes
 Let us   fix  a B--set $\ocalj\subset \calv$
as in \ref{def:SWrat} (with cardinality $\bar{s}$).
Our goal is to provide an equivalent description of the lattice cohomology using cubes in an $\bar{s}$--dimensional lattice.

Firs recall the isomorphism $\bH^*(\Gamma,-Z_K)\to \bH^*((\R_{\geq 0})^s,-Z_K)$ from
\ref{9STR2}{\it (d)}.

Having this in mind,    for each $\bar{l}=\sum_{v\in \ocalj} \ell_vE_v\in L(\ocalj)$,
with every  $\ell_v\geq 0$, we define the  universal  cycle $x(\bar{l})$ associated with
$\bar{l}$ as in \ref{lemF1}.
Then,  
define   the function
$\overline{w}_0:(\Z_{\geq 0})^{\bar{s}}\to \Z$ by
$\overline{w}_0(\bar{l}):=\chi(x(\bar{l}))$.
 Then $\overline{w}_0$ defines a set $\{\overline{w}_q\}_{q=0}^{\bar{s}}$ of weight functions  as in \ref{9dEF1} by
$\overline{w}_q(\square)=
 \max\{\overline{w}_0(v)\,:\, v \ \mbox{ is a vertex of $\square$}\}$.

\begin{theorem}\label{th:red} {\bf (Topological Reduction Theorem)}  \cite{LN1}
 There exists  a graded $\Z[U]$-module isomorphism
\begin{equation}\label{eq:reda}
\bH^*((\R_{\geq 0})^s,-Z_K)\cong\bH^*((\R_{\geq 0})^{\bar{s}},\overline{w}) \ \ \mbox{and} \
\ \RR((\R_{\geq 0})^s,-Z_K)\cong \RR((\R_{\geq 0})^{\bar{s}},\overline{w}).
\end{equation}
\end{theorem}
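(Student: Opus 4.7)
The isomorphism ought to be induced by the lattice map $\bar{l}\mapsto x(\bar{l})$. Since both sides are defined as direct sums $\oplus_{n} H^{*}(S_{n})$ of cohomologies of sublevel sets with the $U$--action given by the restriction maps $H^{*}(S_{n+1})\to H^{*}(S_{n})$, it suffices to construct, for each $n\in\Z$, a homotopy equivalence
\[
\Phi_{n}:\overline{S}_{n}\hookrightarrow S_{n}\cap(\R_{\geq0})^{s}
\]
(with $S_{n}$ the sublevel set of $\chi$ in the big lattice, $\overline{S}_{n}$ the sublevel set of $\overline{w}_{0}$ in the small lattice) in such a way that the diagrams $\overline{S}_{n}\hookrightarrow\overline{S}_{n+1}$ and $S_{n}\hookrightarrow S_{n+1}$ commute up to homotopy. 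Once this is done, the isomorphism of graded $\Z[U]$-modules will be automatic, and the graded root statement will follow by taking $\pi_{0}$ of both sides and using the definition of $\RR_{w}$ in \ref{bek:GRootW}.

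\textbf{Step 1: The extension of $x$ to cubes.} For each cube $\overline{\square}=(\bar{l},\bar{I})$ in $(\Z_{\geq 0})^{\bar{s}}$ (with $\bar{I}\subset\ocalj$), I would associate the ``fat fibre''
\[
R(\overline{\square}):=\bigl[\, x(\bar{l}),\, x(\bar{l}+E_{\bar{I}})\,\bigr]\subset(\R_{\geq 0})^{s},\qquad E_{\bar{I}}:=\sum_{v\in\bar{I}}E_{v},
\]
i.e.\ the closed rectangle cut out by the two corner cycles $x(\bar{l})$ and $x(\bar{l}+E_{\bar{I}})$. Here one must verify the monotonicity $x(\bar{l})\leq x(\bar{l}+E_{\bar{I}})$; this follows from the universality of \ref{lemF1}, applied to the cycle $x(\bar{l})+E_{\bar{I}}$, which satisfies condition (a) for $\bar{l}+E_{\bar{I}}$ and is bigger than $x(\bar{l}+E_{\bar{I}})$ modulo a non-negative $\calj^{*}$-supported correction produced by a Laufer-type algorithm. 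Then define $\Phi_{n}$ to be the inclusion sending $\overline{\square}$ (viewed as sitting over $\bar{l}+E_{\bar{I}}$) to the corresponding corner vertex $x(\bar{l}+E_{\bar{I}})\in R(\overline{\square})$, and $R(\overline{\square})$ itself (a subcomplex of the big cube complex) to be the ``preimage'' of $\overline{\square}$.

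\textbf{Step 2: The weight-matching lemma.} The crucial computation is
\[
\max_{l\in R(\overline{\square})\cap L}\chi(l)\;=\;\chi\bigl(x(\bar{l}+E_{\bar{I}})\bigr)\;=\;\overline{w}_{|\bar{I}|}(\overline{\square}).
\]
For this I would use the formula $\chi(l+E_{v})-\chi(l)=1-(l+E_{v},E_{v})\leq 1-(l,E_v) -E_v^2$ together with condition (b) of \ref{lemF1}: along any increasing lattice path inside $R(\overline{\square})$ that only adds curves $E_{v}$ with $v\in\calj^{*}$, the intersection products stay $\leq 0$, so $\chi$ is non-decreasing along the path. This shows $S_{n}\cap R(\overline{\square})=R(\overline{\square})$ whenever $\overline{w}_{|\bar{I}|}(\overline{\square})\leq n$, and that otherwise $S_{n}\cap R(\overline{\square})$ deformation retracts onto its boundary components meeting $\{\chi\leq n\}$. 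This is precisely the Laufer-type argument adapted to the partial setting where only $\calj^{*}$-coordinates are allowed to move.

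\textbf{Step 3: Gluing and homotopy equivalence.} I would show that $S_{n}\cap(\R_{\geq 0})^{s}$ is the union of the fat fibres $R(\overline{\square})$ over all cubes $\overline{\square}$ with $\overline{w}(\overline{\square})\leq n$, with intersections $R(\overline{\square}_{1})\cap R(\overline{\square}_{2})=R(\overline{\square}_{1}\cap\overline{\square}_{2})$ (when the latter is a common face) each being contractible in $S_{n}$. An easy nerve-lemma argument then identifies $S_{n}\cap(\R_{\geq 0})^{s}$ up to homotopy with $\overline{S}_{n}$, viewed as the nerve. The compatibility with $n\leq n+1$ is immediate from the construction. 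The $U$-module structure and the $\Z$-grading being compatible by construction, the first isomorphism follows. For the second isomorphism of graded roots, pass to $\pi_{0}$ on each level and invoke the definition in \ref{bek:GRootW}.

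\textbf{The main obstacle.} The nontrivial work is Step 2: controlling the sublevel sets of $\chi$ on the fat fibre $R(\overline{\square})$, and in particular proving that the maximum of $\chi$ on this rectangle is attained at the upper corner $x(\bar{l}+E_{\bar{I}})$ and equals $\overline{w}_{|\bar{I}|}(\overline{\square})$. This is where the ``B-set'' hypothesis really enters: the minimality property (c) in \ref{lemF1} together with the Laufer increment formula for $\chi$ guarantees that cycles in $R(\overline{\square})$ other than the corners are ``wasted'' from the point of view of $\chi$, so that the combinatorics of the big cube complex within $S_{n}$ collapses to that of the small one. The bookkeeping for overlapping fibres (ensuring the intersections are themselves fat fibres of common sub-cubes) is the second technical point, but once Step 2 is in place this is a purely combinatorial verification using the universality of $x(\cdot)$.
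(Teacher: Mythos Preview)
The paper does not prove this theorem; it is quoted from \cite{LN1}. Nonetheless, your sketch contains a genuine error in Step~2, and Step~3 inherits it.

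Your central claim is that $\max_{l\in R(\overline{\square})\cap L}\chi(l)=\chi(x(\bar{l}+E_{\bar I}))$, i.e.\ that the maximum of $\chi$ on the fat fibre is attained at the top corner. This is false. Take the star with centre $v_0$ and two leaves $v_1,v_2$, decorations $E_{v_0}^2=-1$, $E_{v_1}^2=E_{v_2}^2=-3$ (negative definite, and $\{v_0\}$ is a B--set). Then $x(0)=0$, $x(1)=E_{v_0}+E_{v_1}+E_{v_2}$, and one computes $\chi(x(1))=1$ while $\chi(E_{v_1}+E_{v_2})=2$. Thus for $n=1$ the cube $\overline{\square}=[0,1]$ has $\overline w(\overline{\square})=1\le n$, yet $S_1\cap R(\overline{\square})\subsetneq R(\overline{\square})$. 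Your supporting sub-claim (``intersection products stay $\leq 0$ along $\calj^*$--paths inside the fat fibre'') also fails: from $E_{v_0}\in R(\overline{\square})$ one has $(E_{v_0},E_{v_1})=1>0$. Consequently Step~3 collapses too: $S_n\cap(\R_{\geq 0})^s$ is \emph{not} the union of the fat fibres with $\overline w\le n$.

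The argument in \cite{LN1} (mirrored by the analytic version proved here in Theorem~\ref{th:REDAN}) does not attempt to collapse all $\calj^*$--coordinates at once. Instead it eliminates one $v\in\calj^*$ at a time, showing that the projection $\pi_v:S_{n}\to S_{n}'$ in the single $E_v$--direction is a homotopy equivalence. The key lemma at each step is one--dimensional: along each fibre $\{y+tE_v\}_t$ the weight is first non-increasing, then constant, then non-increasing again (in the topological case this uses $\chi(y+(t+1)E_v)-\chi(y+tE_v)=1-(y+tE_v,E_v)$ and the rationality of the subgraph on $\calj^*$), and one checks that the constant plateaux over the vertices of any cube $(y,I)$ share a common level. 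Your global ``fat fibre'' picture conflates this iterated fibration with a single product decomposition, which the weight function does not respect.
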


\subsection{Concatenated  computation  sequences of AR graphs}\label{ss:CCSAR} \cite{NOSz}

\bekezdes Assume  that $\Gamma $ is an AR  resolution graph. Let $\{v_0\}$ be an B--set.


Note that by Reduction Theorem \ref{th:red} $\bH^{\geq 1}(M(\Gamma), -Z_K)=0$.
The Reduction  Theorem provides a formula for $\bH^{0}(\Gamma, -Z_K)$ too using a lattice cohomology associated with
$\Z_{\geq 0}\subset \R_{\geq 0}$ and weight function $\Z_{\geq 0}\ni \ell\mapsto \chi(x(\ell))$. In the next discussion we present another (equivalent) version,
which represents
$\bH^{0}(\Gamma, -Z_K)$ as the lattice cohomology of an increasing
path $T=\gamma=\{x_i\}_{i\geq 0} $ embedded in the lattice $L_{\geq 0}$.
The point is that $\gamma $ determines the   1-chain $C_\gamma:=
\cup_{i\geq 0}[x_i,x_{i+1}]$ of  1-cubes in
$L\otimes \R$ (without any loop), such that $C_\gamma\cap S_{n} \hookrightarrow S_{n}$ is a homotopy equivalence.
In particular, all the connected components of $S_n$ (whenever $S_n\not=\emptyset$) are contractible.

The construction of $\gamma$ runs  as follows: it is
defined as a series of concatenated computation sequences.
 It contains,   as intermediate terms,  all the universal cycles $\{x(\ell)\}_{\ell\geq 0}$ in an
 increasing order.
 The first term is $x_0=x(0)=0$.
The part of the sequence starting from  $x(\ell)$  and ending with $x(\ell+1)$
starts with $x(\ell)$ and the next term is $x(\ell)+E_{v_0}$. Then, the continuation is a
generalized Laufer-type computation sequence connecting  $x(\ell)+E_{v_0}$
with $x(\ell+1)$. Indeed, the multiplicity of $E_0$ in both $x(\ell)+E_{v_0}$
and  $x(\ell+1)$ is $\ell+1$, and  by  the universal (minimality) property of $x(\ell)$ we get
 that  $x(\ell+1)\geq x(\ell)+E_{v_0}$.
Then, there is a (Laufer type) generalized  computation sequence, or increasing path, $\gamma^{(\ell+1)}=\{x_i^{(\ell+1)}\}_i$,
which connects
 $x(\ell)+E_{v_0}$ and  $x(\ell+1)$ (see \cite{NOSz}).
Then we proceed inductively.


In general, it is not easy to concretely identify  the cycles $x(\ell)$.
Fortunately, in several applications we only need
the values $\tau(\ell)=\chi(x(\ell))$. In
most of the cases they are computed inductively using \ref{lem:xellprops}{\it (d)}, hence
basically  one needs only to  know
 $(x(\ell),E_{v_0})$ for any $\ell$.


\begin{proposition}\label{lem:xellprops}\cite{NOSz}
(a) The path $\{x_i\}_i$ is  increasing: $x_{i+1}=x_i+E_{v(i)}$.

(b) For any $E_v$-coefficient one has  $\lim_{\ell\to \infty} m_v(x(\ell))=\infty$ (where $v\in\calv$).

(c)  $\chi$ along  each part (subsequence) $\gamma^{(\ell)}$ is constant.  This also implies that
$\bH^0((\R_{\geq 0})^{\bar{s}},\overline{w})$ from the reduction theorem equals  the path lattice cohomology
$\bH^0(\gamma, -Z_K)$).

(d)  Set $\tau(\ell)=\chi(x(\ell))$. Then $\tau(\ell+1)=\tau(\ell)+1-(x(\ell),E_{v_0})$ and
there exists $\ell_0$ such that $\tau(\ell+1)\geq \tau(\ell) $ for $\ell\geq \ell_0$.

(e) $eu(\bH^*(\Gamma,-Z_K) = eu(\bH^*(\gamma,-Z_K)
=\sum _{\ell\geq 0}
\max\{\, \tau(\ell)-\tau(\ell+1), 0\,\}$.
\end{proposition}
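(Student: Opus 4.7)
The plan is to establish (a)--(e) in order, with (c) being the technical core.

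Part (a) is immediate from the construction of $\gamma$ in \ref{ss:CCSAR}: each step either adds $E_{v_0}$ at a junction $x(\ell)\to x(\ell)+E_{v_0}$ or adds some $E_v$ with $v\in\calj^*$ via the sub-sequence $\gamma^{(\ell+1)}$---both increasing operations. For part (b), I will show that $x(\ell)/\ell$ converges, as $\ell\to\infty$, to the dual base element $E^*_{v_0}\in L\otimes\bQ$ characterized by $(E^*_{v_0},E_v)=-\delta_{v,v_0}$. Since $\Gamma$ is connected and negative definite, a standard Lipman-cone argument yields $E^*_{v_0}=\sum_v c_vE_v$ with every $c_v>0$, whence $m_v(x(\ell))\to\infty$ for every $v\in\calv$.

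Part (c) is the heart of the matter and uses the AR hypothesis crucially: AR implies that the subgraph $\Gamma|_{\calj^*}$ is rational, since subgraphs of rational graphs are rational. First, $\Delta:=x(\ell+1)-x(\ell)-E_{v_0}\geq 0$ and is supported on $\calj^*$: the cycle $x(\ell+1)-E_{v_0}$ satisfies the defining conditions of $x(\ell)$ from \ref{lemF1} (the orthogonality $\leq 0$ on $\calj^*$ is inherited from $x(\ell+1)$, using $(E_{v_0},E_v)\geq 0$ for $v\in\calj^*$), so $x(\ell+1)-E_{v_0}\geq x(\ell)$ by the minimality of $x(\ell)$. I then construct a Laufer-type sequence $y_0=x(\ell)+E_{v_0},\dots,y_k=x(\ell+1)$ with $y_{i+1}=y_i+E_{v_i}$, $v_i\in\calj^*$, $(y_i,E_{v_i})>0$, using a refinement of Laufer's algorithm based on the rationality of $\Gamma|_{\calj^*}$ (as in \cite{NOSz}) that achieves $(y_i,E_{v_i})=\chi(E_{v_i})$ at every step, forcing $\chi(y_{i+1})=\chi(y_i)$. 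The sequence must terminate exactly at $x(\ell+1)$: an earlier stop would produce a cycle that also satisfies the defining conditions of $x(\ell+1)$ but is strictly smaller, contradicting \ref{lemF1}(c). The identification $\bH^0((\bR_{\geq 0})^{\bar s},\overline w)=\bH^0(\gamma,-Z_K)$ with $\bar s=1$ then follows from the Reduction Theorem \ref{th:red}: the reduced weight on $\bZ_{\geq 0}$ is $\ell\mapsto\tau(\ell)$, and $\chi$-constancy on each $\gamma^{(\ell)}$ makes the connectivity of $C_\gamma\cap S_n$ depend purely on $\{\ell:\tau(\ell)\leq n\}$.

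For (d), the Riemann--Roch identity $\chi(x(\ell)+E_{v_0})-\chi(x(\ell))=\chi(E_{v_0})-(x(\ell),E_{v_0})$ combined with $\chi(x(\ell+1))=\chi(x(\ell)+E_{v_0})$ from (c) gives $\tau(\ell+1)-\tau(\ell)=1-(x(\ell),E_{v_0})$ under the AR normalization $\chi(E_{v_0})=1$ of \cite{NOSz}. Eventual monotonicity $\tau(\ell+1)\geq\tau(\ell)$ follows from (b): since $(x(\ell),E_{v_0})\to-\infty$, the difference is eventually positive. For (e), the first equality follows from the Reduction Theorem together with (c), yielding $\bH^*(\Gamma,-Z_K)=\bH^0(\gamma,-Z_K)$ as graded $\bZ[U]$-modules; the second is a direct application of formula (\ref{eq:pathweights}) in Lemma \ref{9PC2}: $\chi$-constancy on each $\gamma^{(\ell)}$ leaves only the junction edges $x(\ell)\to x(\ell)+E_{v_0}$ contributing, with $\chi$ jumping from $\tau(\ell)$ to $\tau(\ell+1)$, giving $\sum_{\ell\geq 0}\max\{\tau(\ell)-\tau(\ell+1),0\}$ (and $-w_0(0)=-\tau(0)=0$).

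The chief obstacle is part (c): establishing that a Laufer-type sequence with $\chi$ constant from $x(\ell)+E_{v_0}$ to $x(\ell+1)$ exists and terminates exactly there. This hinges on combining the rationality of $\Gamma|_{\calj^*}$ with the universal minimality of $x(\ell+1)$, and the choice of the $v_i$'s at each step is the delicate point.
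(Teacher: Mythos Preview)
The paper does not contain its own proof of this proposition; it is quoted from \cite{NOSz}. Your reconstruction is in the spirit of that reference and the overall architecture is sound. Two technical points deserve correction, however.

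In (b), the limit $x(\ell)/\ell\to E^*_{v_0}$ is wrong as stated: since $m_{v_0}(x(\ell))=\ell$, any limit must have $E_{v_0}$-coefficient $1$, whereas $m_{v_0}(E^*_{v_0})$ is generally $\neq 1$ (already for a single $(-2)$-vertex one has $x(\ell)=\ell E$ but $E^*=E/2$). The correct limiting direction is $E^*_{v_0}/m_{v_0}(E^*_{v_0})$; since this still has all coefficients strictly positive, your conclusion survives. A cleaner route avoids limits altogether: if some $m_{v_1}(x(\ell))$ were bounded, then the inequality $(x(\ell),E_{v_1})\leq 0$ forces all neighbours of $v_1$ to have bounded coefficient as well, and by connectedness this propagates to $v_0$, contradicting $m_{v_0}(x(\ell))=\ell$.

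In (d), the step ``$(x(\ell),E_{v_0})\to-\infty$ by (b)'' is a genuine gap: writing $(x(\ell),E_{v_0})=\ell\,E_{v_0}^2+\sum_{w\sim v_0}m_w(x(\ell))$, both terms are unbounded with opposite signs, so (b) alone says nothing about the difference. One fixes this by a rate comparison: pick $s\in\calS\cap L$ with $(s,E_{v_0})<0$ (e.g.\ a suitable multiple of $E^*_{v_0}$), set $N=m_{v_0}(s)$, and observe that $ks$ satisfies the defining conditions of $x(kN)$, whence $x(\ell)\leq\lceil\ell/N\rceil\,s$. Substituting gives $(x(\ell),E_{v_0})\leq \tfrac{\ell}{N}(s,E_{v_0})+O(1)\to-\infty$, which then yields the eventual monotonicity $\tau(\ell+1)\geq\tau(\ell)$.
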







\section{The analytic lattice cohomology of surface singularities}\label{ss:NSSAnlattice}
\setcounter{equation}{0}

\subsection{Review  of some analytic properties}\label{ss:AnPrel}

\bekezdes\label{bek:PG}
 Let $(X,o)$ be a normal surface singularity and we fix a good resolution $\phi$.
In this subsection we go over some statements that will help in the later discussion and proofs.
We start with a vanishing theorem.

\begin{theorem}\label{th:GR} {\bf Generalized Grauert--Riemenschneider Theorem.}
\cite{GrRie,Laufer72,Ram,Book}
 Consider a line bundle $\cL\in {\rm Pic}(\widetilde{X})$ such that
$c_1(\cL(Z_K))\in \Delta -\cS_{\Q}$ for some
$\Delta\in L\otimes \Q$ with  $\lfloor \Delta \rfloor =0$.
Then $h^1(l,\cL|_{l})=0$ for any $l\in L_{>0}$.
In particular, $h^1(\widetilde{X},\cL)=0$ too. (Here $\calS_\Q$ denotes  the rational cone generated by $\calS$.)
\end{theorem}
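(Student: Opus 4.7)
The plan is to prove $h^{1}(l,\cL|_{l})=0$ for every $l\in L_{>0}$ by induction on $l$, and to deduce $h^{1}(\widetilde{X},\cL)=0$ from the cycle-level statement via the theorem on formal functions. For the base case $l=E_v$, the curve $E_v$ is smooth and compact, so $h^{1}(E_v,\cL|_{E_v})=0$ is equivalent to $\deg(\cL|_{E_v})\geq 2g_v-1$. Writing $c_1(\cL)=\Delta-s-Z_K$ and using the adjunction identity $(Z_K,E_v)=E_v^2+2-2g_v$, this rewrites as $(c_1(\cL(Z_K)),E_v)=(\Delta-s,E_v)\geq E_v^2+1$. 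Three elementary observations make this immediate: (i) $(\Delta-s,E_v)\in\Z$ since $c_1(\cL(Z_K))\in L'$ and $E_v\in L$; (ii) $(\Delta,E_v)>E_v^2$, because the diagonal contribution $m_v(\Delta)E_v^2$ strictly exceeds $E_v^2$ (as $m_v(\Delta)\in[0,1)$ and $E_v^2<0$) while the off-diagonal terms $\sum_{w\neq v}m_w(\Delta)(E_w,E_v)$ are nonnegative; (iii) $-(s,E_v)\geq 0$ since $s\in\cS_{\Q}$.

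For the inductive step, assume the vanishing for every cycle strictly below $l$, and aim to find $v\in|l|$ satisfying $(c_1(\cL(Z_K))-l,E_v)\geq 1$. Granted such $v$, the standard short exact sequence
\begin{equation*}
0\to\cL(-(l-E_v))|_{E_v}\to\cL|_l\to\cL|_{l-E_v}\to 0
\end{equation*}
yields $h^{1}(l,\cL|_l)\leq h^{1}(E_v,\cL(-(l-E_v))|_{E_v})+h^{1}(l-E_v,\cL|_{l-E_v})$; the first summand vanishes by the same degree computation as in the base case (the inequality $(c_1(\cL(Z_K))-l,E_v)\geq 1$ is precisely what is needed for $\cL(-(l-E_v))$ on $E_v$), and the second by induction. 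The principal obstacle is the combinatorial existence of such a vertex. My approach would be by contradiction: take $l\in L_{>0}$ minimal with $h^{1}(l,\cL|_l)>0$. Minimality together with the short exact sequences applied to each $v\in|l|$ forces $(A,E_v)\geq 0$ for every $v\in|l|$, where $A:=l+s-\Delta$. But $A$ has strictly positive coefficients on $|l|$ (as $m_v(l)\geq 1$, $m_v(\Delta)<1$, $m_v(s)\geq 0$), which should contradict the sign data via the negative definiteness of the intersection form restricted to the sublattice spanned by $|l|$, combined with the structural fact that any nonzero element of the Lipman cone of a connected negative-definite subgraph has strictly positive coordinates. A useful auxiliary identity is the pairing $(A,l)=(l,l)+(s,l)-(\Delta,l)\geq 0$: since $(l,l)<0$ strictly and $(s,l)\leq 0$, one obtains $(\Delta,l)<0$ with a quantitative lower bound, and exploiting $m_v(\Delta)<1$ from $\lfloor\Delta\rfloor=0$ together with the structure of the off-diagonal intersections should make the incompatibility explicit. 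An equivalent formulation, probably cleaner, is to build a computation sequence $0=Z_0<Z_1<\cdots<Z_N$ with $Z_N\geq l$ and with the degree condition $(c_1(\cL(Z_K))-Z_i,E_{v_i})\geq 1$ holding at each step; the base-case inequality $(s-\Delta,E_v)\leq -E_v^2-1$ provides the initial step, and the same kind of negative-definiteness argument should allow one to continue while $Z_i\not\geq l$.

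Finally, once $h^{1}(l,\cL|_l)=0$ is known for every $l>0$, the theorem on formal functions implies that the completion of the stalk $(R^{1}\phi_{*}\cL)_o$ at $o$ equals $\varprojlim_{l}H^{1}(l,\cL|_l)=0$; since $R^{1}\phi_{*}\cL$ is supported at $o$, it vanishes as a sheaf, and the Leray spectral sequence on the Stein germ $(X,o)$ then yields $h^{1}(\widetilde{X},\cL)=0$.
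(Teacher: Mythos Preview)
The paper does not prove this theorem; it is quoted from the literature (Grauert--Riemenschneider, Laufer, Ramanujam), so there is no in-paper argument to compare against. Your overall strategy---induction on $l$ via the short exact sequences $0\to\cL(-(l-E_v))|_{E_v}\to\cL|_l\to\cL|_{l-E_v}\to 0$, followed by formal functions and Leray---is exactly the standard one and is correct.

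That said, the heart of the proof, namely the existence of $v\in|l|$ with $(l+s-\Delta,E_v)<0$, is only sketched, and your hedging (``should contradict'', ``probably cleaner'', ``the same kind of argument should allow'') shows you have not closed it. Here is how to finish it. Assume to the contrary that $(l+s-\Delta,E_v)\geq 0$ for every $v\in|l|$. Since $(s,E_v)\leq 0$, this gives $(l-\Delta,E_v)\geq 0$ on $|l|$. Fix a connected component $V$ of $|l|$ and split $l=l_V+l_{V^c}$, $\Delta=\Delta_V+\Delta_{V^c}$. For $v\in V$ one has $(l_{V^c},E_v)=0$ (different components of $|l|$ are non-adjacent) and $(\Delta_{V^c},E_v)\geq 0$, hence $(l_V-\Delta_V,E_v)\geq 0$ for all $v\in V$. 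Thus $-(l_V-\Delta_V)$ lies in the rational Lipman cone of the connected negative-definite subgraph on $V$; being nonzero it has all coordinates strictly positive, contradicting $m_v(l_V-\Delta_V)=m_v(l)-m_v(\Delta)\geq 1-m_v(\Delta)>0$. This replaces your two tentative paragraphs.

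One small correction: in the base case, $\deg(\cL|_{E_v})\geq 2g_v-1$ is a \emph{sufficient} condition for $h^1(E_v,\cL|_{E_v})=0$ (via Serre duality), not an equivalent one; your proof only uses sufficiency, so the argument is unaffected.
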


In particular, if
$\calL\in {\rm Pic}(\tX)$ and $l\in L_{>0}$ satisfies $l\in c_1(\calL)+Z_K+\calS$,
then $h^1(\tX, \calL(-l))=0$, hence $H^1(\tX, \calL)=H^1(l, \calL|_l)$.

We denote by $\lfloor Z_K\rfloor $ the integral part of $Z_K$, and by
$\lfloor Z_K\rfloor_+ $ its effective part. The above statements imply the following.
If $\lfloor Z_K\rfloor_+=0$ then $p_g=0$.
If $\lfloor Z_K\rfloor_+>0$ then for
 any $Z\geq \lfloor Z_K\rfloor_+$, $Z\in L$,
$p_g=h^1(\cO_{Z})$.

Furthermore,  if $l\in \cS$ and $n\in\Z_{\geq 0}$ such that $nl+\lfloor Z_K\rfloor>0$ then
\begin{equation*}
\dim \frac{H^0(\cO_{\tX})}{H^0(\cO_{\tX}(-\lfloor Z_K\rfloor-nl))}=
\chi(\lfloor Z_K\rfloor+nl)+p_g.
\end{equation*}
This implies that  for $l\in \calS\setminus \{0\}$ and $n\gg 0$,
\begin{equation}\label{eq:vanh0}
\dim \frac{H^0(\cO_{\tX})}{H^0(\cO_{\tX}(-nl))}=
-\frac{n^2l^2}{2}+\mbox{lower order terms in $n$}.
\end{equation}

For certain cycles the Grauert--Riemenschneider Theorem \ref{th:GR} can be improved.

\begin{proposition}\label{prop:VAN0}\ {\bf Lipman's Vanishing Theorem.} \cite[Theorem 11.1]{Lipman}, \cite{Book}\
Take  $l\in L_{>0}$ with  $h^1(\cO_l)=0$  and $\cL\in {\rm Pic}(\tX)$ for which
 $(c_1\cL,E_v)\geq 0$ for any $E_v$ in the support of\, $l$. Then  $h^1(l,\cL)=0$.
 \ix{Vanishing Theorem!Lipman's|textbf}
\end{proposition}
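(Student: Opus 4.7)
The plan is to induct on $l$ along a carefully chosen increasing computation sequence, reducing the vanishing of $H^1(\cL|_l)$ to a curve-theoretic calculation on each individual component of $|l|$.

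The first step is to invoke the classical Laufer-type characterization of cycles with vanishing $h^1(\cO_l)$: the hypothesis $h^1(\cO_l)=0$ produces an increasing sequence $0 < l_1 < l_2 < \cdots < l_t = l$ with $l_1 = E_{v(0)}$ and $l_{i+1}=l_i+E_{v(i)}$, where each $E_{v(i)} \subset |l|$ is rational and $(l_i, E_{v(i)}) \leq 1$ at every step. Indeed, $\chi(\cO_{l+E})-\chi(\cO_l)=1-g_v-(l,E_v)$, so these conditions force $\chi$ to be non-decreasing, and since $\chi(\cO_l)=1$ starts and ends the sequence, every intermediate $h^1(\cO_{l_i})$ must also vanish.

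Given such a sequence, I twist the standard exact sequence by $\cL$ to get
\[
 0 \to \cL|_{E_{v(i)}}(-l_i) \to \cL|_{l_{i+1}} \to \cL|_{l_i} \to 0.
\]
The sheaf $\cL|_{E_{v(i)}}(-l_i)$ lives on a rational curve and has degree
\[
(c_1\cL, E_{v(i)}) - (l_i, E_{v(i)}) \geq 0 - 1 = -1 = 2g_{v(i)}-1,
\]
where the first inequality uses the nefness hypothesis on $\cL$ along $|l|$ and the second uses the Laufer bound. Hence $H^1(\cL|_{E_{v(i)}}(-l_i))=0$. The long exact sequence in cohomology (with $H^2$ vanishing since we are on a curve) then yields an isomorphism $H^1(\cL|_{l_{i+1}}) \xrightarrow{\sim} H^1(\cL|_{l_i})$. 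The base case $l_1 = E_{v(0)}$ is immediate: $\deg \cL|_{E_{v(0)}} \geq 0 \geq 2g_{v(0)}-1$, so $H^1(\cL|_{E_{v(0)}})=0$. Descending along the sequence gives $H^1(l, \cL|_l)=0$.

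The main technical obstacle is the Laufer sequence of Step 1, which is a nontrivial combinatorial induction inside the effective lattice supported on $|l|$; once it is secured, the rest of the argument is a mechanical application of the short exact sequence machinery. It is worth noting that the hypothesis $(c_1\cL, E_v)\geq 0$ is used exactly at the point where we need the degree bound $2g_{v(i)}-1$ to be beaten on each rational curve of the sequence, and the Laufer bound $(l_i, E_{v(i)})\leq 1$ is precisely what makes the nefness of $\cL$ sufficient (rather than requiring strict positivity).
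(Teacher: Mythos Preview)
The paper does not supply its own proof of this proposition; it is quoted with references to Lipman's original article and to \cite{Book}. Your argument is correct and is essentially the standard proof (and indeed the same computation-sequence technique the paper itself uses later, e.g.\ in the proof of Lemma~\ref{lem:AnNodes}).

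Two minor remarks. First, your sentence ``since $\chi(\cO_l)=1$ starts and ends the sequence'' is not quite right: $\chi(l_1)=1$, but $\chi(l)=h^0(\cO_l)$ need not equal $1$. Fortunately you do not actually need the intermediate vanishing $h^1(\cO_{l_i})=0$ for the induction on $\cL$; what you need is only the \emph{existence} of the sequence with $(l_i,E_{v(i)})\leq 1$. That existence is the content of the ``classical Laufer-type characterization'' you invoke: from $h^1(\cO_l)=0$ one gets $h^1(\cO_{l'})=0$ for every $0<l'\leq l$ (via the surjection $\cO_l\twoheadrightarrow\cO_{l'}$), hence $\chi(l')\geq 1$; then the backward construction (choose $E_u\subset|x_i|$ with $\chi(x_i-E_u)\leq\chi(x_i)$, else derive $\chi(x_i)\leq 0$) produces the sequence. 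This is exactly the argument the paper spells out in the proof of Lemma~\ref{lem:AnNodes}. Second, your claim that the long exact sequence gives an \emph{isomorphism} $H^1(\cL|_{l_{i+1}})\cong H^1(\cL|_{l_i})$ is in fact correct (injectivity from $H^1$ of the kernel vanishing, surjectivity from $H^2=0$ on curves), though for the induction you only need that the middle term is trapped between two zeros.
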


\bekezdes
\label{rem:antmatroid} \cite[4.8]{MR} The set
$L_{p_g}:=\{l\in L_{>0}\, : \, h^1(\cO_l)=p_g\}$ has a unique minimal element, denoted by $Z_{coh}$, and called
the {\it cohomological cycle} of $\phi$. It has the property that
 $h^1(\cO_l)<p_g$
for any $l\not\geq Z_{coh}$ ($l>0$).

By the consequences of Theorem \ref{th:GR} we obtain that $Z_{coh}\leq  \lfloor Z_K\rfloor_+$.

In fact, the proof of the above statement in \cite{MR}  shows the following.
Let $l_1, l_2\in L_{>0}$ be effective cycles, set
$l=\min\{l_1, l_2\}$ and $\overline{l}=\max\{l_1,l_2\}$. Then
$h^1(\calO_{\overline{l}})+h^1(\calO_l)\geq h^1(\calO_{l_1})+h^1(\calO_{l_2})$.  We will refer
to this inequality as  the {\it `opposite' matroid rank inequality} of $h^1$.

In particular, for any $l\in L_{>0}$ we have $h^1(\calO_l)=h^1(\calO_{\min \{l,Z_{coh}\}})$.

\bekezdes\label{bek:LauferDual} \cite{Laufer72},
\cite[p. 1281]{Laufer77}
Following Laufer, we can identify the dual space $H^1(\tX,\cO_{\tX})^*$ with the space of global holomorphic
2-forms on $\tX\setminus E$ up to the subspace of those forms which can be extended
holomorphically over $\tX$:
$H^1(\tX,\cO_{\tX})^*\simeq 
H^0(\tX\setminus E,\Omega^2_{\tX})/ H^0(\tX,\Omega^2_{\tX})$. 
Here $H^0(\tX\setminus E,\Omega^2_{\tX})$ can be replaced by
$H^0(\tX,\Omega^2_{\tX}(Z))$ for
any $Z>0$ with  $h^1(\cO_Z)=p_g$. Indeed, for any  $Z>0$, from the exact sequence of sheaves
$0\to\Omega^2_{\tX}\to \Omega^2_{\tX}(Z)\to \calO_{Z}(Z+K_{\tX})\to 0$ (where $\Omega^2_{\tX}=\cO_{\tX}(K_{\tX})$)
and from the vanishing
$h^1(\Omega^2_{\tX})=0$ and Serre duality
\begin{equation}\label{eq:duality}
H^0(\Omega^2_{\tX}(Z))/H^0(\Omega^2_{\tX})=H^0(\calO_Z(Z+K_{\tX}))\simeq H^1(\calO_Z)^*.
\end{equation}
If  $H^1(\calO_Z)\simeq H^1(\calO_{\tX})$ then   the inclusion
$H^0(\Omega^2_{\tX}(Z))/H^0( \Omega^2_{\tX})\hookrightarrow
H^0(\tX\setminus E, \Omega^2_{\tX})/H^0(\Omega^2_{\tX})$
is an isomorphism.

\bekezdes \label{bek:San}
For each holomorphic function $f\in \cO_{X,o}$ consider
${\rm div}_E(f)\in \cS$, that part of the divisor of $f\circ \phi$ which is supported on $E$.
Then define  $\cS_{an}\subset \calS$, the {\it analytic semigroup of $\phi$},  as
$\{{\rm div}_E(f)\,:\, f\in \cO_{X,o}\}$.

The monoid $\calS_{an}\setminus \{0\}$ has a unique minimal element
$Z_{max}$ (${\rm div}_E$ of the generic element of the maximal ideal of $\cO_{X,o}$).
It is called {\it the maximal ideal cycle} of $\phi$. One has $Z_{max}\geq Z_{min}>0$.

\subsection{The analytic lattice cohomology, independence  of the choice of the  rectangle}\label{ss:anR}

\bekezdes
Next we  construct the {\it analytic lattice cohomology } $\bH^*_{an,0}(X,o)$
of a normal surface singularity $(X,o)$.
(Though we will mention in the sequel nothing
about the spin$^c$--structures,
this module in fact corresponds to the canonical spin$^c$--structure of the link.
If the link is a rational homology sphere,  the construction of the
analytic lattice cohomologies $\{\bH^*_{an, h}(X,o)\}_{h}$
associated with all spin$^c$--structures will be
presented in the forthcoming  article \cite{AgNeIII}.)

Let us fix a good resolution $\phi$. 

For any $c\in L$, $c\geq Z_{coh}$,  we consider the rectangle $R(0,c)=\{l\in L\,:\,
0\leq l\leq c\}$.
Here we might  consider the $c=\infty$ case too, in this case $R(0,c)=L_{\geq 0}$.
Then we consider the multivariable  Hilbert function
$$\hh:R(0,c)\to\Z, \ \  \hh(l)=\dim H^0(\calO_{\tX})/H^0(\calO_{\tX}(-l))$$
associated with the
divisorial filtration of $\calO_{X,o}$ and the resolution $\phi$, cf. \cite{CDG,CHR,NJEMS}.
Clearly $\hh$ is increasing (that is, $\hh(l_1)\geq \hh(l_2)$ whenever $l_1\geq l_2$) and  $\hh(0)=0$.

Next, for any $l\in R(0,c)$, we consider the function
$\hh^\circ(l)=p_g-h^1(\calO_l)$ too (where, by definition,  $h^1(\calO_{l=0})=0$). Then  $\hh^\circ$ is decreasing,
 $\hh^\circ (0)=p_g$  and $\hh^\circ (c)=0$, cf. \ref{bek:PG}. (Compare with the notations of section \ref{ss:CombLattice}.)

We consider the natural cube-decomposition of $R(0,c)$
 (where the 0-cubes  are the lattice points)  and
the set of cubes $\{\calQ_q\}_{q\geq 0}$ of $R(0,c)$ as in \ref{9complex}.
Then we define the weight function
\begin{equation}\label{eq:wean}
w_0:\calQ_0\to \Z, \ \ \  w_0(l)=\hh(l)+\hh^\circ (l)-p_g=\hh(l)-h^1(\calO_l).
\end{equation}
Clearly,   $w_0(0)=0$.
This weight function has several useful properties.

First of all, note that $0\leq \hh^\circ(l)\leq p_g$ for every $l$, hence when $c=\infty$ then
$\hh$ and $w_0$ have comparable  asymptotic behaviour for $l\gg 0$.
For any $l\in L$ let $s(l)\in L$  be the smallest element with $s(l)\geq l$ and $s(l)\in\calS$. Then
$\hh(l)=\hh(s(l))$. Using this fact, the monotonicity of $\hh$,  and
(\ref{eq:vanh0})
 a computation shows that  $w_0$ satisfies the requirement \ref{9weight}(a), namely,
 $w_0^{-1}((\-\infty, n]) \ \ \mbox{is finite for any $n\in\Z$}$.

 Next,  since $\hh$ is induced by a filtration, it satisfies the
matroid rank inequality
$\hh(l_1)+\hh(l_2)\geq  \hh(\overline{l})+\hh(l)$,
where $l=\min\{l_1, l_2\}$ and $\overline{l}=\max\{l_1,l_2\}$.
On the other hand, $h^1$ satisfies the `opposite' matroid rank inequality, see
\ref{rem:antmatroid}. Therefore, $w_0$ itself satisfies the matroid rank inequality (where $l_1,l_2\geq 0$)
  \begin{equation}\label{eq:matroidw00}
 w_0(l_1)+w_0(l_2)\geq  w_0(\overline{l})+w_0(l).
 \end{equation}
(As a topological comparison:  $\chi$, the topological weight function,  also satisfies a similar inequality.)

Furthermore, similarly as in \ref{9dEF1}, we define
$w_q:\calQ_q\to \Z$ by $ w_q(\square_q)=\max\{w_0(l)\,:\, l \
 \mbox{\,is any vertex of $\square_q$}\}$.
 In the sequel  we write $w$ for the system $\{w_q\}_q$ if there is no confusion.
 These  weight functions $\{w_q\}_q$ define the lattice cohomology $\bH^*(R(0,c),w)$ and the graded root
 $\RR(R(0,c),w)$
associated with $R(0,c)$ and $w$.

\begin{lemma}\label{lem:INDEPAN}
$\bH^*(R(0,c),w)$  and  $\RR(R(0,c),w)$ are  independent of the choice of $c\geq Z_{coh}$.
\end{lemma}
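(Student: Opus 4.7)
The plan is to show that for any two lattice points $c_1\le c_2$ with $c_i\ge Z_{coh}$, the inclusion $R(0,c_1)\hookrightarrow R(0,c_2)$ induces, on each sublevel set $S_n$, a strong deformation retract $S_n\cap R(0,c_2)\to S_n\cap R(0,c_1)$ that is natural in $n$. The general case of arbitrary $c_1,c_2\ge Z_{coh}$ (including the $c=\infty$ case, treated as a direct limit) then reduces to this by comparing both choices with their supremum $c_1\vee c_2$. By induction on $\sum_v(c_{2,v}-c_{1,v})$ it suffices to treat the one-step case $c_2=c_1+E_v$. There $R(0,c_2)$ is obtained from $R(0,c_1)$ by attaching a ``slab'' of cubes over the top face $F^-=\{l\le c_1:\, l_v=c_{1,v}\}$; I would use the natural retraction $r(l)=l\wedge c_1$ together with the cube-wise straight-line homotopy $H_t(x)_v=(1-t)x_v+t\min(x_v,c_{1,v})$, $H_t(x)_w=x_w$ for $w\ne v$, which keeps every point inside its containing slab cube.

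The decisive step is the weight inequality $w_0(l)\ge w_0(l-E_v)$ for every lattice point $l$ in the top face of the new slab (so $l_v=c_{1,v}+1$ and $l_w\le c_{1,w}$ for $w\ne v$). For such an $l$ one has $l\vee c_1=c_2$ and $l\wedge c_1=l-E_v$, so the matroid rank inequality (\ref{eq:matroidw00}) applied to the pair $(l,c_1)$ reads
$$w_0(l)+w_0(c_1)\ \ge\ w_0(c_2)+w_0(l-E_v).$$
Since $c_1,c_2\ge Z_{coh}$, the property of the cohomological cycle recalled in \ref{rem:antmatroid} gives $h^1(\calO_{c_1})=h^1(\calO_{c_2})=p_g$, hence $w_0(c_i)=\hh(c_i)-p_g$; monotonicity of $\hh$ then yields $w_0(c_2)\ge w_0(c_1)$. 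Substituting gives the desired $w_0(l)\ge w_0(l-E_v)$.

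Applied vertex by vertex, this shows that for every slab cube $\square$ with bottom face $\square^-\subset R(0,c_1)$ and top face $\square^+\subset F:=\{l\le c_2:\, l_v=c_{1,v}+1\}$, one has $w(\square)=w(\square^+)\ge w(\square^-)$. Consequently $\square\subset S_n\cap R(0,c_2)$ iff $\square^+\subset S_n$, and in either case $\square^-\subset S_n\cap R(0,c_1)$; so the homotopy $H_t$ carries $S_n\cap R(0,c_2)$ into itself and terminates at $r$, yielding the required strong deformation retract. By functoriality in $n$ this produces a $\Z[U]$-module isomorphism $\bH^*(R(0,c_2),w)\cong\bH^*(R(0,c_1),w)$ and an isomorphism of the associated graded roots. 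The main obstacle is the weight inequality above: it is exactly here that the matroid property of $w_0$ meets the hypothesis $c_i\ge Z_{coh}$ -- beyond $Z_{coh}$ the contribution of $h^1$ has saturated to $p_g$ and $w_0$ is controlled by the monotone $\hh$, which is what lets the retraction lower weights.
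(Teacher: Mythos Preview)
Your proof is correct and follows the same overall strategy as the paper: reduce to the one-step case $c_2=c_1+E_v$ and exhibit a deformation retract of $S_n\cap R(0,c_2)$ onto $S_n\cap R(0,c_1)$ in the $E_v$-direction, the key point being the inequality $w_0(l)\ge w_0(l-E_v)$ for $l$ on the top face. The only difference is how this inequality is obtained: the paper observes directly that $\min\{l,Z_{coh}\}=\min\{l-E_v,Z_{coh}\}$ (since $l_v=c_v>(Z_{coh})_v$), hence $h^1(\calO_l)=h^1(\calO_{l-E_v})$ by \ref{rem:antmatroid}, so the difference $w_0(l)-w_0(l-E_v)=\hh(l)-\hh(l-E_v)\ge 0$; you instead apply the matroid inequality (\ref{eq:matroidw00}) to the pair $(l,c_1)$ and use the saturation $h^1(\calO_{c_i})=p_g$. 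Both arguments are valid; the paper's route is slightly sharper (it yields the equality $h^1(\calO_l)=h^1(\calO_{l-E_v})$, not just the weight inequality), but for the lemma at hand either suffices.
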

\begin{proof}
Fix some $c\geq Z_{coh}$ and choose $E_v\subset |c- Z_{coh}|$.
Then for any $l\in R(0,c)$ with
$l_v=c_v$ we have $\min\{l, Z_{coh}\}=\min\{l-E_v, Z_{coh}\}$.
Therefore, by \ref{rem:antmatroid}, $h^1(\calO_{l-E_v})=h^1(\calO_l)$, thus
$w_0(l-E_v)\leq w_0(l)$. Then for any $n\in \Z$, a strong deformation retract in the direction $E_v$ realizes
a homotopy equivalence between the spaces $S_n\cap R(0,c)$ and  $S_n\cap R(0,c-E_v)$.
A retract $r:S_n\cap R(0,c)\to S_n\cap R(0,c-E_v)$ can be defined as follows (for notation see \ref{9complex}).
If $\square =(l,I)$ belongs to $ S_n\cap R(0,c-E_v)$ then $r$ on $\square$ is defined as the identity.
If $(l,I)\cap  R(0,c-E_v)=\emptyset$, then $l_v=c_v$, and  we set  $r(x)=x-E_v$. Else,
$\square =(l,I)$ satisfies $v\in I$ and $l_v=c_v-1$. Then we retract $(l,I)$ to $(l, I\setminus v)$ in the $v$--direction.
The strong deformation retract is defined similarly.
\end{proof}

\begin{corollary}\label{cor:veges}
(a) The graded root $\RR(R(0,c),w)$ satisfies $|\chic^{-1}(n)|=1$ for any $n\gg 0$.

(b) $\bH^*_{red}(R(0,c),w)$ is a finitely generated $\Z$-module (for any finite or infinite $c\geq Z_{coh}$).
\end{corollary}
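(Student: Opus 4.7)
The plan is to invoke Lemma~\ref{lem:INDEPAN} to reduce the problem to a finite rectangle and then to use elementary topology of finite cubical complexes. Specifically, I would take $c=Z_{coh}$, which is a finite lattice point, so that $R(0,Z_{coh})$ is a bounded rectangle containing only finitely many lattice points and hence finitely many cubes. In particular, $w_0$ takes only finitely many values on $R(0,Z_{coh})\cap L$; set
$$N:=\max\{w_0(l)\,:\,0\leq l\leq Z_{coh}\}.$$

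For part~(a), whenever $n\geq N$ the sublevel set $S_n$ coincides with the entire rectangle $R(0,Z_{coh})$, which is a product of closed intervals in $\R^s$ and hence contractible; in particular, connected. Therefore $|\chic^{-1}(n)|=|\pi_0(S_n)|=1$ for all $n\geq N$.

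For part~(b), I would decompose
$$\bH^q_{red}(R(0,Z_{coh}),w)=\bigoplus_{n\geq m_w}\widetilde{H}^q(S_n,\Z),$$
and observe that summands vanish outside the finite range $m_w\leq n<N$: for $n<m_w$ the space $S_n$ is empty, while for $n\geq N$ it is contractible. Each surviving $S_n$ is a finite subcomplex of the finite cubical complex $R(0,Z_{coh})$, so $\widetilde{H}^q(S_n,\Z)$ is a finitely generated abelian group. Only finitely many cohomological degrees $q$ contribute (since $\dim R(0,Z_{coh})\leq s$), so $\bH^*_{red}$ is a finite direct sum of finitely generated $\Z$-modules, hence itself finitely generated.

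The only point requiring mild care is confirming that Lemma~\ref{lem:INDEPAN} also handles the case $c=\infty$. For finite $c$ one shrinks by one basis vector at a time, as in the lemma's proof; for $c=\infty$ I would invoke the finiteness of $w_0^{-1}((-\infty,n])$ (established while verifying property~\ref{9weight}(a) for $w_0$) to conclude that each $S_n\subset L_{\geq 0}$ is already contained in some finite sub-rectangle $R(0,c_0)$. Hence the inclusion $R(0,c_0)\hookrightarrow L_{\geq 0}$ restricts to the identity on $S_n$ for $c_0$ sufficiently large, yielding the required isomorphism of lattice cohomologies and graded roots, and reducing the infinite case to the finite one treated above.
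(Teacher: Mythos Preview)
Your argument is correct and follows essentially the same route as the paper: for a finite rectangle $R(0,c)$ one has $S_n=R(0,c)$ for all sufficiently large $n$, and the contractibility of the rectangle yields both (a) and (b). The paper's proof is a one-line version of this, and your added discussion of the $c=\infty$ case (using finiteness of $w_0^{-1}((-\infty,n])$ to trap each $S_n$ in a finite sub-rectangle and then invoking Lemma~\ref{lem:INDEPAN}) makes explicit a step the paper leaves implicit.
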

\begin{proof}
For any $n\gg 0$ we have $R(0,c)=S_n$, hence $S_n$ is contractible for such $n$.
\end{proof}

\subsection{The analytic lattice cohomology of $(X,o)$, independence  of $\phi$}\label{ss:anphi}

\bekezdes Let us abridge   $\bH^*(R(0,c),w)$ as $\bH^*_{an}(\phi)$ and
 $\RR(R(0,c),w)$ as $\RR_{an}(\phi)$.
\begin{theorem}\label{th:annlattinda} Assume that the resolution graph is a tree (a property independent of the
resolution). Then
$\bH^*_{an}(\phi)$  and $\RR_{an}(\phi)$ are  independent of the choice of the resolution $\phi$.
\end{theorem}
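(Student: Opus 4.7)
The plan is to reduce to a single blow-up and then exhibit a weight-preserving cellular deformation retract, in the spirit of the Topological Reduction Theorem \ref{th:red}. Any two good resolutions of $(X,o)$ are dominated by a common good resolution, connected to each through sequences of blow-ups of points on the exceptional divisor; the tree hypothesis ensures that all the intermediate resolution graphs remain trees. Using Lemma \ref{lem:INDEPAN} to handle the choice of rectangle freely, it suffices to fix a good resolution $\phi : \tX \to X$ and a single blow-up $\pi : \tX' \to \tX$ at a point $p$ of the exceptional divisor (with $\phi' := \phi \circ \pi$ still good) and to show that $\bH^*_{an}(\phi') \cong \bH^*_{an}(\phi)$ and $\RR_{an}(\phi') \cong \RR_{an}(\phi)$. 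Write $E_0 \subset \tX'$ for the new exceptional component; identify $L$ as the sublattice of $L' = L \oplus \Z E_0$ spanned by the strict transforms of the $E_v$, and distinguish two cases: $p$ a smooth point of a unique $E_{v_0}$, or $p = E_{v_1} \cap E_{v_2}$.

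The technical core is the identity $w_0(l')_{\tX'} = w_0(\rho(l'))_{\tX}$ for every $l' = l + kE_0 \in L'_{\geq 0}$, where in the smooth-point case one sets $\rho(l + kE_0) := l + \max(0,\, k - m_{v_0}(l))\, E_{v_0}$, and an analogous (but more delicate) retraction is defined in the node case. The identity splits into the Hilbert part $\hh_{\tX'}(l') = \hh_{\tX}(\rho(l'))$, which follows from $\mathrm{ord}_{E_0}(f\circ\phi') = \sum_{E_v \ni p}\mathrm{ord}_{E_v}(f\circ\phi)$, recasting the $E_0$-vanishing condition as a condition on the components through $p$; and the $h^1$ part $h^1(\calO_{l'})_{\tX'} = h^1(\calO_{\rho(l')})_{\tX}$, obtained by applying $\pi_*$ to $0 \to \calO_{\tX'}(-l') \to \calO_{\tX'} \to \calO_{l'} \to 0$ and using $\pi_*\calO_{\tX'} = \calO_{\tX}$, $R^1\pi_*\calO_{\tX'} = 0$, together with the identification of the skyscraper $R^1\pi_*\calO_{\tX'}(-l')$ (whose stalk at the image of $p$ is $H^1(E_0 \cong \bP^1,\, \calO_{\tX'}(-l')|_{E_0})$) as contributing only to $h^0$, not to $h^1$, via the Leray spectral sequence.

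Once the identity is in hand, each fiber $\rho^{-1}(l) \subset L'_{\geq 0}$ is a contractible ``L-shape'' of $0$- and $1$-cubes, and $\rho$ extends to a cellular weight-preserving retraction of a sufficiently large rectangle $R(0, c') \subset L'_{\geq 0}$ onto the image of the inclusion $\iota : R(0, c) \hookrightarrow L'_{\geq 0}$, $\iota(l) = (l, 0)$, with $c := \rho(c') \geq Z_{coh}(\tX)$. Because $w_0$ is constant on the fibers of $\rho$, the retract preserves every sublevel set $S_n(w)$ and thereby induces the asserted isomorphisms of $\bH^*_{an}$ and of $\RR_{an}$, using Proposition \ref{th:HHzero} for the graded-root part. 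The main obstacle is the $h^1$ identity in the node case: when $k > m_{v_1}(l) + m_{v_2}(l)$, the pushforward $\pi_*\calO_{\tX'}(-l')$ is no longer the ideal sheaf of a Cartier divisor but of a genuine monomial ideal (an intersection of $\calO_{\tX}(-(l + aE_{v_1} + bE_{v_2}))$ over distributions $a+b = k - m_{v_1} - m_{v_2}$ with $a,b \geq 0$), so both the definition of $\rho$ and the cancellation of $R^1\pi_*\calO_{\tX'}(-l')$ at the level of $h^1$ require a careful case analysis along the $E_0$-direction.
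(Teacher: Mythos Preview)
Your central identity $w_0(l')_{\tX'} = w_0(\rho(l'))_{\tX}$ is false, and the error is already in the Hilbert part. The formula $\mathrm{ord}_{E_0}(f\circ\phi') = \sum_{E_v \ni p}\mathrm{ord}_{E_v}(f\circ\phi)$ is the pullback rule for cycles supported on $E$, not for divisors of arbitrary functions: if the strict transform of $\mathrm{div}(f\circ\phi)$ (the part not supported on $E$) passes through $p$, it contributes extra vanishing along $E_0$. Concretely, take $l=0$ and $k>0$ in the smooth-point case; your $\rho(kE_0)=kE_{v_0}$, and you would be asserting that functions on $\tX$ vanishing to order $\geq k$ at the point $p$ form the same space as functions vanishing to order $\geq k$ along all of $E_{v_0}$. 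These spaces are in general different, so $\hh_{\tX'}(kE_0)\neq \hh_{\tX}(kE_{v_0})$. In fact the paper's computation (see (\ref{eq:MON1})--(\ref{eq:HOM2a})) shows that along the fiber over $x$, the function $a\mapsto w_0'(\pi^*x+aE_{new})$ is only \emph{monotone} on each side of $a\in\{-1,0\}$ and equals $w_0(x)$ precisely at those two values; it is not constant on the fiber, so no weight-preserving retraction to $L$ exists.

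The paper's argument replaces your equality by this two-sided monotonicity: from it one sees that $\pi_{\R}^{-1}(x)\cap S_n(\phi')$ is an interval containing $\pi^*x$, and one then checks cube-by-cube that for a cube $(x,I)\subset S_n(\phi)$ there is a lift in $S_n(\phi')$ (namely $(\pi^*x,I)$ in the smooth-point case) projecting isomorphically onto it, so that $\pi_{\R}:S_n(\phi')\to S_n(\phi)$ has contractible fibers over relative interiors of cubes and is a homotopy equivalence. The node case is genuinely harder: two candidate lifted cubes $(\pi^*x,I)$ and $(\pi^*x+E_{new},I)$ each have one potentially bad vertex, and one shows (using the matroid inequality of $\hh'$ and the opposite matroid inequality of $h^1$) that at least one of them works unless a certain strict inequality $h^1(\calO_{E_I+E_{new}})>h^1(\calO_{E_I})$ holds---and it is exactly here, ruling out this last case, that the tree hypothesis is used. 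Your Leray/$R^1\pi_*$ sketch does not engage with this obstruction.
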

(Later in \ref{bek:ADD} we will drop the assumption regarding the graph.)
\begin{proof}
Let us fix a resolution $\phi$, and denote the blow up of a point of $E_{v_0}\setminus \cup_{w\not= v_0}E_w$
by $\pi$, and set $\phi':= \phi\circ \pi$.
Let $\Gamma$ and $\Gamma'$ be the corresponding graphs, $L(\Gamma),\ L(\Gamma')$  the lattices
and  $\, (\,,\,), \ (\,,\,)'$ the  intersection forms.

 We denote the new
$(-1)$-vertex of $\Gamma'$ by $E_{new}$.  We  use the same notation  for  $E_v\in L$ and for
its strict transform in $L'$. We have the  natural morphisms:
 $\pi_*:L(\Gamma')\to
L(\Gamma)$ defined by $\pi_*(\sum x_vE_v+x_{new}E_{new})=\sum
x_vE_v$, and $\pi^*:L(\Gamma)\to L(\Gamma') $  defined by
$\pi^*(\sum x_vE_v)=\sum x_vE_v +x_{v_0}E_{new}$. Then
$(\pi^*x,x')'=(x,\pi_*x')$.  Thus
$(\pi^*x,\pi^*y)'=(x,y)$ and $(\pi^*x,E_{new})'=0$ for any
$x,y\in L(\Gamma)$.
Associated with $\phi$,  let $\hh$  be the
 Hilbert function, $w_0$ the analytic weight and
 $S_n(\phi)=\cup\{\square\,:\, w(\square)\leq n\}$. We use similar notations  $\hh'$, $w_0'$ and
 $S_n(\phi')$ for  $\phi'$.

 Note that for any $x\in R$,
   $H^0(\tX', \calO_{\tX'}(-\pi^*x))=H^0(\tX, \calO_{\tX}(-x))$.
   Even more, for $a\leq 0$ and $x$ as above,
    $H^0(\tX', \calO_{\tX'}(-\pi^*x-aE_{new}))=H^0(\tX', \calO_{\tX'}(-\pi^*x))$.
   Indeed, take the exact sequence of sheaves
   $$0\to \calO_{\tX'}(-\pi^*x)\to \calO_{\tX'}(-\pi^*x-aE_{new})\to \calO_{-aE_{new}}(-\pi^*x-aE_{new})\to 0$$
   and use that $h^0(\calO_l(l)\otimes \calL)=0$ for any $l>0$ and line bundle $\calL$ with
    $(c_1\calL,E_v)=0$ for any $E_v\in |l|$.
   This last vanishing follows from the Grauert--Riemenschneider Theorem via Serre duality.
Therefore,
\begin{equation}\label{eq:MON1}
 \hh'(\pi^*x+aE_{new}) \ \left\{ \begin{array}{l}
 = \hh(x) \ \mbox{ for any $a\leq 0$} \\
 \mbox{is increasing for $a\geq 0$}.\end{array}\right.
\end{equation}
Using the exact sequence $0\to \calO_{aE_{new}}(-\pi^*x)\to \calO_{\pi^*x+aE_{new}}\to \calO_{\pi^*x}\to 0$ and
Lipman's vanishing $h^1(\calO_{aE_{new}}(-\pi^*x))=0$ from \ref{prop:VAN0}, we get that
$h^1(\calO_{\pi^*x+aE_{new}})=h^1(\calO_{\pi^*x})$ for any $a\geq 0$. Furthermore, from
$0\to \calO_{E_{new}}(-\pi^*x +E_{new})\to \calO_{\pi^*x}\to \calO_{\pi^*x-E_{new}}\to 0$
we get that $h^1(\calO_{\pi^*x-E_{new}})=h^1(\calO_{\pi^*x})$ too. On the other hand  (by Leray spectral sequence),
$h^1(\tX',\calO_{\pi^*x})=h^1(\tX,\calO_x)$. Therefore,
 \begin{equation}\label{eq:MON2}
 h^1(\calO_{\pi^*x+aE_{new}}) \ \left\{ \begin{array}{l}
  \mbox{is increasing  for $a\leq  -1$}, \\
= h^1(\calO_x) \ \mbox{ for any $a\geq  -1$}. \\
\end{array}\right.
\end{equation}
These combined provide
\begin{equation}\label{eq:HOM2a}
a\mapsto w_0'(\pi^*x+aE_{new}) \ \left\{ \begin{array}{l}
\mbox{is  decreasing for $a\leq  -1$},\\
 = w_0(x) \ \mbox{ for  $a= -1$ and $a=0$,} \\
 \mbox{is increasing for $a\geq 0$}.\end{array}\right.
\end{equation}
Recall that we can compute $\bH^*_{an}(\phi)$ using the cube
$R(0,c)$ with $c\geq Z_{coh}(\phi)$, cf. \ref{lem:INDEPAN}. But then $\pi^*c\geq Z_{coh}(\phi')$,
hence $\bH^*_{an}(\phi')$ can be computed in $R(0, \pi^*c)$,
and $\pi^*$ sends  the lattice points of $R(0,c)$  into $R(0, \pi^*c)$.

Furthermore, if $w_0'(\pi^*x+aE_{new})\leq n$, then
$w_0(x)\leq n$ too. In particular, the projection $\pi_{\R}$ in the direction of $E_{new}$
induces a well-defined map $\pi_{\R}:S_n(\phi')\to S_n(\phi)$.
We claim that (whenever $S_n(\phi)$ is non-empty)
$\pi_{\R}$  is a   homotopy equivalence  (with all fibers non-empty and contractible).
\bekezdes \label{bek:proof1}
We proceed in two steps.
First we prove that $\pi_{\R}:S_n(\phi')\to S_n(\phi)$ is onto.


Consider a zero dimensional cube (i.e. lattice point) $x\in S_n(\phi)$. Then $w_0(x)\leq n$. But then $w_0'(\pi^*x)
=w_0(x)\leq n$ too, hence $\pi^*(x)\in S_n(\phi')$ and $x=\pi_{\R}(\pi^*x)\in {\rm im}(\pi_{\R})$.

Next take  a cube $(x,I)\subset  S_n(\phi)$ ($I\subset \cV$). This means that
$w_0(x+E_{I'})\leq n$ for any $I'\subset I$. But then
\begin{equation}\label{eq:eps}
\pi^*(x+E_{I'})=\pi^*x+ E_{I'}+\epsilon\cdot E_{new},
\end{equation}
where $\epsilon=0$ if $v_0\not \in I'$ and $\epsilon =1$ otherwise. Hence
\begin{equation}\label{eq:eps2}
w_0'(\pi^*x+E_{I'}) =w_0'(\pi^*(x+E_{I'})-\epsilon E_{new})\stackrel {(\ref{eq:HOM2a})}{=}
w_0(x+E_{I'})\leq n.
\end{equation}
Therefore $(\pi^*x,I)\in S_n(\phi')$ and $\pi_{\R}$ projects $(\pi^*x, I)$  isomorphically  onto $(x,I)$.

Next, we show that $\pi_{\R}$ is in fact a homotopy equivalence.
In order to prove this fact it is enough to verify that if
$\square\in S_{n}(\phi)$ and $\square ^\circ$ denotes its relative interior,
then $\pi_{\R}^{-1}(\square^\circ) \cap S_{n}(\phi')$ is contractible.

Let us start again with a lattice point $x\in S_n(\phi)$. Then $\pi_{\R}^{-1}(x)\cap S_n(\phi')$ is a real  interval
(whose endpoints are lattice points, considered in the real line of the $E_{new}$ coordinate).
Let us denote it by $\cI(x)$. Now, if $\square=(x,I)$, then we have to show that
all the intervals $\cI(x+E_{I'})$ associated with all the subsets $I'\subset I$ have a common
lattice point. But this is exactly what we verified above: the $E_{new}$ coordinate of $\pi^*(x)$ is such a common
point. Therefore, $\pi_{\R}^{-1}(\square ^\circ)\cap S_n(\phi')$
has a strong deformation retraction (in the $E_{new}$ direction)
to the contractible space $(\pi^*x, I)^\circ$.

For any $l\in L$ let  $N(l)\subset \R^s$ denote the union of all cubes which have $l$ as one of their vertices.
Let $U(l)$ be its interior. Write $U_n(l):=U(l)\cap S_n(\phi)$. If $l\in S_n(\phi)$ then $U_n(l)$ is a contractible
neighbourhood of $l$ in $S_n(\phi)$. Also,  $S_n(\phi)$ is covered by $\{U_n(l)\}_l$.
Moreover, $\pi_{\R}^{-1}(U_n(l))$ has the homotopy type of $\pi_{\R}^{-1}(l)$, hence it is contractible.
More generally, for any cube $\square$,
$$\pi_{\R}^{-1}(\cap _{\mbox{$v$ vertex of $\square$}} U_n(l)) \sim \pi_{\R}^{-1}(\square ^\circ)$$
which is contractible by the above discussion. Since all the intersections of $U_n(l)$'s are of these type,
we get that the inverse image of any intersection is contractible. Hence by \v{C}ech covering
(or Leray spectral sequence) argument, $\pi_{\R}$
induces an isomorphism
$H^*(S_n(\phi'),\Z)=H^*(S_n(\phi),\Z)$. In fact, this already shows that
$\bH^*_{an}(\phi')=\bH^*_{an}(\phi)$ and $\RR_{an}(\phi')=\RR_{an}(\phi)$.
The fact that these identifications preserve the $U$--action follows from the natural inclusions of the spaces $S_n$.

In order to prove the homotopy equivalence,
one can use   quasifibration, defined in  \cite{DoldThom};
 see also \cite{DadNem}, e.g. the relevant Theorem 6.1.5.
Since
$\pi_{\R}:S_{n}(\phi')\to  S_{n}(\phi)$  is a quasifibration,
and  all the fibers are contractible, the homotopy equivalence follows.

\bekezdes \label{bek:proof2}

The case when we blow up an intersection point $E_{v_0}\cap E_{v_1}$ starts very similarly, however at some point
there is a major difference, hence  we need an additional argument.

Below we write   $E_J:=\sum_{v\in J}E_v$ for any subset $J\subset \calv$.

With very similar notations, in this case we define $\pi^*(\sum_vx_vE_v)=\sum_v x_vE_v+(x_{v_0}+
x_{v_1})E_{new}$. Then all the statements till \ref{bek:proof1} (in fact, till (\ref{eq:eps}))
remain valid (including
the key (\ref{eq:HOM2a})).
However, the first part of \ref{bek:proof1} should be modified.
The first difference is  in (\ref{eq:eps}). Indeed, in this case
\begin{equation}\label{eq:eps3}
\pi^*(x+E_{I'})=\pi^*x+ E_{I'}+\epsilon\cdot E_{new},
\end{equation}
where $\epsilon$ is the cardinality of $I'\cap \{v_0,v_1\}$. This can be 0, 1 or 2. Therefore, if
$\{v_0,v_1\}\not\subset I$, then $\epsilon \in \{0,1\} $ for any $I'$, hence for such cubes $(x,I)$
all the arguments of \ref{bek:proof1} work.

Assume in the sequel that $\{v_0, v_1\}\subset I$. Write $J=I\setminus \{v_0,v_1\}$.

 There are two cube candidates  of $L(\Gamma')\otimes \R$ which might
cover the cube $(x,I)\in S_n(\phi)$. One of them is $(\pi^*x,I)$ (as above).  However,  by  (\ref{eq:HOM2a})
the lattice points $\pi^*(x+E_I)=\pi^*x+E_I+2E_{new}$ and $\pi^*(x+E_I)-E_{new}=\pi^*x+E_I+E_{new}$ are in
$S_n(\phi')$, but the vertex $\pi^*(x)+E_I$ of $(\pi^*x,I)$ might not be in $S_n(\phi')$.

Another candidate is $(\pi^*x+E_{new},I)$, but here again
$\pi^*x$ and $\pi^*x-E_{new}$ are in $S_n(\phi')$  but the vertex $\pi^*x+E_{new}$ of
 $(\pi^*x+E_{new},I)$ might be not.
So both cubes a priori are obstructed if we apply (\ref{eq:HOM2a}) only.

Next we analyze these obstructions in more detail and we show that one of the candidate cubes works.

\bekezdes\label{bek:A} {\bf Case 1.}
 Assume that $w_0'(\pi^*x)=w_0'(\pi^*x+E_{new})$. Then by (\ref{eq:MON1}) and (\ref{eq:MON2})
 we obtain that $\hh'(\pi^*x)=\hh'(\pi^*x+E_{new})$.  By the matroid inequality of $\hh'$
 we get that $\hh'(\pi^*x+E_{J'})=\hh'(\pi^*x+E_{J'}+E_{new})$ for any $J'\subset J$.
 This again via (\ref{eq:MON1}) and (\ref{eq:MON2})
 shows that $w_0'(\pi^*x+E_{J'})=w_0'(\pi^*x+E_{J'}+E_{new})$.
 In particular,
 $$w_0'(\pi^*x+E_{J'}+E_{new})=w_0'(\pi^*x+E_{J'})=w_0'(\pi^*(x+E_{J'}))=w_0(x+E_{J'})\leq n.$$
That is, the  vertices of type $\pi^*x+E_{J'}+E_{new}$ of $(\pi^*x+E_{new},I)$
are in $S_n(\phi')$. For all other vertices we already know this fact (use (\ref{eq:HOM2a})).
Hence $(\pi^*x+E_{new},I)$ is in $S_n(\phi')$ and it projects via $\pi_{\R}$ bijectively to $(x,I)$.
Furthermore, $\pi_{\R}^{-1}(x,I)^\circ \cap S_n(\phi')$ admits a deformation retract to
 $(\pi^*x+E_{new},I)^\circ $, hence it is contractible.

\bekezdes \label{bek:B} {\bf Case 2.}
 Assume that $w_0'(\pi^*x+E_I)=w_0'(\pi^*x+E_I+E_{new})$,
 or $w_0'(\pi^*(x+E_I)-2E_{new})=w_0'(\pi^*(x+E_I)-E_{new})$.
  Then by (\ref{eq:MON1}) and (\ref{eq:MON2})
 we obtain that $h^1(\calO_{\pi^*x+E_I})=h^1(\calO_{\pi^*x+E_I+E_{new}})$.  By the opposite
 matroid inequality of $h^1$ and (\ref{eq:MON1}) and (\ref{eq:MON2}) again we obtain that
 $w_0'(\pi^*x+E_I-E_{J'})=w_0'(\pi^*x+E_I-E_{J'}+E_{new})$.
 In particular,
 $$w_0'(\pi^*x+E_I-E_{J'})=w_0'(\pi^*x+E_I-E_{J'}+E_{new})=w_0'(\pi^*(x+E_I-E_{J'})-E_{new})=w_0(x+E_I-E_{J'})\leq n.$$
That is, the  vertices of type $\pi^*x+E_I-E_{J'}$ of $(\pi^*x,I)$
are in $S_n(\phi')$. For all other vertices we already know this fact (use (\ref{eq:HOM2a})).
Hence $(\pi^*x,I)$ is in $S_n(\phi')$ and it projects via $\pi_{\R}$ bijectively to $(x,I)$.
Furthermore, $\pi_{\R}^{-1}(x,I)^\circ \cap S_n(\phi')$ admits a deformation retract to
 $(\pi^*x,I)^\circ $, hence it is contractible.

\bekezdes \label{bek:C} {\bf Case 3.} Assume that the assumptions from {\bf Case 1} and {\bf Case 2} do not hold.
This means that
$$\left\{ \begin{array}{l}
\hh'(\pi^*x)<\hh'(\pi^*x+E_{new}), \ \mbox{and} \\
h^1(\calO_{\pi^*x+E_I})< h^1(\calO_{\pi^*x +E_I+E_{new}}).\end{array}\right.$$
This reads as follows (cf. (\ref{eq:duality})
$$\left\{ \begin{array}{l}
(a) \ \ H^0(\calO_{\tX'}(-\pi^*x-E_{new}) \subsetneq  H^0(\calO_{\tX'}(-\pi^*x)), \ \mbox{and} \\
(b) \ \ H^0(\tX', \Omega^2_{\tX'}(\pi^* x +E_I)) \subsetneq
H^0(\tX', \Omega^2_{\tX'}(\pi^* x +E_I+E_{new})).
\end{array}\right.$$
Part {\it (a)}  means the following: there exists a function $f\in H^0(\tX', \calO_{\tX'})$
such that ${\rm div}_{E'}(f)\geq \pi^*x$, and in this inequality the $E_{new}$--coordinate entries are equal.
By part {\it (b)}, there exists a global  2--form $\omega $ such that
${\rm div}_{E'}(\omega)\geq -\pi^*x-E_I-E_{new}$ and the $E_{new}$--coordinate entries are equal.

Therefore, the form $f\omega\in H^0(\tX'\setminus E', \Omega^2_{\tX'})$ has the property that
${\rm div}_{E'}(f\omega)\geq -E_I-E_{new}$ with equality at the $E_{new}$ coordinate.
In particular, again by duality (\ref{eq:duality}), we obtain that  in $\tX'$ the following
strict inequality  holds:
\begin{equation}\label{eq:ROSSZ}
h^1(\calO_{E_I+E_{new}})>h^1(\calO_{E_I}) \ \ (\cV'=\cV\cup\{new\}, \ I\subset \cV).\end{equation}
But if the graph is a tree then this strict inequality cannot happen.

\bekezdes In particular, for any $I\subset \cV$ either $\{v_0,v_1\}\not\subset  I$, or in the opposite case
either {\bf Case 1} or {\bf Case 2} applies.  Hence,
in any case,
$\pi_{\R}^{-1}(x,I)^\circ \cap S_n(\phi')$ is contractible. Therefore, $S_n(\phi)$ and $S_n(\phi')$
have the same homotopy type by the argument from the end of \ref{bek:proof1}.
\end{proof}

\bekezdes\label{bek:ADD}  {\bf Addendum to Theorem \ref{th:annlattinda}.}
In the proof of Theorem \ref{th:annlattinda}  the assumption (namely that $\Gamma$ should be a tree)
was used only to show that (\ref{eq:ROSSZ}) cannot hold.
However, the failure of (\ref{eq:ROSSZ}) can be guaranteed in other way as well.
Let $\Gamma$ be a resolution graph, which is not a tree. If $\lambda$ is a (simple) closed 1--cycle
in  (the topological realization of) $\Gamma$ then let $\ell(\lambda)$ be its combinatorial length
(number of edges in it).

Furthermore, in general, if $V\subset \calv$ ($V\not=\emptyset$)
 then $h^1(\calO_{E_V})=\sum_{v\in V}g_v+b_1(|E_V|)$, where $b_1(|E_V|)$ denotes the number of independent
 closed 1--cycles  of the support $|E_V|$.
 Hence,  (\ref{eq:ROSSZ}) can happen in $\Gamma'$  if and only if $I$ considered in $\calv$
 contains a closed 1--cycle $\lambda$  in $\Gamma$, $\lambda$ has vertices $\{v_0,v_1\}\cup J$,
 its lift $\lambda'$ to $\Gamma'$
 has vertices  $\{v_0,v_1\}\cup J\cup\{new\}$, hence this cycle $\lambda'$
 is broken if we delete the new vertex $\{new\}$ of $\Gamma'$.
 Thus, $|I|\geq \ell(\lambda)$.

 Let $\ell_{min}(\phi)$ be the smallest $\ell(\lambda)$, where $\lambda$ is a closed 1--cycle of $\Gamma$.
 Since $\phi$ is a good resolution, hence no  $E_v$  has self-intersection, $\ell_{min}(\phi)\geq 2$.
 The above discussion shows that for any $|I|<\ell_{min}(\phi)$ (\ref{eq:ROSSZ}) cannot hold.
 Hence, any $q$--cube with $q<\ell_{min}(\phi)$ can be lifted (as in the proof), it is in the image of
 $\pi_{\R}$, and its inverse image is contractible. Note also that the $ \pi_{\R}$--image of
 any cube from $S_n(\phi')$ is in $S_n(\phi)$. Hence $S_n(\phi)$ is obtained from ${\rm im}(\pi_{\R})$
  by adding cubes of dimension $\geq \ell_{min}(\phi)$ and
  ${\rm im}(\pi_{\R})$ has the homotopy type of
 $S_n(\phi')$. In particular, $\bH^{\leq (\ell_{min}(\phi)-2)}_{an}(\phi)=\bH^{\leq (\ell_{min}(\phi)-2)}_{an}(\phi')$.

 Since $\ell_{min}(\phi)\geq 2$ we obtain that in any case
 $\bH^0_{an}(\phi)=\bH^0_{an}(\phi')$ and $\RR_{an}(\phi)=\RR_{an}(\phi)$; hence these objects are well
 defined without any restriction regarding the link.

 Now we concentrate on $\bH^q_{an}$ with fixed $q$. Let us consider two resolutions $\phi_1$ and $\phi_2$ such that
 $\ell_{min}(\phi_1), \ell_{min}(\phi_2)\geq q+2$. Let $\phi_{12}$ be the resoltuion which dominates both $\phi_1$ and $\phi_2$. Then along the sequence of resolutions which connects $\phi_1$ and $\phi_2$
via $\phi_{12}$ the module  $\bH^q_{an}$ is stable by the above discussion.

 In particular, $\bH^q_{an}(\phi)$ is well--defined, whenever it is computed in a resolution
 $\phi$ with $\ell_{min}(\phi)\geq q+2$.

\begin{definition} Assume that $\Gamma$ is a tree. Then $\bH^*_{an}(\phi)$ and $\RR_{an}(\phi)$
are independent of $\phi$.
In the sequel we will use  the notations  $\bH^*_{an,0}(X,o)$  and $\RR_{an,0}(X,o)$ for them.
(The index `zero' means `canonical spin$^c$--structure.)
They are called the {\it  analytic lattice cohomology of $(X,o)$}  and the {\it
analytic graded root of $(X,o)$} (associated with the canonical spin$^c$--structure).
\end{definition}
They are  analytic invariant of the germ $(X,o)$. By Corollary \ref{cor:veges}
$\bH^*_{an,red,0}(X,o)$ has finite $\Z$--rank, hence the Euler characteristic $eu(\bH^*_{an,0}(X,o))$ is well--defined.

\subsection{The `Combinatorial Duality Property' of the pair $(\hh, \hh^\circ)$}\label{ss:anCDP}

\bekezdes
The following reinterpretation of $\hh^\circ:R(0,c)\to \Z$  will be helpful.

From \ref{bek:LauferDual} we have that
$\dim\, H^0(\calO_{\tX}(K_{\tX}+Z))/
H^0( \calO_{\tX}(K_{\tX}))=h^1(\calO_{Z})$
 for any $Z>0 $.
 On the other hand, by the opposite matroid rank inequality  \ref{rem:antmatroid}
 we also have $h^1(\cO_Z)=h^1(\cO_{\min\{Z,\lfloor Z_K\rfloor_+\}})$. 
Hence,  $\hh^\circ(l)=p_g-h^1(\calO_l)=h^1(\calO_c)-h^1(\calO_l)$ appears as
\begin{equation}\label{eq:reinter}
\hh^\circ(l):= \dim\, \frac{H^0(\tX, \calO_{\tX}(K_{\tX}+c))}{
H^0(\tX, \calO_{\tX}(K_{\tX}+l))}=
\dim\, \frac{H^0(\tX, \calO_{\tX}(K_{\tX}+\lfloor Z_K\rfloor_+))}{
H^0(\tX, \calO_{\tX}(K_{\tX}+\min\{l, \lfloor Z_K\rfloor_+\} ))}
.\end{equation}
(In (\ref{eq:reinter}) $\lfloor Z_{K}\rfloor _+$ can be replaced by $Z_{coh}$ as well.)


\begin{example} \label{ex:GorAN}
{\bf Gorenstein germs.}
Assume that $(X,o)$ is Gorenstein. Then $Z_K\in L$, and let us assume (for simplicity) that
$Z_K\geq 0$ (this happens e.g. in the good minimal resolution, cf. \cite{Book}).
Then for any $l\in R(0, Z_K)$, by (\ref{eq:reinter})  we have $\hh^\circ (l)=
\dim\, H^0(\calO_{\tX})/ H^0(\calO_{\tX}(-Z_K+l))=\hh(Z_K-l)$.
Therefore, $w_0(l)=\hh(l)+\hh(Z_K-l)-p_g$
is obtained as the {\it symmetrization} of $\hh$.
In particular $w_0(l)=w_0(Z_K-l)$. Note that this is true for the topological weight function too:
$\chi(l)=\chi(Z_K-l)$. However, in the analytic case, the symmetry might fail for non-Gorenstein germs
(even if we consider  a numerically Gorenstein topological type).
\end{example}

The following property will be crucial in the Euler characteristic computation.

\begin{lemma}\label{lem:hsimult} {\bf (CDP)}  Assume that $g_v=0$ for any $v\in\calv$. Then
there exists no  $l\in L_{\geq 0}$ and  $v\in\calv$ such that  the differences
$\hh(l+E_v)-\hh(l)$ and $\hh^\circ (l)-\hh^\circ (l+E_v)$ are simultaneously strictly positive.
\end{lemma}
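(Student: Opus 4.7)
The plan is to derive a contradiction: if both differences were strictly positive, we could manufacture a global meromorphic 2-form on $\tX$ with a pole of exact order one along $E_v$ and regular elsewhere, which by Laufer duality would force $g_v\geq 1$.

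First I would translate each positivity assumption into the existence of a concrete section. From $\hh(l+E_v)-\hh(l)>0$, since $\hh(l+E_v)-\hh(l)=\dim H^0(\calO_{\tX}(-l))/H^0(\calO_{\tX}(-l-E_v))$, one obtains a function $f\in H^0(\calO_{\tX}(-l))\setminus H^0(\calO_{\tX}(-l-E_v))$, so $\mathrm{div}_E(f)\geq l$ with the $v$-coordinate \emph{exactly} equal to $l_v$ (while the other coordinates are $\geq l_w$). For $\hh^\circ$, using the reinterpretation \eqref{eq:reinter} together with the exact sequence $0\to\Omega^2_{\tX}(l)\to\Omega^2_{\tX}(l+E_v)\to\calO_{E_v}(l+E_v+K_{\tX})\to 0$ and the Grauert--Riemenschneider vanishing for $\Omega^2_{\tX}(l)$, one gets
\[
 \hh^\circ(l)-\hh^\circ(l+E_v)=h^1(\calO_{l+E_v})-h^1(\calO_l)=\dim H^0(\Omega^2_{\tX}(l+E_v))/H^0(\Omega^2_{\tX}(l))>0,
\]
so there exists a meromorphic 2-form $\omega\in H^0(\Omega^2_{\tX}(l+E_v))\setminus H^0(\Omega^2_{\tX}(l))$. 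Its pole order along $E_v$ is exactly $l_v+1$, along $E_w$ ($w\neq v$) at most $l_w$, and $\omega$ is holomorphic off $E$.

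Next I would form the product $f\omega$. Coordinate-wise along $E$: the $v$-coordinate of $\mathrm{div}_E(f\omega)$ is $l_v+(-l_v-1)=-1$, while for every $w\neq v$ it is $\geq l_w+(-l_w)=0$. Also $f\omega$ is holomorphic on $\tX\setminus E$ (since $f$ is holomorphic and $\omega$ has poles only on $E$). Hence $f\omega\in H^0(\Omega^2_{\tX}(E_v))$ and it represents a \emph{nonzero} class in $H^0(\Omega^2_{\tX}(E_v))/H^0(\Omega^2_{\tX})$, because the $v$-coordinate of its pole is exactly one.

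Finally, apply the Laufer duality \eqref{eq:duality} with $Z=E_v$:
\[
 H^0(\Omega^2_{\tX}(E_v))/H^0(\Omega^2_{\tX})\cong H^1(\calO_{E_v})^*\cong \mathbb{C}^{g_v}.
\]
Since $g_v=0$ by hypothesis, the quotient vanishes, contradicting the previous step. This yields the claim. The main care-point is bookkeeping the difference between ``$\geq$'' and ``$=$'' in the $v$-coordinate of the two divisors, since the contradiction collapses if the $v$-coordinate of $f\omega$ turns out to be $\geq 0$; the strict inequalities in the two hypotheses are precisely what guarantee the exact pole order one. (Formally one may need to enlarge $c$ so that both $l$ and $l+E_v$ lie in $R(0,c)$ when invoking \eqref{eq:reinter}, which is harmless by Lemma~\ref{lem:INDEPAN}.)
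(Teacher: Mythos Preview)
Your proof is correct and follows essentially the same route as the paper's: extract a function $f$ with $\mathrm{div}_E(f)\geq l$ and exact $v$-coordinate $l_v$ from the first inequality, a form $\omega$ with pole order exactly $l_v+1$ along $E_v$ from the second, multiply, and use Laufer duality \eqref{eq:duality} with $Z=E_v$ to conclude $h^1(\calO_{E_v})\neq 0$, contradicting $g_v=0$. The paper's argument is the same but more terse; your added care about the exact $v$-coordinate and the remark on enlarging $c$ are sound but not strictly needed beyond what the paper assumes.
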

\begin{proof}
%

If $\hh(l+E_v)>\hh(l)$ then there exists a global function $f\in H^0(\calO_{\tX})$ with ${\rm div}_Ef\geq l$, where the
$E_v$-coordinate is  $({\rm div}_Ef)_v = l_v$.
Similarly, if $\hh^\circ (l)>\hh^\circ(l+E_v)$ then there exists a
global 2-form $\omega$ with possible poles along $E$,
 with ${\rm div}_E\omega \geq -l-E_v$ (i.e., the pole order is $\leq l+E_v$), and
 $({\rm div}_E\omega )_v = -l_v-1$.
In particular, the form $f\omega$
satisfies
 ${\rm div}_E\ f\omega \geq -E_v$  and
  $({\rm div}_E f \omega )_v = -1$. This implies $H^0(\Omega_{\tX}^2(E_v))/H^0(\Omega_{\tX}^2)\not=0$, or,
  by (\ref{bek:LauferDual}), $h^1(\calO_{E_v})\not=0$. 
\end{proof}

\bekezdes
In the next discussions  we will assume that the link of $(X,o)$ is a rational homology sphere
(hence the graph in particular is a tree).
We show that  $\bH^*_{an,0}(X,o)$ usually is non-trivial, its Euler characteristic is $p_g$,
it can be `compared' with the topological
lattice cohomology $\bH^*_{top,0}(M)$ of the link,  and it really sees the variation of different
analytic structures supported on a fixed topological type.
It is worth to compare
several statements below with  their topological analogues valid for
$\bH^*_{top,0}(M)$. (Recall that $\bH^*_{top,0}(M)$ was defined whenever the link is a $\Q HS^3$.)

\subsection{The Euler characteristic $eu(\bH^*_{an}(X,o))$}\label{ss:anEu}

\bekezdes Lemma \ref{lem:hsimult} will allow us to determine the  Euler characteristic $eu(\bH^*_{an}(X,o))$
of the analytic lattice cohomology by a combinatorial argument. Surprisingly, this Euler characteristic automatically
equals the Euler characteristic of  path cohomologies  associated with any increasing path
(this equality definitely does not hold
in  the topological versions of the corresponding lattice cohomologies).

First, let us fix the notations.
In the sequel
we will also consider
for any increasing path $\gamma$ connecting 0 and $c$
 (that is, $\gamma=\{x_i\}_{i=0}^t$, $x_{i+1}=x_i+E_{v(i)}$, $x_0=0$ and $x_t=c$, $c\geq Z_{coh}$)
 the  path lattice cohomology $\bH^0(\gamma,w)$ as in \ref{bek:pathlatticecoh}.
Accordingly,  we have the numerical
Euler characteristic
 $eu(\bH^0(\gamma,w))$ as well.

The proof of the next theorem basically is combinatorial,
it is provided in the next subsection, where we separated certain combinatorial aspects of the
lattice cohomology, cf. Theorem \ref{th:comblattice}.

\begin{theorem}\label{th:euANLAT} Assume that the link is a $\Q HS^3$. Then
$eu(\bH^*_{an,0}(X,o))=p_g(X,o)$. Furthermore, for any increasing path $\gamma$ connecting 0 and $c$ (where
$c\geq Z_{coh}$)
 we also have $eu(\bH^*(\gamma,w))=p_g$.
\end{theorem}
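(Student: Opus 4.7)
The plan is to prove the path statement first, using the Combinatorial Duality Property (Lemma \ref{lem:hsimult}), and then invoke the combinatorial framework of Section \ref{ss:CombLattice} (Theorem \ref{th:comblattice}) to conclude that the full Euler characteristic $eu(\bH^*_{an,0}(X,o))$ coincides with the path Euler characteristic along any increasing path. Both reductions rely crucially on the CDP of the pair $(\hh,\hh^\circ)$, and on the boundary values $\hh(0)=0$, $\hh^\circ(0)=p_g$, $\hh^\circ(c)=0$ for $c\geq Z_{coh}$.

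\textbf{The path computation.} Fix an increasing path $\gamma=\{x_i\}_{i=0}^{t}$ with $x_0=0$, $x_t=c\geq Z_{coh}$ and $x_{i+1}=x_i+E_{v(i)}$. Set
\[
a_i:=\hh(x_{i+1})-\hh(x_i)\geq 0, \qquad b_i:=\hh^\circ(x_i)-\hh^\circ(x_{i+1})\geq 0,
\]
so that $w_0(x_i)-w_0(x_{i+1})= -a_i+b_i$. The CDP says $a_i\cdot b_i=0$ for every $i$, hence in either case $\max\{0,-a_i+b_i\}=b_i$. By Lemma \ref{9PC2},
\[
eu(\bH^*(\gamma,w))=-w_0(0)+\sum_{i=0}^{t-1}\max\{0,\,w_0(x_i)-w_0(x_{i+1})\}=\sum_{i=0}^{t-1} b_i=\hh^\circ(0)-\hh^\circ(c)=p_g,
\]
which is the telescoping we wanted. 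Here we used $w_0(0)=0$, $\hh^\circ(0)=p_g$ and $\hh^\circ(c)=0$ (the latter because $c\geq Z_{coh}$, so $h^1(\calO_c)=p_g$).

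\textbf{The rectangle case.} For the full cohomology, I would argue via Lemma \ref{bek:LCSW}: on the rectangle $T=R(0,c)$,
\[
eu(\bH^*(T,w))=\sum_{\square_q\subset T}(-1)^{q+1}w_q(\square_q).
\]
Using $w_q(\square_q)=\max\{w_0(v):v\text{ vertex of }\square_q\}$ together with the CDP, one shows combinatorially (this is the content  referenced as Theorem \ref{th:comblattice} in Section \ref{ss:CombLattice}) that this alternating sum reduces to the same telescoping expression along any increasing path from $0$ to $c$. In particular $eu(\bH^*(T,w))=eu(\bH^*(\gamma,w))=p_g$, and by Lemma \ref{lem:INDEPAN} this is $eu(\bH^*_{an,0}(X,o))$.

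\textbf{Main obstacle.} The path computation is essentially immediate once the CDP is available, so the real substance is in the rectangle case: one must verify that, on a product of intervals, the max-weights assigned to higher-dimensional cubes cancel in pairs whenever the CDP holds for the underlying vertex weight $w_0$. This is a purely combinatorial statement about weight functions satisfying the CDP, and I expect it to be the heart of the argument in Section \ref{ss:CombLattice}; in particular it must be independent of $(X,o)$ and apply to $(\hh,\hh^\circ)$ only through the CDP and the boundary values above.
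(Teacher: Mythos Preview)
Your path computation is correct and matches the paper's Lemma \ref{lem:comblat}. However, there is a genuine gap in the rectangle case.

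You claim that the alternating-sum reduction ``must be independent of $(X,o)$ and apply to $(\hh,\hh^\circ)$ only through the CDP and the boundary values.'' This is false. Theorem \ref{th:comblattice}, which you invoke, has \emph{two} hypotheses: the CDP for the pair $(h,h^\circ)$ \emph{and} the stability property (\ref{eq:stability}) for $h$. The paper's proof checks both: CDP via Lemma \ref{lem:hsimult}, and stability via the fact that $\hh$ is the Hilbert function of a filtration (hence satisfies the matroid rank inequality (\ref{eq:matroid}), which implies stability). You only verify the first.

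That stability is not redundant is exactly the content of Remark \ref{ex:WCGP}(a): the second table in Remark \ref{rem:ketpelda} gives an $(h,h^{sym})$ satisfying CDP for which $eu(\bH^*(R,w))=4\neq 2=h(c)$, precisely because stability fails there. In the proof of Theorem \ref{th:comblattice}, stability is used in the key identity (\ref{eq:KEYIDEN}), namely $w((l,I))-w(l)=h(l+E_I)-h(l)$: both the base case $h(l+E_v)=h(l)$ for all $v\in I$ and the inductive step (locating a vertex $u\in J$ with $h(l)<h(l+E_u)$) rely on stability, not merely on CDP. So your sketch must add the one-line observation that $\hh$, coming from the divisorial filtration, satisfies stability; otherwise the appeal to Theorem \ref{th:comblattice} is incomplete.
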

\begin{proof}
We claim that the assumptions of Theorem \ref{th:comblattice} are satisfied. Indeed,
the CDP was verified in \ref{lem:hsimult}, while the stability property of $\hh$  follows
since it is associated with a filtration.
\end{proof}
This  in particular means that $\bH^*_{an,0}(X,o)$ is a {\it categorification  of the geometric genus},
that is, it is a graded cohomology module whose Euler characteristic is $p_g$.

For more comments regarding Theorem \ref{th:euANLAT} see Remark \ref{rem:UJ}.

\subsection{Analytic reduction theorem}\label{ss:anRT}

\bekezdes
Our next goal is to prove a `Reduction Theorem', the analogue of the topological Theorem
\ref{th:red}.
Via such a result, the rectangle $R=R(0,c)$ can be replaced  by another rectangle sitting in a
lattice of smaller rank. The procedure starts with identification of a set of `bad' vertices,
see \ref{ss:BadVer}. More precisely,   we decompose $\calv$ as a disjoint union
$\overline{\calv}\sqcup \calv^*$, where the vertices $\overline {\calv}$ are the `essential' ones, the ones which dominate the others, and the coordinates $\calv^*$ are those which `can be eliminated'.
In the topological context the possible choice of $\overline{\calv}$ was dictated by combinatorial
properties of $\chi$ with a special focus on the topological characterization of rational germs.
In the present context we start with certain  analytic properties of 2-forms (which reflects the dominance
of $\overline{\calv}$ over $\calv^*$).
(Note that $p_g=0$ if and only  if $H^0(\tX\setminus E, \Omega^2_{\tX})=H^0(\tX, \Omega^2_{\tX})$.)

In this section we assume that the link is a rational homology sphere.
\begin{definition}\label{def:DOMAN}
We say that  $\overline{\calv}$ is an B$_{an}$--set
if it satisfy the following
property: if 
some differential form
$\omega\in H^0(\tX\setminus E, \Omega_{\tX}^2) $ satisfies
$({\rm div}_E\omega) |_{\overline{\calv}}\geq -E_{\overline{\calv}}$ \
then necessarily
$\omega\in H^0(\tX, \Omega_{\tX}^2)$.
By (\ref{bek:LauferDual}) this is equivalent with the vanishing  $h^1(\calO_Z)=0$ for any
$Z=E_{\overline{\calv}}+l^*$, where $l^*\geq 0$ and it is supported on $\calv^*$.
\end{definition}


\begin{lemma}\label{lem:AnNodes}
If $\overline{\calv}$ is a B--set, then it is a B$_{an}$--set too.
(For the definition of B--sets see \ref{def:SWrat}.)
 \end{lemma}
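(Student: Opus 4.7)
The strategy is to reduce the analytic statement to a purely numerical condition on $\chi$ using the B--set hypothesis, and then to invoke the standard Artin--Laufer criterion (or equivalently, iterated Lipman vanishing) to conclude the sheaf-cohomology vanishing. So the plan is to prove the following two-step reduction: (i) every subcycle $0<l\le Z$ satisfies $\chi_\Gamma(l)\ge 1$, and (ii) this numerical fact forces $h^1(\calO_Z)=0$ regardless of analytic structure.

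For step (i), I would fix, by definition of a B--set, a choice of strictly more negative integers $e'_v\le e_v$ for $v\in\overline{\calv}$ such that the modified graph $\Gamma'$ (same vertex set, same adjacencies, only the self-intersections at $\overline{\calv}$ changed) is rational. In particular, all genera $g_v$ vanish and $\chi_{\Gamma'}(l)\ge 1$ for every $0<l\in L$. Write $l=\sum n_vE_v$ and let $Z_K$, $Z_K'$ be the canonical cycles of $\Gamma$ and $\Gamma'$. A direct expansion, using that $(E_u,E_v)$ and $(Z_K,E_v)$ change only through the diagonal entries $(E_v,E_v)$ with $v\in\overline{\calv}$, yields
\begin{equation*}
2\chi_\Gamma(l)-2\chi_{\Gamma'}(l)\;=\;\sum_{v\in\overline{\calv}}n_v(n_v-1)(e'_v-e_v).
\end{equation*}
Hence $\chi_\Gamma(l)=\chi_{\Gamma'}(l)$ whenever all $\overline{\calv}$-coordinates of $l$ belong to $\{0,1\}$. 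Since $Z=E_{\overline{\calv}}+l^*$ has $\overline{\calv}$-coordinates all equal to $1$, every $0<l\le Z$ has $\overline{\calv}$-coordinates in $\{0,1\}$, giving $\chi_\Gamma(l)=\chi_{\Gamma'}(l)\ge 1$.

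For step (ii), I would invoke the Artin--Laufer argument: under the assumption that $g_v=0$ for all $v\in|Z|$ and $\chi(l)\ge 1$ for all $0<l\le Z$, one can build a computation sequence $0=Z_0<Z_1<\cdots<Z_t=Z$ with $Z_{i+1}=Z_i+E_{v_{i+1}}$ and $(Z_i,E_{v_{i+1}})=1$ at each step. The short exact sequence
\begin{equation*}
0\longrightarrow \calO_{E_{v_{i+1}}}(-Z_i)\longrightarrow \calO_{Z_{i+1}}\longrightarrow \calO_{Z_i}\longrightarrow 0
\end{equation*}
together with the fact that $\calO_{E_{v_{i+1}}}(-Z_i)$ is a line bundle of degree $-1$ on $\mathbb P^1$ (hence has $h^1=0$) gives by induction $h^1(\calO_{Z_i})=0$ for all $i$, and in particular $h^1(\calO_Z)=0$, as required. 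Then \ref{bek:LauferDual} translates this into the desired statement about $2$--forms.

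The main obstacle is the existence of a computation sequence along which each $(Z_i,E_{v_{i+1}})$ equals $1$ rather than a larger positive number (so that the kernel line bundle on $\mathbb P^1$ has $h^1=0$). One recovers this by exploiting the inequalities $\chi(Z_i)\ge 1$ and $\chi(Z_{i+1})\ge 1$ together with the identity $\chi(Z_{i+1})-\chi(Z_i)=1-(Z_i,E_{v_{i+1}})$, iteratively choosing a $v_{i+1}$ with $(Z_i,E_{v_{i+1}})>0$ (which exists whenever $Z_i\not\ge Z$ and $Z_i\not\in\calS$, a situation handled by the standard Artin argument applied to the bounded region $R(0,Z)$). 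Once this combinatorial lemma is in hand, everything else is mechanical.
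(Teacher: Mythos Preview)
Your approach is essentially the paper's: the key observation (your step (i)) that $\chi_\Gamma(l)=\chi_{\Gamma'}(l)$ whenever the $\overline{\calv}$--coordinates of $l$ lie in $\{0,1\}$ is exactly the paper's remark that, since $E_{\overline{\calv}}$ is reduced and the Euler numbers on $\calv^*$ are unchanged, the relevant intersection numbers transfer from $\Gamma'$ to $\Gamma$. Step (ii) is then the standard Artin--Laufer vanishing for a cycle $Z$ with $\chi(l)\ge 1$ for all $0<l\le Z$.

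The one point where your write-up is looser than the paper's is the construction of the computation sequence. You build it in \emph{increasing} order and then face the obstacle of finding $v_{i+1}$ with $(Z_i,E_{v_{i+1}})\le 1$ (you write ``$=1$'', but $\le 1$ is what you need) while still staying below $Z$; your resolution (``choose $v_{i+1}$ with $(Z_i,E_{v_{i+1}})>0$'') does not actually guarantee $(Z_i,E_{v_{i+1}})\le 1$, and the appeal to ``the standard Artin argument'' is vague here. The paper avoids this entirely by constructing the sequence in \emph{decreasing} order: given $0<x_i\le Z$, there must exist $E_u\subset|x_i|$ with $(x_i-E_u,E_u)'\le 1$, since otherwise $(x_i,E_u)'\ge (Z_K',E_u)'$ for all such $u$, which after summation gives $\chi'(x_i)\le 0$, contradicting rationality of $\Gamma'$. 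Setting $x_{i-1}:=x_i-E_u$ and iterating down to $0$ produces the sequence with no side conditions to check. This is the clean direction; I recommend you rewrite step (ii) that way.
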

\begin{proof}
Let $\Gamma$ be the original graph, and let $\Gamma'$ be that rational graph which is obtained from $\Gamma$ be decreasing
the numbers $\{E_v^2\}_{v\in\overline{\calv}}$. Let $(\,,\,)'$ be its intersection form and $\chi'$ the RR expression.
Fix a cycle of type $Z=E_{\overline{\calv}}+l^*$ with $|l^*|\subset \calv^*$, $l^*\geq 0$. Then, regarding $\Gamma'$, we
claim the following: there exists a sequence $\{x_i\}_{i=0}^t$ such that
\begin{equation}\label{eq:SEQ}
x_0=0, \ x_t=Z, \ x_{i+1}=x_i+E_{v(i)}, \ (x_i, E_{v(i)})'\leq 1 \ \mbox{at every step $i$}.
\end{equation}
We construct the elements $x_i$ by decreasing order. Assume that $x_i$ is already constructed. Then, there exists at least one
$E_u\subset |x_i|$ such that $\chi'(x_i-E_u)\leq \chi'(x_i)$, equivalently $(\dag)$ $(x_i-E_u, E_u)'\leq 1$.
[Indeed, if not, then $(x_i, E_u)'\geq (Z'_K, E_u)'$ for every $E_u\subset |x_i|$, hence by summation
$\chi'(x_i)\leq 0$, a fact which contradicts the rationality of $\Gamma'$.] Then set $x_{i-1}:=x_i-E_u$. Note that
$(x_{i-1},E_u)'=(x_i-E_u,E_u)'\leq 1$ by ($\dag$). Hence the existence of $\{x_i\}_i$ follows.

Now, since $E_{\overline{\calv}}$ is reduced, and the self-intersections from the support of $l^*$ are not modified,
along these sequence $(x_i, E_{v(i)})'=(x_i, E_{v(i)})$. In particular, this sequence has the very same properties (\ref{eq:SEQ})
in $\Gamma$ too.

Then, using the exact sequences $0\to \calO_{E_{v(i)}}(-x_i)\to \calO_{x_{i+1}}\to \calO_{x_i}\to 0$
we get that $h^1(\calO_{x_{i+1}})=h^1(\calO_{x_i})$. Since $h^1(\calO_{x_0})=0$ by induction $h^1(\calO_{x_t})=0$ too.
\end{proof}
 \begin{example}\label{ex:Ran} By the above lemma,
 the set $\overline{\calv}={\mathcal N}$ of nodes is an B$_{an}$--set.
 Moreover, if $\{\overline{v}\}$ is an B--set of an AR graph, then it is an B$_{an}$--set as well.
 \end{example}

\bekezdes Associated with a disjoint  decomposition  $\calv=\overline{\calv}\sqcup \calv^*$,  we write
any  $l\in L$
as $\overline{l}+l^*$, or $(\overline{l}, l^*)$,
where $\overline {l}$ and  $l^*$ are  supported on $\overline{\calv}$ and
 $\calv^*$ respectively. We also write  $\overline{R}$ for the rectangle $R(0, \overline{c})$, the $\overline{\calv}$-projection of $R(0,c)$ with $c=Z_{coh}$.

For any $\overline {l}\in \overline {R}$ define  the weight function
$$\overline{w}_0(\overline{l})=\hh(\overline{l})+\hh^\circ (\overline{l}+c^*)-p_g
=\hh(\overline{l})-h^1(\calO_{\overline{l}+c^*}).$$
Consider all the cubes of $\overline{R}$ and the weight function
$\overline{w}_q:\calQ_q(\overline{R})\to \Z$ by $ \overline{w}_q(\square_q)=\max\{w_0(\overline{l})\,:\, \overline{l} \
 \mbox{\,is any vertex of $\square_q$}\}$.

\begin{theorem}\label{th:REDAN} {\bf Reduction theorem for the analytic lattice cohomology.}
If  $\overline{\calv}$ is an B$_{an}$-set
then
$$\bH^*_{an}(R,w)=\bH^*_{an}(\overline{R}, \overline{w}).$$
\end{theorem}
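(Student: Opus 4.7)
The proof strategy is to exhibit a compatible family of homotopy equivalences $S_n(w) \cap R \simeq S_n(\overline{w}) \cap \overline{R}$ for every $n \in \Z$, preserving the filtration (and hence the $U$-action) together with its augmentation. The approach adapts the topological reduction theorem \ref{th:red} from \cite{LN1}, replacing the Riemann--Roch weight $\chi$ by the analytic weight $w_0 = \hh - h^1(\calO_\bullet)$, and replacing the combinatorial properties of $\chi$ by the CDP (Lemma \ref{lem:hsimult}) together with the analytic input supplied by the B$_{an}$-set condition.

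The first step is to establish the key identity
\[
\overline{w}_0(\overline{l}) \ = \ \min_{l^* \in [0, c^*] \cap L}\, w_0(\overline{l}, l^*), \qquad \overline{l} \in \overline{R},
\]
together with attainment at some explicit lattice point. The lower bound $w_0(\overline{l}, l^*) \geq \overline{w}_0(\overline{l})$ follows directly from monotonicity of $\hh$ in $l^*$ combined with monotonicity of $l \mapsto h^1(\calO_l)$ on $R$. For attainment one invokes CDP: each lattice step in the fiber $\{\overline{l}\} \times [0, c^*]$ is either an $\hh$-up step (strictly increasing $\hh$, leaving $h^1$ fixed), an $h^1$-up step (strictly increasing $h^1$, leaving $\hh$ fixed), or constant. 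The B$_{an}$-set assumption then guarantees that one can find a lattice path from $(\overline{l}, 0)$ to $(\overline{l}, c^*)$ in which all the $h^1$-up steps precede the $\hh$-up steps; the intermediate point immediately after the last $h^1$-up step has $\hh = \hh(\overline{l})$ and $h^1(\calO_\bullet) = h^1(\calO_{\overline{l}+c^*})$, hence realizes $w_0 = \overline{w}_0(\overline{l})$.

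With the minimum identity in hand, the projection $\pi: R \to \overline{R}$ (deleting the $\calv^*$-coordinates) sends cubes of $w$-weight $\leq n$ to cubes of $\overline{w}$-weight $\leq n$, and attainment gives surjectivity of $\pi: S_n(w) \cap R \to S_n(\overline{w}) \cap \overline{R}$ already at the level of lattice points. To upgrade to a homotopy equivalence I would invoke the quasifibration argument from \cite{DoldThom} that was already used in the proof of Theorem \ref{th:annlattinda} (see \ref{bek:proof1}): it suffices to verify that for every open cube $(\overline{l}, \overline{I})^\circ$ in $S_n(\overline{w}) \cap \overline{R}$ the preimage $\pi^{-1}((\overline{l}, \overline{I})^\circ) \cap S_n(w)$ is contractible. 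This contractibility reduces to showing that the sublevel set $\{l^* \in [0, c^*] \cap L : w_0(\overline{l}, l^*) \leq n\}$ is a contractible subcomplex of the cube $[0, c^*]$, and the argument passes to higher-dimensional cubes by a \v{C}ech covering argument identical to the one in \ref{bek:proof1}.

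The main obstacle is exactly this last contractibility statement: controlling the shape of the $l^*$-fiber sublevel sets using only CDP and the B$_{an}$ vanishing. I would isolate it as an abstract combinatorial lemma in Section \ref{ss:CombLattice}, phrased for general weight functions of the form $\hh + \hh^\circ - p_g$ on a lattice rectangle which satisfy CDP together with a B$_{an}$-style vanishing on a designated set of coordinates. Once established in that generality, the same lemma immediately yields Theorem \ref{th:REDAN}, and (as emphasized in the introduction) it will also furnish reduction theorems for the other filtrations, such as the Newton filtration, that appear in the forthcoming articles of the series.
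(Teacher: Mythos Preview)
Your overall strategy---project $R \to \overline{R}$ and show the restriction $S_n(w) \to S_n(\overline{w})$ is a homotopy equivalence with contractible fibers---is exactly the paper's. The crucial structural difference is that the paper does not attack the full $|\calv^*|$-dimensional fiber at once: it proceeds by induction, eliminating one coordinate $v_0 \in \calv^*$ at a time, so that at every stage the fiber is a segment. This single device is what closes the gaps in your outline.

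Concretely, two steps in your proposal are not justified. (i) Attainment of the minimum: you assert that B$_{an}$ produces a lattice path in the fiber with all $h^1$-up steps first, but CDP only forbids simultaneous increase---it does not say an $h^1$-up step is available whenever $h^1$ is not yet maximal. In the paper this is replaced by a direct argument on the one-dimensional fiber: a function $f$ witnesses the threshold $t_0$ where $\hh$ first jumps (\ref{eq:1RED}), a form $\omega$ witnesses $t_0^\circ$ where $h^1$ stops jumping (\ref{eq:2RED}), and then $f\omega$ has $({\rm div}_E f\omega)|_{\overline{\calv}} \ge 0$, so B$_{an}$ forces $t_0 \ge t_0^\circ$. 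This yields the down--constant--up shape of $w$ along the segment, hence both the minimum identity and interval-contractibility at once. (ii) Cube-preimages: you say this ``passes to higher-dimensional cubes by a \v{C}ech covering argument identical to \ref{bek:proof1}'', but the \v{C}ech argument there takes cube-preimage contractibility as \emph{input}; it does not derive it from point-fiber contractibility. In the paper this is a second, separate application of the $f\omega$/B$_{an}$ trick: one checks $t_0(y) \ge t_0^\circ(y+E_{I'})$ for every $I' \subset I$, so $t_0(y)$ is a common point of all the corner-intervals and the cube-preimage deformation-retracts to a single lifted cube. Your multi-dimensional version has no analogue of this step, and the promised combinatorial lemma in Section~\ref{ss:CombLattice} is neither present there nor sketched.

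The fix is simply to adopt the induction on $|\calv^*|$; then every fiber is one-dimensional and the two inequalities above are exactly what the B$_{an}$-hypothesis delivers.
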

\begin{proof}
For any $\cali\subset \calv$ write $c_\cali$ for the $\cali$-projection of $c=Z_{coh}$.
We proceed by induction, the proof will be given in $|\calv^*|$ steps.
For any $\overline{\calv}\subset \cali\subset \calv$ we create the inductive setup.
We write $\cali^*=\calv\setminus \cali$, and according to the disjoint union
$\cali\sqcup \cali^*=\calv$ we  consider the coordinate decomposition
$l=(l_\cali,l_{\cali^*})$. We also set $ R_\cali=R(0, c_\cali)$ and the weight function
$$w_\cali(l_\cali)=\hh(l_\cali)+\hh^\circ(l_\cali+c_{\cali^*})-p_g. $$
Then  for $\overline{\calv}\subset \cali\subset \cJ\subset \calv$, $\cJ=\cali\cup \{v_0\}$
($v_0\not\in\cali$),
we wish to prove that $\bH^*_{an}(R_\cali, w_\cali)=\bH^*_{an}(R_{\cJ}, w_{\cJ})$.
For this consider the projection $\pi_{\R}:R_{\cJ}\to R_{\cali}$.

For any fixed $y\in R_\cali$ consider the fiber $\{y+tE_{v_0}\}_{0\leq t\leq c_{v_0},\ t\in \Z}$.

Note that $t\mapsto \hh(y+tE_{v_0})$  is increasing. Let $t_0=t_0(y)$
be the smallest value $t$ for which
$\hh(y+tE_{v_0})< \hh(y+(t+1)E_{v_0})$.
If $t\mapsto \hh(y+tE_{v_0})$ is constant then we take $t_0=c_{v_0}$.
If $t_0<c_{v_0}$, then $t_0$ is characterized by the existence of a function
\begin{equation}\label{eq:1RED}
f\in H^0(\calO_{\tX}) \ \ \mbox{with} \ \
({\rm div}_Ef)|_\cali\geq y, \ \ \  ({\rm div}_Ef)_{v_0}=t_0.
\end{equation}
Symmetrically,  $t\mapsto \hh^{\circ} (y+c_{\cJ^*}+ tE_{v_0})$  is decreasing. Let $t_0^\circ=t_0^\circ (y)$
be the smallest value $t$ for which
$\hh^{\circ} (y+c_{\cJ^*}+tE_{v_0})=\hh^{\circ} (y+c_{\cJ^*}+(t+1)E_{v_0})$. The value
$t_0^\circ$  is characterized by the existence of a form
\begin{equation}\label{eq:2RED}
\omega \in H^0(\tX\setminus E,\Omega_{\tX}^2) \ \ \mbox{with} \ \
({\rm div}_E\omega) |_\cali\geq - y, \ \ \  ({\rm div}_E\omega )_{v_0}=-t^{\circ}_0.
\end{equation}
This shows that there exist a form $f\omega\in H^0(\tX\setminus E, \Omega_{\tX}^2)$ such that
$({\rm div}_Ef\omega )|_\cali\geq 0$ and $({\rm div}_Ef\omega )_{v_0}=t_0-t^{\circ}_0$.
By the B$_{an}$ property we necessarily must have $t_0-t^{\circ}_0\geq 0$.
Therefore, the weight $t\mapsto w_{\cJ}(y+tE_{v_0})=\hh(y+tE_{v_0})+\hh^{\circ } (y+tE_{v_0}+c_{\cJ^*})-p_g$
is decreasing for $t\leq t_0^\circ$, is increasing for $t\geq t_0$. Moreover, for $t_0^\circ \leq t\leq t_0$
it  takes the
constant value $\hh(y)+\hh^{\circ } (y+c_{v_0}E_{v_0}+c_{\cJ^*})-p_g=w_{\cali}(y)$.

Next we fix $y\in R_\cali$ and some $I\subset \cali$ (hence a cube $(y,I)$ in $R_{\cali}$).
We wish to compare the intervals
$[t_0^\circ (y+E_{I'}), t_0 (y+E_{I'})]$ for all subsets $I'\subset I$. We claim that they have at least one
common element (in fact, it turns out that $t_0(y)$ works).

Note that $\hh(y+tE_{v_0})=\hh(y+(t+1)E_{v_0})$ implies $\hh(y+tE_{v_0}+E_{I'})=\hh(y+(t+1)E_{v_0}+E_{I'})$
for any $I'$,
hence $t_0(y)\leq t_0(y+E_{I'})$. In particular, we need to prove that $t_0(y)\geq t_0^\circ (y+E_{I'})$.
Similarly as above, the value $t_0^\circ(y+E_{I'})$  is characterized by the existence of a form
\begin{equation*}
\omega_{I'} \in H^0(\tX\setminus E, \Omega_{\tX}^2) \ \ \mbox{with} \ \
({\rm div}_E\omega_{I'}) |_\cali\geq - y-E_{I'}, \ \ \  ({\rm div}_E\omega_{I'} )_{v_0}=-t^{\circ}_0(y+E_{I'}).
\end{equation*}
Hence the  from $f\omega_{I'}\in H^0(\tX\setminus E, \Omega_{\tX}^2)$ satisfies
${\rm div}_Ef\omega_{I'} |_\cali\geq -E_{I'}$ and $({\rm div}_Ef\omega )_{v_0}=t_0(y)-t^{\circ}_0(y+E_{I'})$.
By the B$_{an}$ property we must have $t_0(y)-t^{\circ}_0(y+E_{I'})\geq 0$.

Set $S_{\cJ,n}$ and $S_{\cali,n}$ for the lattice spaces defined by $w_\cJ$ and $w_\cali$. If
$y+tE_{v_0}\in S_{\cJ,n}$ then $w_\cJ(y+tE_{v_0})\leq n$, hence  by the above discussion $w_\cali(y)\leq n$ too.
In particular, the projection $\pi_{\R}:R_\cJ\to R_\cali$ induces a map $S_{\cJ,n}\to S_{\cali,n}$.
We claim  that it is a homotopy equivalence. The argument is similar to the proof from
\ref{th:annlattinda} via the above preparations.
\end{proof}

\begin{corollary}\label{cor:AR} If $(X,o)$ admits a resolution $\phi$ with a B$_{an}$--set of cardinality $\overline{s}$,
then  $\bH^{\geq \overline{s}}_{an,0}(X,o)=0$. In particular, if \ $\Gamma$ is almost rational (AR) then $\bH^{\geq 1}_{an,0}(X,o)=0$.
\end{corollary}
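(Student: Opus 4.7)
The plan is to invoke the Analytic Reduction Theorem \ref{th:REDAN} to collapse the analytic lattice cohomology onto a rectangle of rank $\overline{s}$, and then to observe that finite cubical subcomplexes of $\R^{\overline{s}}$ have trivial cohomology in degrees $\geq \overline{s}$.

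First, applying the reduction theorem to the given B$_{an}$--set $\overline{\calv}$ yields an isomorphism of graded $\Z[U]$-modules
\[
\bH^*_{an,0}(X,o) \;\cong\; \bH^*_{an}(\overline{R}, \overline{w}),
\]
where $\overline{R}=R(0,\overline{c})$ sits inside $L(\overline{\calv})\otimes\R\cong\R^{\overline{s}}$. By definition $\bH^q(\overline{R},\overline{w})=\bigoplus_{n} H^q(S_n;\Z)$, with each sublevel set $S_n$ a finite cubical subcomplex of $\overline{R}$.

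Next I would show $H^q(S_n;\Z)=0$ for all $q\geq\overline{s}$. Since $\overline{R}$ is $\overline{s}$-dimensional, this is immediate for $q>\overline{s}$. For the borderline case $q=\overline{s}$, the decisive point is that $S_n$ is a compact subset of $\R^{\overline{s}}$; Alexander duality applied in the one-point compactification $S^{\overline{s}}$ gives
\[
\tilde{H}^{\overline{s}}(S_n;\Z)\;\cong\;\tilde{H}_{-1}\bigl(S^{\overline{s}}\setminus S_n;\Z\bigr)\;=\;0.
\]
A more elementary alternative runs through the cubical chain complex: for an $\overline{s}$-cycle $c=\sum_i n_i C_i$ supported on the $\overline{s}$-cubes of $S_n$, any $(\overline{s}-1)$-face lying on $\partial\overline{R}$, or any face whose opposite adjacent $\overline{s}$-cube of $\overline{R}$ is not in $S_n$, appears in $\partial c$ with coefficient $\pm n_i$, forcing $n_i=0$; iterating propagates this vanishing through $S_n$, killing $c$.

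For the AR specialization, an AR graph admits by definition a B--set of cardinality $\leq 1$, and by Lemma \ref{lem:AnNodes} (see also Example \ref{ex:Ran}) any such B--set is automatically a B$_{an}$--set. Taking $\overline{s}=1$ then gives $\bH^{\geq 1}_{an,0}(X,o)=0$. The one subtle step---the real content beyond the reduction theorem---is the vanishing in degree exactly $\overline{s}$: the naive dimension bound on cubes only yields $\bH^{>\overline{s}}=0$, and one genuinely needs the Euclidean embedding of $S_n$ into $\R^{\overline{s}}$ to eliminate the top degree.
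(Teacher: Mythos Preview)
Your proof is correct and follows the route the paper intends: the corollary is stated immediately after Theorem~\ref{th:REDAN} with no proof, so the intended argument is precisely to reduce via the analytic reduction theorem to the rectangle $\overline{R}\subset\R^{\overline{s}}$ and then use the dimension constraint. The paper leaves the cohomological vanishing implicit.

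Your treatment actually goes a bit beyond what the paper makes explicit, and this is worth noting. The dimension bound on cubes in $\overline{R}$ immediately kills $\bH^{>\overline{s}}$, but you are right that $\bH^{\overline{s}}=0$ is not automatic from the dimension alone (an abstract $\overline{s}$-dimensional CW complex can of course have top cohomology). Your Alexander duality argument in $S^{\overline{s}}$ is the clean way to see this: each $S_n$ is a compact cubical subcomplex of $\R^{\overline{s}}$, so $\tilde{H}^{\overline{s}}(S_n)\cong\tilde{H}_{-1}(S^{\overline{s}}\setminus S_n)=0$. The elementary cubical alternative you sketch also works. The AR specialization via Lemma~\ref{lem:AnNodes} is exactly what the paper has in mind (cf.\ Example~\ref{ex:Ran}).
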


\begin{proposition}\label{prop:AnLatAR}
Assume that for  $(X,o)$ and for  its  resolution $\phi$ with  graph $\Gamma$ the following facts hold:

(i) $\Gamma$ is almost rational,

(ii) $p_g=\min _{\gamma} eu (\bH^0_{top}(\gamma,\Gamma, -Z_K))$.

\noindent
Then $\bH^*_{an,0} (X,o)=\bH^*_{top}(M, -Z_K)$. (Obviously, we have  $\bH^{\geq 1}_{an,0} (X,o)=\bH^{\geq 1}_{top}(M, -Z_K)=0$, cf. \ref{cor:AR}.)

\end{proposition}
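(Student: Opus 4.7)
My plan is to exhibit an increasing path along which the analytic weight $w_0^{an}$ and the topological weight $\chi$ agree, and then to leverage almost-rationality to conclude that the full lattice cohomologies are both computed by this path.

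First I would reduce to the degree-zero part: by hypothesis (i) and Corollary \ref{cor:AR}, $\bH^{\geq 1}_{an,0}(X,o) = 0$, and similarly $\bH^{\geq 1}_{top}(M,-Z_K) = 0$ via Theorem \ref{th:red} applied with a B-set of cardinality one. Then, using hypothesis (ii) together with \ref{bek:path}, I pick an increasing minimizing path $\gamma_0 = \{x_i\}_{i=0}^t$ from $x_0 = 0$ to $x_t = c \in \mathcal{K}$. Remark \ref{rem:SPLIT} guarantees that along $\gamma_0$ every short exact sequence $0 \to \cO_{E_{v(i)}}(-x_i) \to \cO_{x_{i+1}} \to \cO_{x_i} \to 0$ splits cohomologically, yielding $\hh(x_{i+1}) - \hh(x_i) = h^0(\cO_{E_{v(i)}}(-x_i))$ and $h^1(\cO_{x_{i+1}}) - h^1(\cO_{x_i}) = h^1(\cO_{E_{v(i)}}(-x_i))$. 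Subtracting and using $h^0 - h^1 = \chi(\cO_{E_{v(i)}}(-x_i)) = 1 - (x_i,E_{v(i)}) = \chi(x_{i+1}) - \chi(x_i)$, induction from $w_0^{an}(0) = 0 = \chi(0)$ gives $w_0^{an}|_{\gamma_0} = \chi|_{\gamma_0}$. Hence $\bH^0_{an}(\gamma_0, w^{an}) \cong \bH^0_{top}(\gamma_0, \chi)$ as graded $\Z[U]$-modules.

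The remaining task is to upgrade this to an isomorphism between the full lattice cohomologies. On the topological side, AR together with Proposition \ref{lem:xellprops}(c) realizes $\bH^0_{top}(M, -Z_K)$ as the path cohomology of the concatenated computation sequence $\gamma_{CCS}$; since hypothesis (ii) forces $eu(\bH^0_{top}(\gamma_0)) = p_g$, the equality of Euler characteristics together with the graded-root structure on AR graphs identifies $\bH^0_{top}(\gamma_0, \chi) \cong \bH^0_{top}(M, -Z_K)$. On the analytic side, I apply Reduction Theorem \ref{th:REDAN} with the B$_{an}$-set $\{v_0\}$ (available by \ref{ex:Ran}) to express $\bH^*_{an,0}(X,o)$ as a one-dimensional lattice cohomology with reduced weight $\overline{w}_0^{an}(\ell)$, and then match $\overline{w}_0^{an}(\ell)$ to $\tau(\ell) = \chi(x(\ell)) = w_0^{an}(x(\ell))$ at the universal cycles $x(\ell)$, which lie along $\gamma_0$ by the structure of the CCS.

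The hard part is this final matching: proving that $\overline{w}_0^{an}(\ell)$ produced by the iterated application of \ref{th:REDAN} equals $w_0^{an}(x(\ell))$, equivalently that $x(\ell)$ lies in the flat strip $[t_0^\circ, t_0]$ of constant $w_0^{an}$-value above $\ell E_{v_0}$ generated by the reduction. I expect this to follow by combining the characterizations of $t_0$ and $t_0^\circ$ from the proof of \ref{th:REDAN} (via sections of $\cO_{\tX}$ and of $\Omega^2_{\tX}$) with the minimality property of $x(\ell)$ and the surjectivity $H^0(\cO_{x_{i+1}}) \twoheadrightarrow H^0(\cO_{x_i})$ coming from the splitting: the analytic sections and $2$-forms furnished by the splitting are precisely what is needed to certify that $x(\ell)$ lies inside the flat strip. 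A possibly cleaner alternative would be a direct deformation-retract argument showing $S_n^{an} \cap \gamma_0 \hookrightarrow S_n^{an}$ is a homotopy equivalence for every $n$, modeled on the proofs of \ref{lem:INDEPAN}, \ref{th:annlattinda}, and \ref{th:REDAN}.
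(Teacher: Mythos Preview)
Your outline has the right pieces but contains a real gap and leaves the crucial step as speculation.

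\textbf{The gap.} You choose ``an increasing minimizing path $\gamma_0$'' and later assert both that the universal cycles $x(\ell)$ lie on $\gamma_0$ and that $\bH^0_{top}(\gamma_0,\chi)\cong\bH^0_{top}(M,-Z_K)$ follows from equality of Euler characteristics. Neither holds for an \emph{arbitrary} minimizing path. The path that carries the $x(\ell)$'s and whose path cohomology equals $\bH^0_{top}(M,-Z_K)$ is the concatenated computation sequence $\gamma_{CCS}$ of \S\ref{ss:CCSAR}; an arbitrary minimizer need not be this path. And equal Euler characteristics do not force isomorphic graded $\Z[U]$--modules or graded roots (this is exactly the phenomenon of Remark~\ref{rem:UJ}). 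The fix is not optional: you must take $\gamma_0=\gamma_{CCS}$ from the start, so that Proposition~\ref{lem:xellprops}(c) gives $\bH^0_{top}(\gamma_{CCS},\chi)=\bH^0_{top}(M,-Z_K)$ directly and the $x(\ell)$'s are intermediate vertices of $\gamma_0$ by construction.

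\textbf{The ``hard part'' has a direct solution.} Your proposed route via the flat strip $[t_0^\circ,t_0]$ is workable but circuitous, and you acknowledge it is incomplete. The paper handles the identity $\overline{w}(\ell)=\chi(x(\ell))$ in two short steps. First, $\overline{w}(\ell)=w_0^{an}(x(\ell))$: one has $\hh(\ell E_{v_0})=\hh(x(\ell))$ since $\ell E_{v_0}\le x(\ell)\le s(\ell E_{v_0})$, while $h^1(\calO_{\ell E_{v_0}+\lfloor Z_K\rfloor^*})=h^1(\calO_{x(\ell)})$ follows from Lipman's vanishing \ref{prop:VAN0} applied to $\calO_{l^*}(-x(\ell))$ (using $(x(\ell),E_v)\le 0$ for $v\in\calv^*$) combined with the opposite matroid inequality \ref{rem:antmatroid}. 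Second, $w_0^{an}(x(\ell))=\chi(x(\ell))$: by Lemma~\ref{lem:weightTOPAN} this is exactly the surjectivity of $r(\ell):H^0(\calO_{\tX})\to H^0(\calO_{x(\ell)})$, which you already obtained from the splitting along $\gamma_{CCS}$ together with $h^1(\calO_{x_t})=p_g$. So once you work on the CCS rather than an arbitrary minimizer, the matching $\overline{w}(\ell)=\chi(x(\ell))$ is immediate and the topological reduction theorem~\ref{th:red} finishes the proof. Your splitting argument showing $w_0^{an}|_{\gamma_0}=\chi|_{\gamma_0}$ is correct and is essentially the same as the paper's surjectivity step, but you only need it at the points $x(\ell)$, not along the whole path.
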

\begin{proof}
We can assume that $\phi$ is minimal good.
Then  $Z_K\geq 0$ (see e.g. \cite[Example 6.3.4]{Book} or \cite{PPP,Veys}).
Write
$\calv$ as $\{v_0\}\sqcup \calv^*$, where $\{v_0\}$ is an B-set.  By \ref{ex:Ran}
it is  an B$_{an}$ set as well. In particular, $\bH^*_{an,0}(X,o)=\bH^*_{an}(\overline{R},\overline {w})$,
where $\overline{w}(\ell)=\hh(\ell E_{v_0})-h^1(\calO_{\ell E_{v_0}+\lfloor Z_K\rfloor^*})$, $0\leq \ell
\leq \lfloor Z_K\rfloor _{v_0}$.

Let $x(\ell)=x(\ell E_{v_0})$ be the universal cycle introduced in \ref{lemF1}. It is the smallest cycle whose
$E_{v_0}$-multiplicity is $\ell$  and
$(x(\ell),  E_v)\leq 0$ for any $v\not=v_0$. If $s=s(\ell E_{v_0})$ is the smallest cycle in $\calS$ with $\ell E_{v_0}\leq s$
(for its existence see \cite{NOSz}), then by the universal properties $\ell E_{v_0}\leq x(\ell)\leq s$. Hence,
since $H^0(\cO_{\tX}(-\ell E_{v_0}))=H^0(\cO_{\tX}(-s))$, $\hh(\ell E_{v_0})=\hh(x(\ell))$.

Next, for any $l^*\in L_{>0}$ with $|l^*|\in \calv^*$ consider the exact sequence
$0\to \calO_{l^*}(-x(\ell))\to \calO_{x(\ell)+l^*}\to \calO_{x(\ell)}\to 0$. By the Lipman's vanishing
\ref{prop:VAN0} we have $h^1( \calO_{l^*}(-x(\ell)))=0$, hence
\begin{equation}\label{eq:daguj} h^1(\calO_{x(\ell)+l^*})= h^1(\calO_{x(\ell)}).\end{equation}
 On the other hand, using $Z_{coh}\leq \lfloor Z_K\rfloor$ (cf. \ref{rem:antmatroid}), the opposite matroid rank inequality
from  \ref{rem:antmatroid}
 and (\ref{eq:daguj}) applied for $l^*$ sufficiently large, we obtain
  $h^1(\calO_{x(\ell)+l^*})= h^1(\calO_{\min\{x(\ell)+l^*, \lfloor Z_K\rfloor\}})=
  h^1(\calO_{\ell E_{v_0}+\lfloor Z_K\rfloor ^*})$.
 Thus  $\overline{w}(\ell)=\hh(x(\ell))-h^1(\calO_{x(\ell)})=w(x(\ell))$.

 Finally, consider the exact sequence $0\to \calO_{\tX}(-x(\ell))\to \calO_{\tX}\to \calO_{x(\ell)}\to 0$ and
 the morphism $r(\ell):H^0(\calO_{\tX})\to H^0(\calO_{x(\ell)})$. Then $\hh(x(\ell))+\dim\, {\rm coker} \,( r(\ell))=h^0(\calO_{x(\ell)})$, hence
 $\overline{w}(\ell)=\chi(x(\ell))-\dim\,{\rm coker}\, (r(\ell))$.

The point in this identity is that in fact $r(\ell)$ is onto for all $\ell$. Indeed, this follows from
assumption {\it (ii)}
and from the fact that
$\min _{\gamma} eu (\bH^0_{top}(\gamma,\Gamma, -Z_K))$ is realized by the concatenated computation sequence $\gamma=\{x_i\}_{i=0}^t$
having as intermediate terms the universal cycles $x(\ell)$ (defined in  \ref{ss:CCSAR}). In this case
$H^0(\calO_{x_{i+1}})\to H^0(\calO_{x_i})$ is necessarily onto, by the splitting of the cohomological exact sequence associated with consecutive members of the optimal path, cf. \ref{rem:SPLIT}.
Since $h^1(\cO_{x_t})=p_g=h^1(\cO_{\tX})$, we also conclude that
$H^0(\calO_{\tX})\to H^0(\calO_{x_i})$  is also surjective for all $x_i$, in particular for the intermediate terms
$x(\ell)$ as well.

In conclusion,  $\overline{w}(\ell)=\chi(x(\ell))$. Then use the topological reduction theorem \ref{th:red}
for AR graphs.
\end{proof}
\begin{example}\label{ex:ARAN}
The two assumptions of Proposition \ref{prop:AnLatAR} are satisfied in the following cases,
cf. \ref{ex:MINforwh} (recall that
we  assume that the link is a rational homology sphere):

$\bullet$ \ rational singularities,

$\bullet$ \ Gorenstein elliptic singularities, and, more generally,
non-necessarily numerically Gorenstein  elliptic germs  with $p_g=\mbox{length of the elliptic sequence}$
(combine  \cite{weakly} and \cite{NNIIIb}),

$\bullet$ \ AR singularities for which the SWIC for the canonical spin$^c$ structure holds,
(in particular, weighted homogeneous germs and
superisolated germs associated with rational unicuspidal curves, or
splice quotient singularities associated with an AR graph $\Gamma$ \cite{NO2}).

\vspace{2mm}

Hence, for all these cases,  $\bH^*_{an,0} (X,o)=\bH^*_{top}(M,-Z_K)$.
For concrete  expressions of $\bH^*_{top}(M,-Z_K)$ in the above  cases see \cite{NOSz,NGr,Nlattice,Book}.
\end{example}

\subsection{Comparison of $\bH^*_{an,0}(X,o)$ with $\bH^*_{top}(M,-Z_K)$ for any $(X,o)$ with $\Q HS^3$ link}\label{an:Comp}

\bekezdes The modules  $\bH^*_{an,0}(X,o)$ with $\bH^*_{top}(M,-Z_K)$
can be compared even without the assumptions of Proposition \ref{prop:AnLatAR}.
Next,  we define a morphism  $\bH^*_{an,0}(X,o)\to \bH^*_{top}(M,-Z_K)$
of graded $\Z[U]$--modules, and we list some properties of the
analytic  graded root associated with the analytic weight function $w_0$.

First,  we compare the analytic and topological
weight functions.
 From the exact sequence $0\to \calO_{\tX}(-l)\to \calO_{\tX}\to \calO_l\to 0$
we obtain:

\begin{lemma}\label{lem:weightTOPAN}
If $w_{an}$ denotes the analytic weight function, then $w_0(l)=\chi(l)-\dim \, {\rm coker}\, (r(l))$,
where $r(l):H^0(\calO_{\tX})\to H^0(\calO_l)$ is the natural morphism.
\end{lemma}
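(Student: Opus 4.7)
The plan is to unpack the definition of $w_0$ and combine it with two standard tools: the Riemann--Roch formula for $\chi(\calO_l)$ and the long exact sequence of cohomology coming from
\[
0 \to \calO_{\tX}(-l) \to \calO_{\tX} \to \calO_l \to 0.
\]

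First I would recall that, by the definition \eqref{eq:wean} of the analytic weight,
$w_0(l) = \hh(l) - h^1(\calO_l)$, where $\hh(l) = \dim H^0(\calO_{\tX})/H^0(\calO_{\tX}(-l))$ is the multivariable Hilbert function. The long exact cohomology sequence of the displayed short exact sequence of sheaves begins
\[
0 \to H^0(\calO_{\tX}(-l)) \to H^0(\calO_{\tX}) \xrightarrow{r(l)} H^0(\calO_l) \to \cdots,
\]
which immediately gives $\hh(l) = \dim \im (r(l)) = h^0(\calO_l) - \dim \coker(r(l))$.

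Next, I would invoke the Riemann--Roch formula $\chi(\calO_l) = -(l,l-Z_K)/2 = \chi(l)$, i.e.\ $h^0(\calO_l) - h^1(\calO_l) = \chi(l)$. Substituting $h^0(\calO_l) = \chi(l) + h^1(\calO_l)$ into the previous identity yields
\[
\hh(l) = \chi(l) + h^1(\calO_l) - \dim\coker(r(l)),
\]
and hence $w_0(l) = \hh(l) - h^1(\calO_l) = \chi(l) - \dim\coker(r(l))$, as claimed.

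No real obstacle is expected here: this is a two-line bookkeeping argument, with the only subtlety being the convention that $\chi(l)$ simultaneously denotes the Riemann--Roch polynomial on $L$ and the holomorphic Euler characteristic of $\calO_l$ (an identity which itself follows from Riemann--Roch applied inductively to exact sequences $0 \to \calO_{E_v}(-l') \to \calO_{l'+E_v} \to \calO_{l'} \to 0$, starting from $\chi(\calO_{E_v}) = 1-g_v$ when $l = E_v$).
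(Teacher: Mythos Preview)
Your argument is correct and is exactly the unpacking of the paper's one-line proof, which simply cites the exact sequence $0\to \calO_{\tX}(-l)\to \calO_{\tX}\to \calO_l\to 0$. The identification $\chi(\calO_l)=\chi(l)$ via Riemann--Roch and the computation of $\hh(l)$ as $\dim\im r(l)$ are precisely the implicit steps.
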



\bekezdes\label{bek:CONSEQW}  {\bf Discussion.} Lemma \ref{lem:weightTOPAN}  has several consequences.
Let us also write $w_{top}=\chi$.

(1) \ $w_{an}\leq \chi$, hence $\min \, w_{an}\leq \min\, \chi=\min\, w_{top}$.

(2) \ Set $S_{an,n}=\cup\{\square \,:\, w_{an}(\square)\leq n\}$ and
$S_{top,n}=\cup\{\square \,:\, w_{top}(\square)\leq n\}$.  Then
$S_{top,n}\subset S_{an,n}$ for any $n\in\Z$.

(3) \ For any $q\geq 0$ there exists a {\it graded $\Z[U]$-module morphism}
$$\mathfrak{H}^q:\bH^q  _{an,0}(X,o)\to
\bH^q_{top}(M,-Z_K).$$
This is an isomorphism in the cases when Proposition \ref{prop:AnLatAR} holds.

Similarly, the inclusion of the connected components induce a graded morphism of graph
$$\RR_{top,0}(M)\to \RR_{an,0}( X,o).$$
(4) The following (new) characterization of rational germs hold: $(X,o)$ is rational if and only if $w_{an}|_{L_{>0}}>0$.
Indeed, if $(X,o)$ is rational then $w_{an}(l)=\hh(l)>0$ for $l>0$.
Conversely use $\chi\geq w_{an}$ and Artin's criterion.

(5) If $\min\, w_{an}=0$, and $(X,o)$ is not rational, then $(X,o)$ is elliptic (use $\chi\geq w_{an}$). Conversely, if $(X,o)$ is
Gorenstein elliptic then $\min\, w_{an}=0$ (use \ref{ex:ARAN}).
Moreover, $\min\, w_{an}=0$ holds for any elliptic singularity  with generic analytic structure as well (see
\ref{ex:GENERICANTYPE}). However,  in general $\min\, w_{an}=0$ does not hold for any elliptic germ, see
\ref{ex:ELLipticAN}. For a complete discussion of the elliptic case see \cite{AgNeEll}.

(6) \ In the next discussions it is convenient to assume that $c$ is conveniently large or $c=\infty$.
Let $C$ be a component of $S_{an, n}$ such that for any
$l\in S_{an, n}$ we have $w_{an}(l)=n$ (that is, the class of $C$ represents a local  minimum of the analytic weight in the
analytic graded root).
Assume that for some $s\in C$ we have $w_{an}(s+E_v)>w_{an}(s)$ for every $v\in\calv$ (that is, $s$
is  a maximal element of $C$). Then $s\in \calS_{an}$.
(Indeed,  $w_{an}(s+E_v)>w_{an}(s)$ implies $\hh(s+E_v)>\hh(s)$, cf. \ref{lem:hsimult}.)
In fact, using the matroid rank inequality of $w_{an}$ one can show that
 $C$  has a unique maximal element.
This shows that the `ends' of the analytic graded root $\RR_{an} $ represent elements of the analytic semigroup $\calS_{an}$.

(7) 
Assume that $m\in L_{>0}$ satisfies
$w_{an}(m-E_v)>w_{an}(m)$ for any $E_v\subset |m|$.
(E.g., it is a minimal element of a component $C$ as in (6).)
By \ref{lem:hsimult}
$h^1(\calO_{m-E_v})< h^1(\calO_{m})$. This can happen only if $h^1(\calO_{E_v}(-m+E_v))=h^0(\calO_{E_v}(m-Z_K))\not=0$,
which implies $(Z_K-m,E_v)\leq 0$ for every $E_v\subset |m|$. In particular, $\chi(m)\leq 0$.

 (8) Assume that $n\geq 0$. We claim that  any connected component of $S_{an, n}$ contains at least one
 element of $S_{top,n}$.
Indeed, assume the opposite, and
 let $C_n$ be a component of $S_{an, n}$ such that  $\chi(l)>n$ for any $l\in C_n$.
 Take a local $w_{an}$-minimum in $C_n$ with value $k\leq n$, consider the component $C$  of
 $S_{an,k}$ which contains it, and let $m$ be
 a minimal element of $C$. Then, by (7), $\chi(m)\leq 0$. But this is  a contradiction since
 $m\in C_n$, hence $\chi(m)>n\geq 0$.

(9) Recall that $S_{top, n}$ is connected for any $n\geq 1$, see  Proposition \ref{9STR2}{\it (b)}.
Therefore, part (8) applied for $n\geq 1$   shows that
$S_{an, n}$ is connected too.
Furthermore, for $n=0$ it implies that at
the 0-graded level ${\mathfrak{H}}^0_{{\rm deg}=0}:\bH^0_{an,0}(X,o)_{{\rm deg}=0}\to \bH^0_{top,0}(M)_{{\rm deg}=0}$  is injective,
or, at the level of degree zero vertices of roots,
 $\RR_{top,0}(M)_{{\rm deg}=0} \to \RR_{an,0}( X,o)_{{\rm deg}=0}$ is surjective.


\begin{problem}\label{prob:4.7.4}
 {\it (a) For a fixed topological type find all the possible graded $\Z[U]$--modules $\{\bH^*_{an,0}\}_{an}$,
associated with all the possible analytic structures supported on that topological type. (For such a concrete classification  see
Examples \ref{ex:ALLANTHESAME}, \ref{ex:CONTtwocuspsAN} and \ref{ex:ELLipticAN}.)

(b) For a fixed topological type (hence for a fixed $\bH^*_{top,0}(M)$) and analytic type $(X,o)$ supported on it
find special properties of
 $\bH^*_{an,0}(X,o)$ (and of the morphism $\bH^*_{an,0}(X,o)\to \bH^*_{top,0}(M)$), which might characterize the
 classification  from part (a).}
\end{problem}

\noindent
Several examples support the following conjecture (see e.g. \ref{ex:CONTtwocuspsAN}), which makes Problem
\ref{prob:4.7.4} more precise.

\begin{conjecture}
Fix a topological type. Then for any analytic type supported on it
$\bH^*_{an,0}(X,o)\to \bH^*_{top,0}(M)$ is injective. At the graded roots level,
the graded graph-morphism $\RR_{top,0}(M)\to \RR_{an,0}( X,o)$ is surjective (at the level of  vertices and edges).

Hence, Problem \ref{prob:4.7.4} reads as follows:
 characterize those graded $\Z[U]$--submodules of $\bH^*_{top,0}(M)$ which appear as $\bH^*_{an,0}(X,o)$.
\end{conjecture}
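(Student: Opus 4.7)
The common thread is Lemma \ref{lem:weightTOPAN}: the identity $w_{an}(l)=\chi(l)-\dim\coker r(l)$ yields $w_{an}\leq w_{top}=\chi$, and hence an inclusion $S_{top,n}\subseteq S_{an,n}$ for every $n\in\Z$. This single inclusion induces both the restriction $\mathfrak{H}^{*}:\bH^{*}_{an,0}(X,o)\to\bH^{*}_{top,0}(M)$ in cohomology and, via $\pi_{0}$, the graph morphism $\RR_{top,0}(M)\to\RR_{an,0}(X,o)$. The plan is to exploit this inclusion together with the Combinatorial Duality Property (Lemma \ref{lem:hsimult}) and the opposite matroid rank inequality of $h^{1}$ to show that cells of $S_{an,n}\setminus S_{top,n}$ create no new connected components and, ultimately, no new higher cohomology classes.

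For surjectivity on vertices of the roots, equivalently injectivity of $\mathfrak{H}^{0}$ (since $H^{0}=\Z^{\pi_{0}}$ and restriction is injective iff the induced map on $\pi_{0}$ is onto), the task is to show that every connected component $C$ of $S_{an,n}$ meets $S_{top,n}$. I would generalize \ref{bek:CONSEQW}(8): descend inside $C$ to a local $w_{an}$-minimum $C'$, pick a minimal lattice point $m\in C'$, and invoke the CDP to deduce $(Z_{K}-m,E_{v})\leq 0$ for all $E_{v}\subseteq|m|$, so that $\chi(m)\leq 0$. For $n\geq 0$ this produces the required element of $S_{top,n}\cap C$. The delicate case $n<0$ first requires proving $\min w_{an}=\min\chi$; for this I would compare the Euler characteristic identities $eu(\bH^{*}_{an,0})=p_{g}$ from Theorem \ref{th:euANLAT} and $eu(\bH^{*}_{top,0})=\sw_{\sigma_{can}}(M)-(Z_{K}^{2}+|\cV|)/8$ from Theorem \ref{th:ECharLC}, write both in the form $-\min w+\rank\bH^{*}_{red}$, and squeeze the reduced parts using the inclusion of spaces. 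Surjectivity on edges of the roots is then automatic from the compatibility of the $\pi_{0}$-maps at consecutive levels $n$ and $n+1$.

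The genuinely hard step is injectivity of $\mathfrak{H}^{q}$ for $q\geq 1$. I would attack the long exact sequence of the pair $(S_{an,n},S_{top,n})$ and try to collapse cubes of $S_{an,n}\setminus S_{top,n}$ onto $S_{top,n}$ by cube-by-cube deformation retracts mirroring the proof of Theorem \ref{th:REDAN}, now with the r\^{o}le of a B$_{an}$-set played by the \emph{analytic obstruction locus} $\{\,l:\coker r(l)\neq 0\,\}$. The chief difficulty is that this locus is not combinatorial and need not localize on any small subset of $\cV$, so no uniform reduction in the spirit of Proposition \ref{prop:AnLatAR} can be expected in full generality. Accordingly, I would first try to establish the conjecture under the supplementary hypotheses of Example \ref{ex:ARAN} ($\Gamma$ almost rational, or the SWIC for $\sigma_{can}$), where $\mathfrak{H}^{*}$ is in fact an isomorphism by Proposition \ref{prop:AnLatAR}, and then attempt an inductive reduction from there toward the general case.
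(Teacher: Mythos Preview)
The statement you are addressing is a \emph{Conjecture} in the paper, not a theorem; the paper offers no proof, only supporting examples (notably \ref{ex:CONTtwocuspsAN}). So there is nothing to compare your argument against --- you are attempting to settle an open problem.

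Your plan contains a step that is provably false. To extend the $\pi_0$-surjectivity from the range $n\geq 0$ (already done in \ref{bek:CONSEQW}(8)) down to $n<0$, you propose to show $\min w_{an}=\min\chi$. But Example \ref{ex:ELLipticAN} exhibits a numerically Gorenstein elliptic singularity (elliptic length $m=2$, non-Gorenstein analytic type with $p_g=2$) for which $\min\chi=0$ while $\min w_{an}=-1$; the paper explicitly flags this as ``surprising'' and poses Question \ref{question:AN} precisely about the possible values of $\min w_{an}$ in the a priori range $1-p_g\leq\min w_{an}\leq\min\chi$. Your Euler-characteristic squeeze cannot repair this: $eu(\bH^*_{an,0})=p_g$ and $eu(\bH^*_{top,0})=\sw_{\sigma_{can}}(M)-(Z_K^2+|\cV|)/8$ are in general \emph{different} numbers (they coincide only when SWIC holds for $\sigma_{can}$), so comparing the two decompositions $-\min w+\rank\bH^*_{red}$ gives no constraint forcing the minima to agree.

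Worse, that same example shows the conjecture as literally stated already fails: for $\min w_{an}\leq n<\min\chi$ the space $S_{top,n}$ is empty while $S_{an,n}$ is not, so the restriction $H^0(S_{an,n},\Z)\to H^0(S_{top,n},\Z)=0$ has nonzero kernel and $\mathfrak{H}^0$ is not injective; dually, $\RR_{an,0}$ has vertices at level $n$ with no preimage in $\RR_{top,0}$. Whatever the intended refinement of the conjecture is (perhaps injectivity on reduced parts, or surjectivity of the root map in the overlapping grading range), it is not what is written, and your outline does not prove it either.
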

Note that once $M$ is fixed, there are only finitely many graded $\Z[U]$--modules $\bH^*_{an,0}(X,o)$ and graded roots
$\RR_{an,0}( X,o)$ which satisfy this Conjecture.

\subsection{Examples}\label{ss:anEx}

\notation\label{9zu1} 
  Consider the graded $\Z[U]$-module
$\calt:=\Z[U,U^{-1}]$, and  (following \cite{OSzP}) denote by
 $\calt_0^+$ its quotient by the submodule  $U\cdot \Z[U]$.
This has a grading in such a way that $\deg(U^{-d})=2d$ ($d\geq
0$). Similarly, for any $n\geq 1$,  the quotient of $U^{-(n-1)}\cdot \Z[U]$
by $U\cdot \Z[U]$ (with the same grading) defines the  graded module
$\calt_0(n)$. Hence, $\calt_0(n)$, as a $\Z$-module, is freely
generated by $1,U^{-1},\ldots,U^{-(n-1)}$, and has finite
$\Z$-rank $n$.

More generally, for any graded $\Z[U]$-module $P$ with
$d$-homogeneous elements $P_d$, and  for any  $r\in\Q$,   we
denote by $P[r]$ the same module graded (by $\Q$) in such a way
that $P[r]_{d+r}=P_{d}$. Then set $\calt^+_r:=\calt^+_0[r]$ and
$\calt_r(n):=\calt_0(n)[r]$. Hence, for $m\in \Z$,
$\calt_{2m}^+=\Z\langle U^{-m}, U^{-m-1},\ldots\rangle$ as a $\Z$-module.

\begin{example}\label{ex:EATROOTAN} We claim that the following facts are equivalent:
(i){\bf  $(X,o)$ is rational}, (ii) $\RR_{an,0}(X,o)=\RR_{(0)}$, (iii)
$\bH^*_{an,0}(X,o)=\calt^+_0$, (iv) $\bH^*_{an,0}(X,o)=\calt^+_{2m}$ for some $m$, (v)
$S_{an,0}$ is contractible.

Indeed, if $(X,o)$ is rational then $h^1(\calO_l)=0$ for $l>0$, hence $w_{an}$ is increasing.
In particular, any nonempty $S_{an,n}$ can be contracted to the origin, and the analytic root is $\RR_{(0)}$,
 and $\bH^*_{an,0}=\calt^+_0$. (ii)$\Rightarrow$(iii)$\Rightarrow$(iv)$\Rightarrow$(v) are clear.
Assume that (v) holds. 
Since $w_{an}(E_v)=\hh(E_v)-h^1(\cO_{E_v})=1$, the connectivity of $S_{an,0}$ implies that
$w_{an}|_{L_{>0}}>0$. Hence $\chi |_{L_{>0}}>0$ too, and $(X,o)$ is rational by Artin's criterion.

The above criterions are equivalent with the vanishing of the reduced cohomology $\bH^*_{an,red,0}(X,o)=0$ too.
\end{example}

\begin{example}\label{NNAN}
Consider the hypersurface singularity with nondegenerate principal part from Example \ref{ex:twonodesB}.
In this case $\bH^1_{an,0}$ is nonzero. In this Newton nondegenerate suspension case
$\hh$ is  computed combinatorially using the weighted lattice points under the Newton diagram, and
$\hh^\circ (l)=\hh(Z_K-l)$.

This shows that $\bH^{>0}_{an,0}(X,o)\not=0$ can happen in the analytic case as well.
\end{example}

\begin{example}\label{ex:GENERICANTYPE} {\bf $\bH^*_{an,0}$ for the generic analytic structure.}
Let us fix a {\it non--rational} resolution graph $\Gamma$ with $M(\Gamma)$ a $\Q HS^3$.
Assume that $\tX$ is a resolution space of a singularity $(X,o)$ with dual graph $\Gamma$
 and generic analytic structure in the sense of \cite{LauferDef,NNII}. In \cite{NNII} the following facts are proved:
 (i) $p_g(X,o)=1-\min\,\chi$, \ (ii) $Z_{max}=\max\{ l\,:\, \chi(l)=\min\,\chi\}$, \
  (ii) $Z_{coh}=\min\{ l\,:\, \chi(l)=\min\,\chi\}$. In particular, $Z_{coh}\leq Z_{max}$.

Since $\bH^*_{an,0}$ can be computed in $R(0,Z_{coh})$ (cf. \ref{lem:INDEPAN}), and
$\hh(0)=0$ and  $\hh(l)=1$ for $0<l\leq Z_{max}$ --- hence similar identities hold in $R(0, Z_{coh})$ too ---,
the computation is immediate.
Note also that  $h^1(\calO_{Z_{coh}})=p_g=1-\min\,\chi$.
 Therefore,
  $w_{an}(Z_{coh})=w_{an}(Z_{max})=1-p_g=\min\,\chi$. Furthermore,
  $\bH^0_{an,0}(X,o)\simeq\calt^+_{2\,\min\,\chi}\oplus \calt_0(1)$ and  $\bH^{\geq 1}_{an,0}(X,o)=0$. The analytic graded  root is:

\begin{picture}(300,100)(20,330)

\put(77,380){\makebox(0,0){\small{$0$}}} \dashline{1}(100,350)(140,350)
\put(77,390){\makebox(0,0){\small{$1$}}}\put(70,350){\makebox(0,0){\small{$\min\,w_{an}$}}}
\dashline{1}(100,380)(140,380) \dashline{1}(100,390)(140,390)
\put(120,420){\makebox(0,0){$\vdots$}}
\put(120,365){\makebox(0,0){$\vdots$}}
 \put(120,350){\circle*{3}}
 \put(120,350){\line(0,1){5}}
 \put(120,400){\circle*{3}}
\put(120,390){\circle*{3}} \put(110,380){\circle*{3}}
\put(120,380){\circle*{3}}
\put(120,370){\circle*{3}} \put(120,410){\line(0,-1){40}}
\put(110,380){\line(1,1){10}} 
\put(230,380){\makebox(0,0){\small{$(\RR_{gen}(m),\chic), \ \ \mbox{where}
 \ m=\min\,w_{an}=\min\chi=1-\min\chic$}}}
\end{picture}

\noindent Note that in general $eu(\bH^0_{an,0})\geq 1-\min\,w_{an}$ (use (\ref{eq:euh0})). However, in the present case of the
  generic analytic structure we have the sharp equality:
$eu(\bH^0_{an,0})=p_g=1-\min \, w_{an}=1-\min\,\chi=1-\min\, \chic$.

\end{example}

\begin{notation}\label{not:gen}
A graded root $(\RR,\chic)$ of the shape as in Example \ref{ex:GENERICANTYPE} is denoted by $\RR_{gen}(m)$, where
$m=\min\,\chic$.
\end{notation}

\begin{example}\label{ex:ALLANTHESAME}
Assume that $\Gamma$ is star--shaped with central vertex $-2$,  with five legs, each of them
having one vertex with Euler number $-4$. Then the algorithm from \cite[\S11]{NOSz} (applied for the
canonical spin$^c$--structure) shows that the topological graded root is $\RR_{gen}(-1)$ with
$\min\,w_0=-1$ (cf. Notation \ref{not:gen}).
%
%
%
In fact, that algorithm also shows that that for {\it any} analytic structure $Z_{coh}=x(2)\leq x(3)=Z_{min}$, hence,
the analytic graded root is $R_{gen}(-1)$ independently of the analytic structure.

Note that from   \ref{ex:ARAN}
we also obtain that  $\bH^*_{an,0} (X,o)=\bH^*_{top}(M,-Z_K)$.

 In particular, for this graph,
$\bH^*_{an,0}=\bH^*_{top,0}$ for {\it any} analytic structure supported on $\Gamma$.
\end{example}

\begin{example}\label{ex:CONTtwocuspsAN} Consider the  topological type  fixed by the following
resolution graph $\Gamma$.

\begin{picture}(300,45)(10,0)
\put(125,25){\circle*{4}} \put(150,25){\circle*{4}}
\put(175,25){\circle*{4}} \put(200,25){\circle*{4}}
\put(225,25){\circle*{4}} \put(150,5){\circle*{4}}
\put(200,5){\circle*{4}} \put(125,25){\line(1,0){100}}
\put(150,25){\line(0,-1){20}} \put(200,25){\line(0,-1){20}}
\put(125,35){\makebox(0,0){$-2$}}
\put(150,35){\makebox(0,0){$-1$}}
\put(175,35){\makebox(0,0){$-13$}}
\put(200,35){\makebox(0,0){$-1$}}
\put(225,35){\makebox(0,0){$-2$}} \put(160,5){\makebox(0,0){$-3$}}
\put(210,5){\makebox(0,0){$-3$}}
\end{picture}

\noindent In this case the link is an integral homology sphere and $\bH^1_{top,0}\not=0$, cf. \cite{Nlattice}.

\noindent The list of possible analytic structures
 supported on $\Gamma$ is the following, see   \cite{NO2cusps}:

(i) non-Gorenstein Kulikov analytic type with $p_g=3$  and $Z_{max}=Z_{min}$,

(ii) complete intersection (hence Gorenstein) splice quotient  with $p_g=3$ and $Z_{max}=2Z_{min}$,

(iii) analytic structures with $p_g=2$ and $Z_{coh}\leq Z_{min}<2Z_{min}\leq Z_{max}$ (they are non--Gorenstein).

\noindent
In all these cases $\bH^{\geq 1}_{an,0}=0$, however the modules $\bH^0_{an,0}$ are all different. Below we give the graded roots
(and also the topological root, for comparison).
The needed information regarding $\hh$ and $\hh^\circ$ can be deduced e.g. from \cite{NO2cusps}.

\begin{picture}(300,90)(120,340)

 \put(210,350){\makebox(0,0){\small{topological root}}}
\put(180,380){\makebox(0,0){\small{$0$}}} \put(177,370){\makebox(0,0){\small{$-1$}}}
\dashline{1}(200,370)(240,370)
\dashline{1}(200,380)(240,380)
\put(220,420){\makebox(0,0){$\vdots$}} \put(220,400){\circle*{3}}
\put(220,390){\circle*{3}} \put(210,380){\circle*{3}}
\put(230,380){\circle*{3}} \put(220,380){\circle*{3}}\put(210,370){\circle*{3}}
\put(230,370){\circle*{3}} \put(220,410){\line(0,-1){30}}
\put(210,380){\line(1,1){10}} \put(220,390){\line(1,-1){10}}
\put(210,370){\line(1,1){10}} \put(220,380){\line(1,-1){10}}

\dashline{3}(255,350)(255,420)

 \put(300,350){\makebox(0,0){\small{Gorenstein type}}}
\dashline{1}(280,370)(320,370)
\dashline{1}(280,380)(320,380)
\put(300,420){\makebox(0,0){$\vdots$}} \put(300,400){\circle*{3}}
\put(300,390){\circle*{3}} \put(290,380){\circle*{3}}
\put(310,380){\circle*{3}} \put(300,380){\circle*{3}}
\put(310,370){\circle*{3}}
\put(300,410){\line(0,-1){30}}
\put(290,380){\line(1,1){10}} \put(300,390){\line(1,-1){10}}
\put(300,380){\line(1,-1){10}}

 \put(370,350){\makebox(0,0){\small{Kulikov type}}}
\dashline{1}(350,370)(390,370)
\dashline{1}(350,380)(390,380)
\put(370,420){\makebox(0,0){$\vdots$}} \put(370,400){\circle*{3}}
\put(370,390){\circle*{3}} \put(360,380){\circle*{3}}
\put(370,380){\circle*{3}}\put(360,370){\circle*{3}}
\put(380,370){\circle*{3}}
\put(370,410){\line(0,-1){30}}
\put(360,380){\line(1,1){10}} 
\put(360,370){\line(1,1){10}} \put(370,380){\line(1,-1){10}}

 \put(440,350){\makebox(0,0){\small{$p_g=2$}}}
\dashline{1}(420,370)(460,370)
\dashline{1}(420,380)(460,380)
\put(440,420){\makebox(0,0){$\vdots$}} \put(440,400){\circle*{3}}
\put(440,390){\circle*{3}} \put(430,380){\circle*{3}}
\put(440,380){\circle*{3}}
\put(450,370){\circle*{3}}
\put(440,410){\line(0,-1){30}}
\put(430,380){\line(1,1){10}} 
 \put(440,380){\line(1,-1){10}}
\end{picture}

\noindent The local minima of the topological graded root reflect elements the topological semigroup (Lipman cone)$\calS$
in the  rectangle $R(0, Z_K)$. On the other hand, the local minima
 of each analytic graded root reflect the divisors of functions  from the
analytic semigroup in the corresponding rectangle $R(0, Z_{coh})$
(of the corresponding
analytic type), cf. \ref{bek:CONSEQW}(6).
The above pictures indicate very intuitively which elements of the topological semigroup $\calS$
are not realized as divisors of  functions (do not belong to $\calS_{an}$)
 in different analytic structures: in the Goresntein case $Z_{min}$, in the non--Goresntein cases $Z_K$.
\end{example}

\begin{remark}
When we fix a topological type and we compare the possible graded roots (and analytic lattice cohomologies)
associated with different analytic structures then we realize that  the `simplest' object is
provided by the generic analytic structure. However, it is very hard to identify those analytic structures
which provide the `most complicated' modules/roots. Of course, the candidate for the most complicated module/root
is provided by $\bH^*_{top,0}(M)$ and $\RR_{top,0}(M)$ respectively. However, they are not always
realized by a certain analytic structure, see Example \ref{ex:CONTtwocuspsAN}.
\end{remark}
\begin{problem} {\it
Which $\bH^*_{top,0}(M)$ and $\RR_{top,0}(M)$ is  realized by a certain analytic structure?
If yes, for which (very special) analytic structure are they realized?
(Can this family be identified universally by certain analytic properties?) }
\end{problem}
\begin{remark}\label{rem:sq}
The Gorenstein case from Example \ref{ex:CONTtwocuspsAN}  shows that even for {\bf splice quotient} analytic types
$\bH^q_{an,0}\not=\bH^q_{top,0}$   might happen. (In  \ref{ex:CONTtwocuspsAN} they differ for both $q=0, 1$.)
However, the validity of SWIC guarantees that $eu(\bH^q_{an,0})=eu(\bH^q_{top,0})$ for any splice quotient.
\end{remark}

\begin{problem}{\it
Consider the equisingular family of splice quotient singularities associated with a (convenient) graph $\Gamma$.
Is it true that $\bH^*_{an,0}$ is constant along this family?
If yes (hence $\bH^*_{an,0}$  is determined by $\Gamma)$ describe it combinatorially from $\Gamma$.
(Note that usually it is \underline{not} $\bH^*_{top,0}(M(\Gamma))$, cf. Remark \ref{rem:sq}.)}
\end{problem}
\begin{example}\label{ex:ELLipticAN}  
 Assume that $(X,o)$ is a {\bf numerically Gorenstein elliptic
singularity}  with rational homology sphere link.
In the next discussion we use the  notations  of  \cite{weakly}.

In the Gorenstein case we already know (see \ref{ex:ARAN}) that $\bH^*_{an,0}(X,o)=\bH^*_{top,0}(M)$.

Next, consider a graph with $m=2$. In the Gorenstein case $p_g=3$, and $\bH^*_{an,0}$ is determined above.
Next, assume that $p_g=2$. 
Since $(X_1,o_1)$ is Gorenstein, $h^1(\calO_{C'_1})=2$, hence $Z_{coh}\leq C'_1=C+Z_{B_1}$.
On the other hand, $H^0(\calO_{\tX}(-C_1))=H^0(\calO_{\tX}(-Z_{min}))$, hence $Z_{max}=C_1=Z_{min}+Z_{B_1}$.
Therefore, $Z_{coh}<Z_{max}$, and $w_{an}(Z_{coh})=1-p_g=-1$.

Since $eu=p_g=2$, this shows that the analytic graded root is necessarily
$R_{gen}(-1)$. The cohomology groups are $\bH^{\geq 1}_{an,0}=0$ and $\bH^0_{an,0}=\calt^+_{-2}\oplus \calt_0(1)$.
Surprisingly, even if $\Gamma$ is elliptic with $\min\,\chi=0$, for this analytic structure $\min\,w_{an}<0$.

For the discussion of the general elliptic case see \cite{AgNeEll}.

\end{example}

\begin{question}\label{question:AN}
Let us fix a non-rational topological type $\Gamma$ with $M$ a $\Q HS^3$.
What are the possible values of $\min\,w_{an}$ when we consider all the analytic types supported on $\Gamma$?
(Note that $1-p_g\leq \min\,w_{an}\leq \min\,\chi$.)
\end{question}

\subsection{$\bH^*_{an,0}$ and $p_g$--constant deformations} \label{bek:DEFAN}
\bekezdes
First we formulate the following conjecture. 

\begin{conjecture}\label{conj:defAN}
$\bH^*_{an,0}$ is constant along flat $p_g$--constant deformations of normal surface singularities
(with $\Q HS^3$ links).
\end{conjecture}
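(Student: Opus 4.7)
The plan is to show that, on a simultaneous good resolution $\pi\colon \tilde{\mathcal{X}}\to\Delta$ of the flat family, the weight function $w_0^{(t)}$ on the common lattice $L$ does not depend on $t$; the conjecture then follows since $\bH^*_{an,0}(X_t,o_t)$ is determined by $(L,w_0^{(t)})$. As a first step I would invoke (or take as a preliminary structural hypothesis) the existence of a simultaneous good resolution so that the dual graph $\Gamma$, the lattice $L$, and each effective cycle $l\in L_{\geq 0}$ deform flatly in the family, with $l$ giving a relative Cartier divisor $l^{(t)}\subset\tilde{X}_t$. Such a simultaneous resolution is known for $p_g$-constant deformations in the Gorenstein case (Laufer) and in several other classes (Wahl, Okuma).

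By Lemma \ref{lem:weightTOPAN} one has $w_0^{(t)}(l)=\chi(l)-\dim\coker(r_t(l))$, where $r_t(l)\colon H^0(\calO_{\tilde{X}_t})\to H^0(\calO_{l^{(t)}})$. Since $\chi(l)$ is topological, the task reduces to showing that $\dim\coker(r_t(l))$ is constant in $t$. Via the exact sequence $0\to \calO_{\tilde{X}_t}(-l)\to\calO_{\tilde{X}_t}\to\calO_{l^{(t)}}\to 0$ this cokernel is identified with $\ker\bigl(H^1(\calO_{\tilde{X}_t}(-l))\to H^1(\calO_{\tilde{X}_t})\bigr)$. Since $p_g$ is constant, $R^1\pi_*\calO_{\tilde{\mathcal{X}}}$ is locally free of rank $p_g$ on $\Delta$, while $R^1\pi_*\calO_{\tilde{\mathcal{X}}}(-l)$ is merely coherent with upper-semicontinuous fiber rank. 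Using the identity $\hh(l)=\chi(l)+h^1(\calO_l)-\dim\coker(r(l))$, the task becomes equivalent to showing that both $t\mapsto h^1(\calO_{l^{(t)}})$ and $t\mapsto\hh^{(t)}(l)$ are constant in $t$.

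The crux is a \emph{propagation principle}: the constancy of $p_g=h^1(\calO_{\tilde{X}_t})$ should force $h^1(\calO_{l^{(t)}})$ to be constant for every $l\in L_{\geq 0}$. The idea is to combine upper-semicontinuity of $h^1(\calO_{l^{(t)}})$ with the opposite matroid rank inequality of \ref{rem:antmatroid} and the identity $h^1(\calO_l)=h^1(\calO_{\min\{l,Z_{coh}\}})$: a jump $h^1(\calO_{l^{(0)}})>h^1(\calO_{l^{(t)}})$ at some $l$ should, by iterated application of the matroid inequality together with the semicontinuity of $Z_{coh}$ itself, propagate to a strict inequality at a cycle $\geq Z_{coh}$, where both sides are forced to equal $p_g$---a contradiction. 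An analogous argument, now using the direct matroid rank inequality of the filtration-induced $\hh$ together with the just-established constancy of $h^1(\calO_{l^{(t)}})$, then forces $\hh^{(t)}(l)$ to be constant, whence $w_0^{(t)}$ is constant.

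The main obstacle will be making the propagation step rigorous: matroid inequalities only yield inequalities, and upgrading to equalities in a flat family requires more delicate base-change and flatness properties of the coherent sheaves $R^i\pi_*\calO_{\tilde{\mathcal{X}}}(-l)$. I would expect the Serre-dual viewpoint of \ref{bek:LauferDual}---expressing $h^1(\calO_l)$ via global two-forms on $\tilde{X}\setminus E$ modulo holomorphic ones, and relativizing over $\Delta$---to be the right framework for converting semicontinuity into strict constancy. A secondary obstacle is the simultaneous-resolution hypothesis itself, which is open in full generality beyond the cases cited; in its absence, the conjecture should be conditioned on equisingularity or formulated via formal/analytic families of resolutions.
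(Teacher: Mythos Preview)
The statement you are trying to prove is explicitly formulated in the paper as a \emph{conjecture}; the paper gives no proof, only supporting examples. So there is nothing to compare your argument against.

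That said, your outline has a genuine structural gap at the very first step. You take as a preliminary hypothesis the existence of a simultaneous good resolution with a common dual graph $\Gamma$ and common lattice $L$. But $p_g$-constant deformations need not admit this: Example~\ref{ex:Topjumps} in the paper exhibits a $p_g$- and $K^2_{min}$-constant deformation $x^7+y^5+z^3+tx^4y^2=0$ in which the minimal good resolution graph (hence the lattice $L$ and even the link $M$) is different for $t=0$ and $t\neq 0$. Laufer's result you cite concerns \emph{weak} simultaneous resolution, which does not preserve the exceptional configuration. So the whole framework in which you compare the weight functions $w_0^{(t)}$ on a fixed lattice collapses for precisely the deformations the conjecture is meant to cover. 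Any genuine approach must allow the lattice to change and explain why the resulting lattice cohomologies nevertheless coincide --- this is the substance of the conjecture, not a technical add-on.

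Even granting a common lattice, your ``propagation principle'' is not an argument: upper-semicontinuity plus the matroid inequalities gives inequalities in one direction only, and you yourself note that upgrading these to equalities is the crux. In particular, constancy of $p_g$ gives constancy of $h^1(\calO_l)$ only for $l\geq Z_{coh}$, and $Z_{coh}$ can itself jump; nothing you wrote forces the intermediate values $h^1(\calO_{l^{(t)}})$ (or $\hh^{(t)}(l)$) to be locally constant. Without a mechanism that produces a lower bound matching the upper-semicontinuous upper bound, the step remains a hope rather than a proof.
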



\begin{example}\label{ex:CONJANNSIpelda}
Consider the superisolated singularity $(X,o)$
associated with a rational projective plane curve $C$ degree 5 with two cusps, both with
one  Puiseux pair, namely  (3,4) and (2,7) respectively  (for notations see \cite{Spany}.
Denote the two singular  points of $C$ by $p$ and $q$ respectively.
The curve $C$ is projective equivalent with
$y^3z^2+x^4z+x^3yz-x^2y^2z/2-x^5/4+x^4y/16$ \cite{Y4}. The resolution graph of $(X,o)$ is

\begin{picture}(300,45)(20,0)
\put(125,25){\circle*{4}} \put(150,25){\circle*{4}}
\put(175,25){\circle*{4}} \put(200,25){\circle*{4}}
\put(225,25){\circle*{4}} \put(150,5){\circle*{4}}
\put(200,5){\circle*{4}} \put(100,25){\line(1,0){175}}
\put(150,25){\line(0,-1){20}} \put(200,25){\line(0,-1){20}}
\put(125,35){\makebox(0,0){\small{$-2$}}}
\put(150,35){\makebox(0,0){\small {$-1$}}}
\put(175,35){\makebox(0,0){\small{$-31$}}}
\put(200,35){\makebox(0,0){\small{$-1$}}}
\put(225,35){\makebox(0,0){\small{$-3$}}} \put(160,5){\makebox(0,0){\small{$-4$}}}
\put(210,5){\makebox(0,0){\small{$-2$}}} \put(100,25){\circle*{4}}
\put(250,25){\circle*{4}} \put(275,25){\circle*{4}}
\put(100,35){\makebox(0,0){\small{$-2$}}}
\put(250,35){\makebox(0,0){\small{$-2$}}}
\put(275,35){\makebox(0,0){\small{$-2$}}}
\end{picture}

This is a numerically Gorenstein graph, and we can apply the topological reduction theorem for
 the set of nodes.  Then the topological lattice cohomology
computation is reduced to this  2-dimensional rectangle $R((0,0),(30,34))$. For the complete list of
 $\chi$-weights  see \cite{LThesis}.  At $(0,0)$ and $(30,34)$
one has two symmetric  local minima.
They will generate a summand $\calt_0(1)^2$ in $\bH^0_{top}(\Gamma,-Z_K)$.
The other generators and relations can be read from
the rectangle $R((12,14),(18,20))$ with the corresponding $\chi$--values:\\

{\small

\hspace*{3cm}$\begin{matrix}
{\bf -2} & {\bf -2} & {\bf -3} & -4 & -4 & -4 & \mathbf{-5} \\
{\bf -2} & -1 & {\bf -2} & -3 & -3 & -3 & -4\\
{\bf -3} & {\bf -2} & {\bf -2} & -3 & -3 & -3 & -4\\
-3 & -2 & -2 & -2 & -2 & -2 & -3\\
-4 & -3 & -3 & -3 & {\bf -2} & {\bf -2} & {\bf -3}\\
-4 & -3 & -3 & -3 & {\bf -2} & -1 & {\bf -2}\\
\mathbf{-5} & -4 & -4 & -4 & {\bf -3} & {\bf -2} & {\bf -2}\end{matrix}$  }\\

The two boldface $\mathbf{ -5}$'s are new generators of $\bH^0_{top}(M, -Z_K)  $ of degree $-10$,
the middle $-2$ is a saddle point corresponding to the meeting of the two long legs of the graded root,
and the boldface loops generate $\bH^1_{top}(M, -Z_K)  $.
Hence $\bH^0_{top}(\Gamma, -Z_K)=
\calt^+_{-10}\oplus \calt_{-10}(3)\oplus \calt_0(1)^2$ and
 $\bH^1_{top}=\calt_{-2}(1)^2$ (see also
 \cite{NSig}).  The topological (canonical) graded root is

\begin{picture}(200,100)(-100,320)

\dashline{1}(40,410)(160,410) \dashline{1}(40,400)(160,400)
\put(20,380){\makebox(0,0){\small{\ \ 0}}} \dashline{1}(40,390)(160,390)
\put(20,370){\makebox(0,0){\small{$-1$}}} \dashline{1}(30,380)(160,380)
\put(20,360){\makebox(0,0){\small{$-2$}}} \dashline{1}(40,370)(160,370)
\put(20,350){\makebox(0,0){\small{$-3$}}} \dashline{1}(40,360)(160,360)
\put(20,340){\makebox(0,0){\small{$-4$}}} \dashline{1}(40,350)(160,350)
\dashline{1}(40,340)(160,340) \dashline{1}(30,330)(160,330)
\put(20,330){\makebox(0,0){\small{$-5$}}} \put(90,380){\circle*{3}}
\put(110,380){\circle*{3}} \put(100,390){\circle*{3}}
\put(100,400){\circle*{3}}
\put(100,360){\circle*{3}}\put(100,370){\circle*{3}}\put(100,380){\circle*{3}}
\put(90,350){\circle*{3}} \put(110,350){\circle*{3}}
\put(80,340){\circle*{3}} \put(120,340){\circle*{3}}
\put(70,330){\circle*{3}} \put(130,330){\circle*{3}}
\put(100,410){\line(0,-1){50}} \put(100,390){\line(1,-1){10}}
\put(100,390){\line(-1,-1){10}} \put(100,360){\line(-1,-1){30}}
\put(100,360){\line(1,-1){30}}
 \put(20,410){\makebox(0,0)[t]{$\chi$}}
\end{picture}

The point is that  there exists a $K_{min}^2$-- and $p_g$--constant deformation from $(X,o)$ to a singularity $(X_{5,5,6},o)$,
(equisingular to the Brieskorn hypersurface $x^5+y^5+z^6$), whose link is $\Sigma(5,5,6)$.

Indeed,
assume that the rational unicuspidal curve is given by $f_d(x,y,z)=0$ in $\C{\mathbb P}_2$.
We can fix the homogeneous coordinates in $\C{\mathbb P}_2$
in such a way that $z=0$ intersects $C$ generically.
A possible choice for the superisolated singularity
$f:(\C^3,0)\to(\C,0)$ (up to equisingularity) is $f=f_d+z^{d+1}$. Write $f_d$ as $\sum_{i=0}^d  g_{d-i}(x,y)z^i$.
Then $g_d$ is a product of $d$ linear
factors corresponding to the  points $C\cap \{z=0\}$, hence the germ
$g_d:(\C^2,0)\to (\C,0)$ is
equisingular with $(x,y)\mapsto x^d+y^d$.
Next, consider the following deformation $f_t:(\C^3,0)\to (\C,0)$ of isolated hypersurface germs, given by $f_t(x,y,z)=f_d(x,y,tz)+z^{d+1}=
\sum_i g_{d-i}(x,y)z^it^i+z^{d+1}$. For $t\not=0$ the deformation is
$\mu$-constant, the embedded topological type stays also constant, and it is equivalent
(up to such equivalences) to the type of  $f$. However, for $t=0$ it is
equivalent (in similar sense) to the germ $x^d+y^d+z^{d+1}$.

 The minimal good
resolution graph of $X_{d,d,d+1}$ is star-shaped with a $(-1)$ central vertex, and
it has $d$ identical legs, each consisting of one vertex with self-intersection number
$-(d+1)$. We invite the reader to verify that the deformation is $K_{min}^2$--constant (that means that $Z_K^2$ computed in the
minimal resolution is $t$--independent), and also $p_g$--constant.

 Since $(X_{5,5,6},o)$ is AR and for it the SWIC holds, by \ref{ex:ARAN}
$\bH^*_{an,0}(X_{5,5,6},o)=\bH^*_{top,0}(\Sigma(5,5,6))$. That is, $\bH^{\geq 1}_{an,0}(X_{5,5,6},o)=0$ and the
 {\it analytic} graded root of
$(X_{5,5,6},o)$ agrees with the topological graded root of $\Sigma(5,5,6)$.
On the other hand, one verifies (see e.g. \cite{Spany,BLMN2,Book}
that the topological lattice cohomologies and graded roots of  $M(\Gamma)$ and $\Sigma(5,5,6)$ agree
(for the graded root see the root above).  Hence, the conjecture predicts that the same facts are valid for
$(X,o)$ too, just like for  $(X_{5,5,6},o)$. This, in particular means that $\bH^0_{an,0}(X,o)=\bH^0_{top,0}(M)$ and
$\bH^1_{an,0}(X,o)=0$. This is what we will verify next.

By the {\it analytic} Reduction Theorem \ref{th:REDAN} (applied for the two nodes)
we have to complete the $w_{an}$--table on $R((0,0),(30,34))$. Since $(X,o)$ is Gorenstein,
$\hh^\circ(l)=\hh(Z_K-l)$, cf. \ref{ex:GorAN},
hence we can concentrate only on $\hh$. Note also that $Z_{max}=Z_{min}$, and the generic line has multiplicities
$(12,14)$ along the nodes. Therefore, $\hh(l)=1$ for any $0<l\leq (12,14)$. Furthermore (by Laufer's algorithm \cite{Laufer72})
$h^1(\calO_{Z_{min}})=6$. Hence, basically it is enough to analyse
the symmetric rectangle $R((12,14), (18,20)) $, as in the above topological case.
The values of $\hh$ can be computed using computation sequences
via the following principles: (a) if $E_n$ is a node, $x(\ell)\in\calS_{an}$ and  $(x(\ell),E_n)=0$
 then ($\dag$) $\hh(x(\ell)+E_v)\geq \hh(x(\ell))+1$; (b)
$\hh$ is constant along a computation sequence which connects $x(\ell)+E_n$ with $s(x(\ell)+E_n)$.
In this rectangle we have the following elements of $\calS_{an}$, all with the above properties:
$x(12,14)$ being the divisor of the  generic line,  $x(15,14)$ of the generic line which contains $p$,
$x(16,14)$ of the tangent line at $p$, $x(12,16)$ of  the generic line which contains $q$,
$x(15,16)$ of  the  line which contains both  $p$ and  $q$, and finally,
$x(12,18)$ of the tangent line at $q$.
(Note also that $\hh((18,20))=\hh(Z_K-Z_{min})=\hh^\circ(Z_{min})=p_g-h^1(\calO_{Z_{min}})=4$,
hence no other semigroup element can  contribute and in $(\dag)$ we always have equality.)
These semigroup places are underlined in the next diagram from left,
which  shows the  $\hh$ values in
the rectangle $R((12,14),(18,20))$:\\

{\small

\hspace*{2cm}$\begin{matrix}
4 & \ \ 4 & \ \ 4 & \ \ 4 & \ \ 4 & \ \ 4 & \ \ 4 \\
4  & \ \ 4 & \ \ 4 & \ \ 4 & \ \ 4 & \ \ 4 & \ \ 4\\
\underline{3}  & \ \ 4 & \ \ 4 & \ \ 4 & \ \ 4 & \ \ 4 & \ \ 4\\
3 & \ \ 4 & \ \ 4 & \ \ 4 & \ \ 4 & \ \ 4 & \ \ 4\\
\underline{2} & \ \ 3 & \ \ 3 & \ \ \underline{3} & \ \ 4 & \ \ 4& \ \ 4\\
2 & \ \ 3 & \ \ 3 & \ \ 3 & \ \ 4 & \ \ 4& \ \ 4\\
\underline{1} & \ \ 2 & \ \ 2 & \ \ \underline{2} & \ \ \underline{3} & \ \ 4 & \ \ 4\end{matrix}$
\hspace*{1cm}$\begin{matrix}
-2 & \ -2 & \ -3 & \ -4 & \ -4 & \ -4 & -5 \\
-2 & -2 & -2 & -3 & -3 & -3 & -4\\
-3 & -2 & -2 & -3 & -3 & -3 & -4\\
-3 & -2 & -2 & -2 & -2 & -2 & -3\\
-4 & -3 & -3 & -3 &  -2 &  -2 &  -3\\
-4 & -3 & -3 & -3 & -2 & -2 &  -2\\
-5 & -4 & -4 & -4 &  -3 & -2 & -2\end{matrix}$  }\\

\noindent
The $w_{an}$--table  is given on the right diagram.
Hence the wished statement can be read from it.

It is instructive to compare the above  analytic $w_{an}$--table with the
$w_{top}=\chi$--table from above.

\vspace{2mm}

We note that this topological type (identified by the graph $\Gamma$) admits a splice quotient analytic structure as well.
WE expect that for this analytic structure
$\bH^*_{an,}(X,o)=\bH^*_{top,0}(M)$ and $\RR_{an,0}(X,o)=\RR_{top,0}(M)$.

\end{example}

\begin{example} The following deformation  was communicated to us by Ignacio Luengo.
Let us consider the deformation of hypersurface singularities
$zy^3+x^5+z^{11}+tx^2y^2=0$, where the deformation parameter $t$ is small.
Along the deformation the following objects stay constant: the (non--degenerate)
Newton diagram,  the Milnor number, the geometric genus,
the link (and even the embedded topological type). The stable resolution graph is

\begin{picture}(300,45)(20,0)
\put(125,25){\circle*{4}} \put(150,25){\circle*{4}}
\put(175,25){\circle*{4}} \put(200,25){\circle*{4}}
\put(100,25){\circle*{4}} \put(150,5){\circle*{4}}

 \put(100,25){\line(1,0){100}}
\put(150,25){\line(0,-1){20}}
\put(100,35){\makebox(0,0){\small{$-3$}}}
\put(125,35){\makebox(0,0){\small{$-2$}}}
\put(150,35){\makebox(0,0){\small {$-1$}}}
\put(175,35){\makebox(0,0){\small{$-17$}}}
\put(200,35){\makebox(0,0){\small{$-3$}}}
 \put(160,5){\makebox(0,0){\small{$-3$}}}
\end{picture}

Since the graph is AR, and  $p_g=\min_{\gamma}\, eu \bH^0_{top}(\gamma, \Gamma, -Z_K)$, cf. \cite{NSig},
 by \ref{ex:ARAN},
$\bH^*_{an,0}=\bH^*_{top,0}$ for any $t$. Since $\bH^*_{top,0}$ is $t$-stable, we obtain that
 $\bH^*_{an,0}$ must stay  $t$-stable as well.
In this example the point is that the deformation is not
$\mu^*$--stable ($\mu_1^{t=0}=11$ while $\mu_1^{t\not=0}=10$) (that is,
the deformation does not admit a strong simultaneous resolution). However, it still supports the above conjecture:
the stability of $\bH_{an,0}^*$.

\end{example}

\begin{example}\label{ex:Topjumps}
Consider the deformation of hypersurface singularities
$(X_t,o)=\{x^7+y^5+z^3+tx^4y^2=0\}$, $t\in (\C,0)$.
If $t=0$ then $(X_{t=0},0)$ is weighted homogeneous with the following minimal resolution graph, where the unmarked
vertices have  $(-2)$ decorations.

\begin{picture}(300,45)(20,0)
\put(125,25){\circle*{4}} \put(150,25){\circle*{4}}
\put(175,25){\circle*{4}} \put(200,25){\circle*{4}}
\put(225,25){\circle*{4}}
\put(350,25){\circle*{4}} \put(250,25){\circle*{4}}
\put(275,25){\circle*{4}} \put(300,25){\circle*{4}}
\put(325,25){\circle*{4}}\put(375,25){\circle*{4}}
\put(275,5){\circle*{4}}

\put(125,25){\line(1,0){250}}
\put(275,25){\line(0,-1){20}}
\put(285,5){\makebox(0,0){\small{$-3$}}}
\end{picture}

For $t\not=0$ the germ $(X_t,0)$ has nondegenerate Newton principal part, its graph is

\begin{picture}(300,45)(-20,0)
\put(125,25){\circle*{4}} \put(150,25){\circle*{4}}
\put(175,25){\circle*{4}} \put(200,25){\circle*{4}}
\put(225,25){\circle*{4}}
 \put(250,25){\circle*{4}}
\put(275,25){\circle*{4}} \put(300,25){\circle*{4}}
\put(225,5){\circle*{4}}
\put(150,5){\circle*{4}}

\put(125,25){\line(1,0){175}}
\put(150,25){\line(0,-1){20}}
\put(225,25){\line(0,-1){20}}
\put(235,5){\makebox(0,0){\small{$-3$}}}
\put(160,5){\makebox(0,0){\small {$-3$}}}
\end{picture}

Note that the deformation modifies the Newton diagram, and also the topological type.
However, along the deformation both $K_{\tX}^2$ and $p_g$ remain constant:
$K_{\tX}^2=-12$ and $p_g=4$.
(I.e., the deformation does not admit a weak simultaneous resolution, but it admits
a very weak simultaneous resolution, cf. \cite{LauferWeak}.)
Note also that both graphs are AR. Since both analytic types are Newton nondegenerate,
for both of them $p_g=\min_{\gamma}\, eu \bH^0_{top}(\gamma, \Gamma, -Z_K)$, cf. \cite{NSig}.
Hence, for any fixed $t$, by Proposition \ref{prop:AnLatAR}
the analytic and topological lattice cohomologies agree. In fact, by AR property
$\bH^{\geq 1}_{an,0}=0$ in all cases. On the other hand, for both $t=0$ and $t\not=0$,
the topological graded roots can be computed by the AR--algorithm  and it turns out that they agree.
Hence $\bH^0_{an,0}$ and the analytic graded root
is independent of $t$ as well.  The (common) graded root  agrees with the topological graded root from the Example
\ref{ex:CONTtwocuspsAN}.

In particular, since $\bH^*_{an,0}$ is $t$--independent, the example supports the Conjecture, and it is also
compatible with Conjecture 11.3.51 from \cite{Book}, which predicts that along such  deformations
the topological graded root stays  constant.
\end{example}

\begin{problem} {\it
Consider the equisingular family of hypersurface singularities with Newton nondegenerate principal part associated with
a Newton diagram $N\Gamma$.
Is it true that $\bH^*_{an,0}$ is constant along this family?
If yes (hence $\bH^*_{an,0}$  is determined by $N\Gamma)$ describe it combinatorially from the Newton diagram $N\Gamma$.}
\end{problem}

\section{Combinatorial lattice cohomology} \label{ss:CombLattice}

In this section we prove several combinatorial statements regarding the lattice cohomology
associated with any weight function with certain combinatorial  properties.

Here we also indicate that the geometric situations where some
analytic weight function can be defined  (hence a lattice cohomology too) is very diverse and rich.

\subsection{The combinatorial setup}\label{ss:combsetup}

\bekezdes \label{bek:comblattice}
Fix  $\Z^s$ with a fixed basis $\{E_v\}_{v\in\cV}$.
Write $E_I=\sum_{v\in I}E_v$ for $I\subset \cV$ and  $E=E_{\cV}$.
Fix also  an element
$c\in \Z^s$, $c\geq E$.
Consider the lattice points $R=R(0,c):=\{l\in\Z^s\,:\, 0\leq l\leq c\}$, and assume that
to each $l\in R$ we assign

(i)   an integer $h(l)$ such that $h(0)=0$ and $h(l+E_v)\geq h(l)$
for any $v$,

(ii)  an integer $h^\circ (l)$ such that $h^\circ (l+E_v)\leq  h^\circ (l)$
for any $v$.


\noindent
Once  $h$ is fixed with (i),
a possible choice for $h^\circ $ is
 $h^{sym}$, where $h^{sym}(l)=h(c-l)$. Clearly, it depends on $c$.

We  consider the set of cubes $\{\calQ_q\}_{q\geq 0}$ of $R$ as in \ref{9complex} and
the weight function
$$w_0:\calQ_0\to\Z\ \ \mbox{by} \ \ w_0(l):=h(l)+h^\circ (l)-h^\circ (0).$$
Clearly  $w_0(0)=0$.
Furthermore, similarly as in \ref{9dEF1}, we define
$w_q:\calQ_q\to \Z$ by $ w_q(\square_q)=\max\{w_0(l)\,:\, l \
 \mbox{\,is a vertex of $\square_q$}\}$. We will use the symbol $w$ for the
 system $\{w_q\}_q$.
 The compatible weight functions define the lattice cohomology $\bH^*(R,w)$.
Moreover, for any increasing path $\gamma$ connecting 0 and $c$
 we also have
 a path lattice cohomology $\bH^0(\gamma,w)$ as in \ref{bek:pathlatticecoh}. Accordingly,  we have the numerical
Euler characteristics  $eu(\bH^*(R,w))$, $eu(\bH^0(\gamma,w))$ and $\min_\gamma eu(\bH^0(\gamma,w))$ too.

\begin{lemma}\label{lem:comblat}  We have   $0\leq eu(\bH^0(\gamma,w))\leq h^\circ (0)-h^\circ (c)$
for any increasing path $\gamma$  connecting  0 to $c$.
 The equality $eu(\bH^0(\gamma,w))=h^\circ (0)-h^\circ (c)$
holds if and only if for any $i$
the differences $h(x_{i+1})-h(x_i)$ and $h^\circ (x_{i})-h^\circ (x_{i+1})$ simultaneously are not nonzero.
\end{lemma}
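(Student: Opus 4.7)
The plan is to reduce the statement to a telescoping computation via the already-cited formula (\ref{eq:pathweights}). Since $w_0(0) = h(0) + h^\circ(0) - h^\circ(0) = 0$, Lemma \ref{9PC2} gives
\[
eu(\bH^0(\gamma,w)) = \sum_{i=0}^{t-1} \max\{0,\, w_0(x_i) - w_0(x_{i+1})\}.
\]
Write $a_i := h(x_{i+1}) - h(x_i)$ and $b_i := h^\circ(x_i) - h^\circ(x_{i+1})$. Since $\gamma$ is increasing, each step is $x_{i+1} = x_i + E_{v(i)}$, so the monotonicity hypotheses (i) and (ii) give $a_i \geq 0$ and $b_i \geq 0$. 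A direct substitution into the definition $w_0(l) = h(l) + h^\circ(l) - h^\circ(0)$ yields $w_0(x_i) - w_0(x_{i+1}) = b_i - a_i$, so
\[
eu(\bH^0(\gamma,w)) = \sum_{i=0}^{t-1} \max\{0,\, b_i - a_i\}.
\]

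The lower bound $eu(\bH^0(\gamma,w)) \geq 0$ is then trivial. For the upper bound, I would use $\max\{0, b_i - a_i\} \leq b_i$ (valid since $a_i \geq 0$) and telescope:
\[
\sum_{i=0}^{t-1} b_i = \sum_{i=0}^{t-1} \bigl(h^\circ(x_i) - h^\circ(x_{i+1})\bigr) = h^\circ(x_0) - h^\circ(x_t) = h^\circ(0) - h^\circ(c).
\]

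Finally, for the equality case, I would observe that because each summand already satisfies $\max\{0, b_i - a_i\} \leq b_i$, the equality $eu(\bH^0(\gamma,w)) = h^\circ(0) - h^\circ(c)$ is equivalent to termwise equality $\max\{0, b_i - a_i\} = b_i$ for every $i$. For a given $i$, this fails precisely when $b_i > 0$ and $a_i > 0$ (in which case $\max\{0,b_i-a_i\} < b_i$); otherwise (i.e.\ when $a_i = 0$ or $b_i = 0$) it holds. Thus the equality is equivalent to the stated condition: for no index $i$ are $a_i = h(x_{i+1}) - h(x_i)$ and $b_i = h^\circ(x_i) - h^\circ(x_{i+1})$ simultaneously nonzero. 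There is no real obstacle here — the argument is purely a formal manipulation of (\ref{eq:pathweights}) combined with the monotonicity axioms (i) and (ii); the only thing to be careful about is keeping the directions of the inequalities straight.
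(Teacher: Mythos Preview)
Your proof is correct and follows essentially the same approach as the paper: both reduce to formula~(\ref{eq:pathweights}) (using $w_0(0)=0$), introduce the nonnegative increments $a_i=h(x_{i+1})-h(x_i)$ and $b_i=h^\circ(x_i)-h^\circ(x_{i+1})$, and telescope. The only difference is in packaging the upper bound: the paper first establishes the auxiliary identity $w_0(c)+2\,eu(\bH^0(\gamma,w))=\sum_i|b_i-a_i|$ and then applies $|b_i-a_i|\le a_i+b_i$, whereas you go straight to $\max\{0,b_i-a_i\}\le b_i$ and sum. Your route is slightly more economical; the paper's route has the minor bonus of exhibiting the symmetric identity involving $\sum_i|w_0(x_i)-w_0(x_{i+1})|$, but for the purposes of this lemma the two are equivalent.
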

\begin{proof} {\it (a)} Since $w_0(0)=0$ we have
$eu(\bH^0(\gamma,w))=-\min\, w_0+{\rm rank} \bH_{red}^0(\gamma,w)\geq 0$.
Next, by  \ref{eq:pathweights} we have  $eu(\bH^0(\gamma,w))=\sum_{i=0}^{t-1}
\max\{0,w(x_i)-w(x_{i+1})\}$.
On the other hand,
$w(c)+\sum_i
\max\{0,w(x_i)-w(x_{i+1})\}=-\sum_i
\min \{0,w(x_i)-w(x_{i+1})\}$, hence
$w(c)+ 2\cdot eu(\bH^0(\gamma,w))=\sum_i |w(x_i)-w(x_{i+1}|=
\sum_i |h(x_{i})+h^\circ (x_i)-h(x_{i+1})-h^\circ (x_{i+1})|\leq
\sum_i ( h(x_{i+1})-h(x_i))+\sum_i (h^\circ (x_i)-h^\circ (x_{i+1}))=h(c)+h^\circ (0)-h^\circ (c)$.
\end{proof}

\begin{remark}\label{rem:ketpelda}
The inequality  $0\leq eu(\bH^*(R,w))$ is not true in general. Take e.g. the following table
($s=2$, $c=(2,2)$)
with $h^\circ =h^{sym}$:

\begin{picture}(100,40)(-30,0)

\put(10,10){\makebox(0,0){\small{$0$}}}
\put(25,10){\makebox(0,0){\small{$0$}}}
\put(40,10){\makebox(0,0){\small{$1$}}}
\put(10,20){\makebox(0,0){\small{$0$}}}
\put(25,20){\makebox(0,0){\small{$1$}}}
\put(40,20){\makebox(0,0){\small{$1$}}}
\put(10,30){\makebox(0,0){\small{$0$}}}
\put(25,30){\makebox(0,0){\small{$1$}}}
\put(40,30){\makebox(0,0){\small{$1$}}}

\put(110,10){\makebox(0,0){\small{$0$}}}
\put(125,10){\makebox(0,0){\small{$0$}}}
\put(140,10){\makebox(0,0){\small{$0$}}}
\put(110,20){\makebox(0,0){\small{$0$}}}
\put(125,20){\makebox(0,0){\small{$1$}}}
\put(140,20){\makebox(0,0){\small{$0$}}}
\put(110,30){\makebox(0,0){\small{$0$}}}
\put(125,30){\makebox(0,0){\small{$0$}}}
\put(140,30){\makebox(0,0){\small{$0$}}}

\put(-10,20){\makebox(0,0){\small{$h:$}}}
\put(90,20){\makebox(0,0){\small{$w_0:$}}}
\put(210,30){\makebox(0,0){\small{$\min w_0=0$}}}
\put(208,10){\makebox(0,0){\small{$eu=-1$}}}
\put(222,20){\makebox(0,0){\small{$\bH^0_{red}=0$,\ $\bH^1=\Z$}}}
\end{picture}

\noindent
The inequality $ eu(\bH^*(R,w))\leq h^\circ (0)-h^\circ (c)$ is not true either, see  e.g. 

\begin{picture}(100,40)(-40,0)

\put(10,10){\makebox(0,0){\small{$0$}}}
\put(25,10){\makebox(0,0){\small{$0$}}}
\put(40,10){\makebox(0,0){\small{$0$}}}
\put(10,20){\makebox(0,0){\small{$0$}}}
\put(25,20){\makebox(0,0){\small{$1$}}}
\put(40,20){\makebox(0,0){\small{$1$}}}
\put(10,30){\makebox(0,0){\small{$0$}}}
\put(25,30){\makebox(0,0){\small{$1$}}}
\put(40,30){\makebox(0,0){\small{$2$}}}

\put(110,10){\makebox(0,0){\small{$0$}}}
\put(125,10){\makebox(0,0){\small{$-1$}}}
\put(140,10){\makebox(0,0){\small{$-2$}}}
\put(110,20){\makebox(0,0){\small{$-1$}}}
\put(125,20){\makebox(0,0){\small{$0$}}}
\put(140,20){\makebox(0,0){\small{$-1$}}}
\put(110,30){\makebox(0,0){\small{$-2$}}}
\put(125,30){\makebox(0,0){\small{$-1$}}}
\put(140,30){\makebox(0,0){\small{$0$}}}

\put(-10,20){\makebox(0,0){\small{$h:$}}}
\put(90,20){\makebox(0,0){\small{$w_0:$}}}

\put(200,20){\makebox(0,0){\small{\mbox{(with $h^\circ =h^{sym}$)}}}}
\end{picture}

\noindent
Then $eu(\bH^0(\gamma,w))=2$, while  $eu(\bH^0(R,w))=eu(\bH^*(R,w))=4$.
\end{remark}


\begin{example}\label{ex:latgrfilt} {\bf Graded and filtered vector spaces.}
In several geometrical constructions we face the following situation:
we have $\Z^s$ and $c$ as above, and  a finite dimensional vector space $M$  with a $\Z^s$-grading  $\{M_{{\bf a}}\}_{{\bf a}}$
 such that $M_{{\bf a}}=0$ whenever either ${\bf a}\not\geq 0$ or ${\bf a}\geq c$.  Let
$\hh$ be the Hilbert function 
$\hh(l)=\sum_{{\bf a}\not \geq l}\,\dim M_{{\bf a}}$ and let $h$ be its restriction to $R(0,c)$.
Then $h(0)=0$ and $h(c)=\dim\, M$.


More generally, assume that $M$ is a finite dimensional vector space endowed  with a decreasing  $\Z^s$-filtration  such that
$F(0)=M$  and $F(c)=0$ and define $\hh(l)=\dim (M/F(l))$ for any $l\geq 0$.
Again, define $h$ as the restriction of
$\hh$ to $R$.

 The $h$--function associated with a filtration satisfies
 the  {\it `matroid rank inequality'}
 \begin{equation}\label{eq:matroid}
 h(l_1)+h(l_2)\geq h(\min\{l_1,l_2\})+h(\max\{l_1,l_2\}), \ \ l_1,l_2\in R.
 \end{equation}
 This implies the {\it `stability property'},
 valid for any $\bar{l}\geq 0$ with $|\bar{l}|\not\ni E_v$
 \begin{equation}\label{eq:stability}
 h(l)=h(l+E_v)\ \ \Rightarrow\  \ h(l+\bar{l})=h(l+\bar{l}+E_v).
 \end{equation}
 Such a filtration might appear as follows. Take a ring $\cO$,  endowed with  $s$ valuations
 $\frakv_i :\cO\to \Z_{\geq 0}\cup \{\infty\}$ and set for any $l=(l_1,\ldots, l_s)\in \Z^s$ the ideal
 $F_{\cO}(l):=\{f\in\cO\,:\, \frakv_i(f)\geq l_i \ \mbox{for all $i$}\}$. Then, for a conveniently chosen
 $c\geq E$  we take $M=\cO/F(c)$ and the filtration of $M$ given by
 $F(l)=(F_{\cO}(l)+F_{\cO}(c))/F_{\cO}(c)$.

 However, sometimes is preferable to keep the original setup $(\cO, \{F_{\cO}(l)\}_l)$ $(l\geq 0)$
since it can be connected directly to several  classical invariants  defined at the level of $\cO$.
 This is possible since
 $\dim ( M/F(l))=\dim \cO/F_{\cO}(l)$  for any $l\in R$.

For example, the {\it valuation semigroup}  is defined as $\calS_{\cO}:=\{(\frakv_1,\ldots ,\frakv_s)(f)\ :\ f\in\cO\}$.
 In our case (once $c$ is fixed) we will be interested only in  its part $\{l\, :\  l\not\geq c\}$.

In the case of the divisorial filtration of surface singularities, or
in the case of the valuative filtration associated with plane curve singularities (via its normalization),
  $\calS_{\cO}$ and the Hilbert function $\{\hh(l)\}_{l\geq 0}$ are related as follows:
  \begin{equation}\label{eq:Sfromh}
  \calS_{\cO}=\{l\ :\  \hh(l+E_v)>\hh(l)\ \mbox{for all $v$}\};
  \end{equation}
  and, for any $v\in\cV$,
\begin{equation}\label{eq:hfromS}
\hh(l+E_v)>\hh(l) \ \mbox{exactly when there exists $s\in \calS_{\cO}$ with  $s\geq l$, $s_v=l_v$.}
  \end{equation}
Additionally, in the presence of certain (e.g. Gorenstein)  duality, the ring $\cO$ and the filtration $F_\cO$
 might have the following properties:

 \vspace{2mm}

 \noindent {\it Combinatorial Duality Property of \ $\calS_\cO$}:
 \begin{equation}\label{eq:CombGorS}
 \mbox{there is no   $v\in \cV$ and $s\in\calS_\cO$ so that $s_v=c_v-1$ and $s\geq c-E_v$;}
 \end{equation}
 \noindent {\it Combinatorial Duality Property of \ $\hh$}:
 \begin{equation}\label{eq:CombGorh}
 \left\{
 \begin{array}{ll}\mbox{there is no  $v\in \cV$ and $l\in R$ so that}\\
  \mbox{$\hh(l+E_v)>\hh(l)$ and $\hh(c-l)>\hh(c-l-E_v)$.}\end{array}\right.
 \end{equation}
 Note that  in the presence of
 (\ref{eq:Sfromh}), (\ref{eq:hfromS}) the properties  (\ref{eq:CombGorS}) and
   (\ref{eq:CombGorh}) are equivalent (since  \, $0\in\calS_{\cO}$).
   \end{example}

\bekezdes\label{bek:LCgen}
In order to analyse more general cases, which are
 not necessarily provided by  filtrations, we  wish to adjust  the above
 properties
 in the  language of an arbitrary system  $(h,h^\circ,R)$ as in \ref{bek:comblattice}.
\begin{definition}\label{def:COMPGOR}
Fix  $(h,h^\circ,R)$ as in \ref{bek:comblattice}.
 We say that the pair $h$ and $h^\circ$ satisfy the `Combinatorial Duality  Property' (CDP) if
$h(l+E_v)-h(l)$ and $h^\circ (l+E_v)-h^\circ (l)$ simultaneously cannot be nonzero
for $l,\, l+E_v\in R$. Furthermore,
 we say that $h$  satisfies  the CDP  if
 the pair $(h,h^{sym})$ satisfies  it.
\end{definition}

\begin{example}\label{ex:CDP3pelda}
(1) If $\phi$ is the resolution of a normal surface singularity, $\hh$ is associated with the divisorial filtration and
$\hh^\circ(l)=p_g-h^1(\calO_l)$  then  the pair $(\hh,\hh^\circ)$ satisfies
the CDP by
\ref{lem:hsimult}.

(2)  If $\phi$ is the resolution of a normal surface singularity with $Z_K\in L$. If
 $\hh$ is given by  the divisorial filtration, and $\hh^{sym}(l)=\hh(Z_K-l)$,  then
 $\hh(l+E_v)-\hh(l)$ and $\hh^{sym} (l)-\hh^{sym} (l+E_v)$ cannot be simultaneously nonzero.
Indeed, otherwise there exists $s', s''\in\calS_{an}$ such that ${\rm div}_E(s')\geq l$, $s'_v=l_v$,
 ${\rm div}_E(s'')\geq Z_K-l- E_v$, $s''_v=(Z_K-l)_v$. Hence,
${\rm div}_E(s's'')\geq Z_K-E_v$ with equality at $E_v$--coordinate.
 But this contradicts
$H^0(\calO_{E_v}(-Z_K+E_v)) =0$. (Compare with (\ref{eq:CombGorS})
applied for  $s=s_1s_2\in \calS_{an}$  and  $c=Z_K$.)

(3) Let $\hh$ be the  Hilbert function  associated with a Gorenstein  curve singularity
$(C,o)$ (via valuations given by the normalization).
Let $c$ be the conductor. Then  $\hh(l+E_v)-\hh(l)\in \{0,1\}$. Furthermore,
 $\hh(l+E_v)-\hh(l)=1$ if and only if
$\hh^{sym} (l)-\hh^{sym} (l+E_v)=0$. This follows from  the identity $h(c-l)-h(l)=\delta(C)-\sum_vl_v$, cf. \cite{cdk}.
For the  general treatment of the analytic lattice cohomology associated with curve singularities see
\cite{AgNeCurves}.


\end{example}
\begin{example}\label{ex:rankone} {\bf Rank one case.} Take $R$ and $h$ as in \ref{bek:comblattice}.
Assume that $s=1$.
In this case, from the point of view of $h$,
 we can assume that we are in the graded vector space case, cf. \ref{ex:latgrfilt}.
Indeed,  in this case $h$ is associated with  $M=\C^{h(c)}$ graded as  
$M_l=\C^{h(l+1)-h(l)}$ for $l<c$ and $M_c=0$.  Moreover, if $h^\circ=h^{sym}$,  then
$$w_0(l)=\sum_{s<l} \,\dim M_s+ \sum_{s<c-l}\, \dim M_s \ -h(c).$$
If  $h$ satisfies the CDP   then
(cf. \ref{lem:comblat})
$$w_0(l+1)-w_0(l)=\left\{\begin{array}{ll}
\ \ \ \ \dim\, M_l & \mbox{if $h(l+1)>h(l)$}\\
-\dim\, M_{c-1-l} \ \ \ \  &  \mbox{if \  $h(c-l)>h(c-l-1)$}\\
\ \ \ \ 0 & \mbox{otherwise.}
\end{array}\right.$$
Therefore, by a similar identity as in (\ref{eq:Ecal})
\begin{equation}\label{eq:eurankone}
eu ( \bH^*(R,h))=\sum_{l=0}^{c-1}
\big(\, w_1([l,l+1])-w_0(l)\,\big)=\sum_{l=0}^{c-1}\dim \, M_l=\dim\, M.
\end{equation}
\end{example}
\begin{definition}\label{def:comblat}
We say that the pair   $(h, h^\circ) $ satisfy the

(a) {\it `path eu-coincidence'} if $eu(\bH^0(\gamma,w))=h^\circ (0)-h^\circ (c)$
for any increasing path $\gamma$.

(b)  {\it `eu-coincidence'} if $eu(\bH^*(R,w))=h^\circ (0)-h^\circ (c)$.
\end{definition}

\begin{remark}\label{rem:UJ}
In the Example \ref{ex:twonodesB}
we  present  a  $w_0$--rectangle  $R=R((0,0),(14,14))$
can be realized as the $w_0$-table associated with a certain $h$ and $h^\circ=h^{sym}$
(provided by a
graded vector space). This diagram satisfies both the `path eu-coincidence' and `eu-coincidence'
 properties, and it shows
the following two facts.

Even if $h$ satisfies the path eu-coincidence (and $h^\circ =h^{sym}$),
in general it is not true that $\bH^0(\gamma,w)$
is independent of the choice of the increasing path.
(This statement remains valid even if we consider only the symmetric increasing paths, where a
 path $\gamma=\{x_i\}_{i=0}^t$ is symmetric if $x_{t-l}=c-x_l$ for any $l$.)

Even if $h$ satisfies both the path eu-coincidence and the eu-coincidence,
in general it is not true that $\bH^*(R,w)$ equals  any of the path lattice cohomologies
$\bH^0(\gamma,w)$ associated with a certain  increasing  path.
Indeed, in the present mentioned case of \ref{ex:twonodesB} we have $\bH^1(R,w)\not=0$, a fact which does not hold for any
path lattice cohomology. However, amazingly, all the Euler characteristics agree.
\end{remark}

\begin{remark}\label{ex:WCGP}  In the next discussion assume that  $h^\circ =h^{sym}$.

(a)
The CDP of $h$ implies the `path eu-coincidence' of $h$, see
\ref{lem:comblat}.
However, the  CDP of $h$ does not
imply the `eu-coincidence' of $h$.
As an example consider the second case  from \ref{rem:ketpelda}.
 Note that in this case the matroid or stabilization properties are not satisfied by $h$.

(b) On the other hand,  an  eu-coincidence type property cannot be hoped without
(some type of) CDP. Indeed, in the next example
 $s=2$, $c=(2,2)$, and $h$ is associated with a
graded vector space of dimension 2  supported in $(0,1)$ and $(1,2)$.
In this case $h(c)=2$,  $eu(\bH^0(R,w))=0$ and for any symmetric increasing path
$eu(\bH^0(\gamma,w))=0$ too, and for
any  non-symmetric paths
$eu(\bH^0(\gamma,w))=1$.

\begin{picture}(100,40)(-40,0)

\put(10,10){\makebox(0,0){\small{$0$}}}
\put(25,10){\makebox(0,0){\small{$1$}}}
\put(40,10){\makebox(0,0){\small{$2$}}}
\put(10,20){\makebox(0,0){\small{$0$}}}
\put(25,20){\makebox(0,0){\small{$1$}}}
\put(40,20){\makebox(0,0){\small{$2$}}}
\put(10,30){\makebox(0,0){\small{$1$}}}
\put(25,30){\makebox(0,0){\small{$1$}}}
\put(40,30){\makebox(0,0){\small{$2$}}}

\put(110,10){\makebox(0,0){\small{$0$}}}
\put(125,10){\makebox(0,0){\small{$0$}}}
\put(140,10){\makebox(0,0){\small{$1$}}}
\put(110,20){\makebox(0,0){\small{$0$}}}
\put(125,20){\makebox(0,0){\small{$0$}}}
\put(140,20){\makebox(0,0){\small{$0$}}}
\put(110,30){\makebox(0,0){\small{$1$}}}
\put(125,30){\makebox(0,0){\small{$0$}}}
\put(140,30){\makebox(0,0){\small{$0$}}}

\put(-10,20){\makebox(0,0){\small{$h:$}}}
\put(90,20){\makebox(0,0){\small{$w_0:$}}}
\end{picture}

\end{remark}

\subsection{The Euler characteristic formulae}\label{ss:combEU}

\begin{theorem}\label{th:comblattice}
Assume that $h$ satisfies the stability property, and the pair $(h,h^\circ)$
satisfies the Combinatorial Duality  Property. Then $(h,h^\circ)$
satisfies both the path eu- and the eu-coincidence properties:  for any increasing $\gamma$  we have
$$eu(\bH^*(\gamma,w))=eu(\bH^*(R,w))=h^\circ (0)-h^\circ (c).$$
\end{theorem}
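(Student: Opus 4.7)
The plan is to prove both equalities via a single combinatorial line of attack, with the path case following almost immediately from Lemma \ref{lem:comblat}. For an increasing path $\gamma=\{x_i\}$, the CDP hypothesis applied at each edge $[x_i,x_{i+1}=x_i+E_{v(i)}]$ says exactly that $h(x_{i+1})-h(x_i)$ and $h^\circ(x_i)-h^\circ(x_{i+1})$ cannot both be nonzero. This is precisely the sharpness condition in Lemma \ref{lem:comblat}, which therefore yields $eu(\bH^*(\gamma,w))=h^\circ(0)-h^\circ(c)$ for every increasing $\gamma$.

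For the rectangle, I would apply the cube-sum formula
\[
eu(\bH^*(R,w))=\sum_{(l,I)\subset R}(-1)^{|I|+1}w_{|I|}(l,I)
\]
from Lemma \ref{bek:LCSW}, and attack the right-hand side by first identifying the maximizing vertex of each cube. The key structural claim to establish is
\[
w_{|I|}(l,I)\;=\;h(l+E_I)+h^\circ(l)-h^\circ(0)\qquad\text{for every cube } (l,I)\subset R.
\]
The upper bound is immediate from monotonicity ($h$ increasing, $h^\circ$ decreasing). For the matching lower bound, set $J:=\{v\in I:h(l+E_v)>h(l)\}$ and show that the vertex $l+E_J$ realizes the claimed value. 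A greedy induction along a path from $l$ to $l+E_J$, adding the elements of $J$ one at a time, gives $h^\circ(l+E_J)=h^\circ(l)$: at each step $h$ strictly increases, so by CDP $h^\circ$ stays constant. An independent induction using the stability of $h$ shows that for every $w\in I\setminus J$ one has $h(l+E_J+E_w)=h(l+E_J)$ — apply stability to the edge $[l,l+E_w]$, where $h$ is constant by the very definition of $J$, with $\bar l=E_J$ (permissible since $w\notin J$); iterating yields $h(l+E_I)=h(l+E_J)$.

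Substituting this formula into the cube sum breaks it into three pieces $\Sigma_1+\Sigma_2+\Sigma_3$ corresponding to the summands $h(l+E_I)$, $h^\circ(l)$, and $-h^\circ(0)$. For $\Sigma_1$, reparametrize by $p:=l+E_I$; for each $p\in R$ the contributing $I$'s are the subsets of $\{v:p_v\geq 1\}$, and the alternating sum $\sum_I(-1)^{|I|+1}$ vanishes unless this set is empty, i.e.\ unless $p=0$, whence $\Sigma_1=-h(0)=0$. For $\Sigma_2$, for fixed $l$ the contributing $I$'s are subsets of $\{v:l_v<c_v\}$, which is nonempty unless $l=c$, giving $\Sigma_2=-h^\circ(c)$. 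For $\Sigma_3$, the total alternating count is $\sum_{(l,I)}(-1)^{|I|+1}=-\chi(R)=-1$ since the rectangle $R$ is contractible, so $\Sigma_3=h^\circ(0)$. Adding the three pieces yields $eu(\bH^*(R,w))=h^\circ(0)-h^\circ(c)$, matching the path answer.

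The main technical obstacle is the structural formula for $w_{|I|}(l,I)$: both hypotheses are used in an essential and intertwined way — CDP propagates between $h$ and $h^\circ$ along edges where one of the two already moves, while stability propagates constancy of $h$ across parallel edges of the cube. Once this identification is in place, the Euler-characteristic computation collapses to elementary inclusion-exclusion on the cube complex of $R$, and the equality with the (already handled) path Euler characteristic is automatic.
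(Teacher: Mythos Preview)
Your overall architecture matches the paper's: both use Lemma \ref{lem:comblat} for the path statement, both reduce the rectangle statement to the structural identity $w_{|I|}(l,I)=h(l+E_I)+h^\circ(l)-h^\circ(0)$ (equivalently $w((l,I))-w(l)=h(l+E_I)-h(l)$), and both finish with the same inclusion--exclusion on the cube sum. Your endgame ($\Sigma_1,\Sigma_2,\Sigma_3$) is correct.

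The gap is in your proof of the structural identity. With $J:=\{v\in I:h(l+E_v)>h(l)\}$ your claim that along any path from $l$ to $l+E_J$ ``at each step $h$ strictly increases'' is false, and consequently $l+E_J$ need not realize the maximum. Take $s=2$, $c=(1,1)$, $h(0,0)=0$, $h(1,0)=h(0,1)=h(1,1)=1$ (this satisfies stability), and $h^\circ(0,0)=h^\circ(1,0)=h^\circ(0,1)=1$, $h^\circ(1,1)=0$ (decreasing). CDP holds on all four edges. For $l=(0,0)$, $I=\{1,2\}$, your $J=\{1,2\}$, so $l+E_J=(1,1)$. Along $(0,0)\to(1,0)\to(1,1)$ the second step has $h$ constant, and indeed $h^\circ(1,1)=0\neq 1=h^\circ(0,0)$; moreover $w(1,1)=0$ while $w_2((l,I))=1$ is realized at $(1,0)$ or $(0,1)$, not at $l+E_J$. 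Knowing $h(l+E_v)>h(l)$ does \emph{not} propagate to $h(l+E_{J'}+E_v)>h(l+E_{J'})$; stability only propagates \emph{equalities}, so the contrapositive goes the other way.

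The paper avoids this by an induction on $|I|$ that peels off one coordinate at a time. One takes a \emph{minimal} $J$ realizing the maximum $w((l,I))$; minimality gives $w(l+E_{J\setminus u})<w(l+E_J)$ for each $u\in J$, which by CDP forces $h(l+E_{J\setminus u})<h(l+E_J)$, and then the \emph{contrapositive} of stability (equality at $l$ would propagate to equality at $l+E_{J\setminus u}$) yields $h(l)<h(l+E_u)$. This single $u$ is then stripped off: $w((l,I))=w((l+E_u,I\setminus u))$, and one applies the inductive hypothesis on the smaller cube. Your $J$ tries to identify the maximizing vertex in one shot, which the hypotheses are not strong enough to justify.
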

\begin{proof}
The identity $eu(\bH^*(\gamma,w))=h^\circ (0)-h^\circ (c)$ follows from \ref{lem:comblat}. Next we focus on the second identity.
We claim that for  any $I\subset \calv$  we have
\begin{equation}\label{eq:KEYIDEN}
w((l,I))-w(l)=h(l+E_I)-h(l).\end{equation}
We use induction over  the cardinality $|I|$ of $I$. If $I=\{v\}$, then $w((l,I))-w(l)=\max\{ 0, w(l+E_v)-w(l)\}$. But if
$h(l+E_v)>h(l)$ then $h^\circ (l+E_v)=h^\circ (l)$
hence $w(l+E_v)-w(l)=h(l+E_v)-h(l)$. Otherwise $w(l+E_v)\leq w(l)$ and $w((l,I))=w(l)$.

Next, assume that $|I|>1$ and $h(l+E_v)=h(l)$ for every  $v\in \calv$. Then by iterated use of the
stability property of $h$, $h(l+E_J)
=h(l)$ for any $J\subset I$. Moreover, $w((l,I))-w(l)=0=h(l+E_I)-h(l)$.

Finally, assume that $|I|>1$ and $h(l+E_v)>h(l)$ for a certain  $v\in \calv$.
This means that $w(l+E_v)>w(l)$, hence $w((l,I))>w(l)$ ($\dag$).
Assume that $w((l,I))$ is  realized for a certain $J\subset I$.
Let $J$ be minimal by this property. By ($\dag$) we know that $J\not=\emptyset$.

By minimality of $J$, $w(l+E_{J\setminus u})<w(l+E_J)$ for any $u\in J$. By CDP $h(l+E_{J\setminus u})<h(l+E_J)$ too,
hence by the stability of $h$  we also have $h(l)<h(l+E_u)$.

In particular we found $u\in I$  such that $h(l)<h(l+E_u)$
(hence $h^\circ (l)=h^\circ(l+E_u)$)  and $w((l, I))=w((l+E_u, I^*))$, where $I^*:= I\setminus u$.
Now,  we use induction applied for
the cube $(l+E_v, I^*)$. In particular,
$w((l,I))-w(l)=
w((l+E_v, I^*))-w(l)= w(l+E_v)+h(l+E_I)-h(l+E_v)-w(l)=
h(l+E_I)-h(l)$.

This ends the proof of the claim.

Let us denote by  $\calQ$ the set of cubes of $R$.
For $eu(\bH^*(R,w))$ we use the  following formula (with identical proof as in  \ref{bek:LCSW})
$$eu(\bH^*(R,w))=\sum_{(l,I)\in\calQ}\, (-1)^{|I|+1} w((l,I)).$$
We will subdivide the lattice points of $R$ into the following disjoint subsets.
For any $0\leq t\leq s$  we denote by $R_t$ the set of those points $l\in R\cap L$ for which
the cardinality of $\{v\in\calv\,:\, l_v=c_v\}$ is $t$. If $l\in R_t$ set $I(l):=\{v\,:\, l_v<c_v\}$.
In particular, the set $\calQ$ is  also a  disjoint union
of the sets
$ \{(l, I)\,:\, l\in R_t, \ I\subset I(l)\}_t$. Then
$$eu(\bH^*(R,w))=\sum_t \, \sum_{l\in R_t}\, \sum_{I\subset I(l)}\, (-1)^{|I|+1} w((l,I)).$$
If $0\leq t<s$  and $l\in R_t$ then $I(l)\not=\emptyset$. In this case for any
$I$-independent (but maybe $l$-dependent) `constant' $a(l) $ we have $\sum _{I\subset I(l)}
(-1)^{|I|+1}a(l)=0$.
Therefore, by (\ref{eq:KEYIDEN}), for any such $l$,
\begin{equation}\label{eq:SUMt} \sum_{I\subset I(l)}\, (-1)^{|I|+1} w((l,I))=
 \sum_{I\subset I(l)}\, (-1)^{|I|+1} h(l+E_I).\end{equation}
 On the other hand, if $t=s$, then $R_t=\{c\}$, $I(c)=\emptyset$, hence
 $\sum _{I\subset I(c)}
(-1)^{|I|+1}((w(c,I))=-w(c)$.
Hence, corresponding to $t=s$  (\ref{eq:SUMt}) fails, i.e. it must be corrected by
 $-w(c)+h(c)=h^\circ (0)-h^\circ(c)$.
Therefore,
\begin{equation}\label{eq:SUMtt}
eu(\bH^*(R,w))=h^\circ (0)-h^\circ(c)\, +\, \sum_{(l,I)\in\calQ}\, (-1)^{|I|+1} h(l+E_I).
\end{equation}
For any fixed $\widetilde{l}\in R$, consider the following   summation over  $\{(l,I)\,:\,
l+E_I=\widetilde{l}\}$:
$$S(\widetilde{l}):=\sum \, (-1)^{|I|+1}h(l+E_I)=h(\widetilde{l})\cdot \sum \, (-1)^{|I|+1}.$$
Then, whenever the cardinality of   $\{(l,I)\,:\,
l+E_I=\widetilde{l}\}$ is $>1$,
the above sum $\sum \, (-1)^{|I|+1}=0$.
Thus,  $S(\widetilde{l})=0$ except when $\widetilde{l}=0$. But, for $\widetilde{l}=0$,
 $S(0)=-h(0)=0$ too. Thus
$eu(\bH^*(R,w))=h^\circ (0)-h^\circ(c)+\sum_{\widetilde{l}}S(\widetilde{l})=h^\circ (0)-h^\circ(c)$.
\end{proof}
\begin{remark}\label{rem:P0AN}
The very same argument as in the previous proof provides the following fact.
Assume that instead of the rectangle $R(0,c)$  we take $L_{\geq 0}$ (i.e., $c=\infty$), and we also have
two functions $h, \, h^\circ:L_{\geq 0}\to \Z$  with the very same properties (i) and (ii) as in \ref{bek:comblattice}.
Furthermore, assume both that  $h$  satisfies the stability property, and the pair $(h,h^\circ)$
satisfies the Combinatorial Duality  Property (as in \ref{th:comblattice}). Then (\ref{eq:KEYIDEN}) implies
\begin{equation}\label{eq:sumsum}\sum_{l\geq 0}\,\sum _I\, (-1)^{|I|+1} w((l,I))\, \bt^{l}=
\sum_{l\geq 0}\,\sum _I\, (-1)^{|I|+1} h(l+E_I)\,\bt^{l}.\end{equation}
In the case of a normal surface singularity as in \ref{ss:anR}, the right hand side of (\ref{eq:sumsum})
is the $(h=0)$--component $P_0(\bt)$ of the multivariable Poincar\'e series (obtained from the Hilbert series),
 cf. \cite{CDG,CHR,NJEMS}. Hence, the above identity recovers $P_0(\bt)$
in terms of  the analytic weighted cubes.
This formula can be compared with its topological analogue:
there is an identical formula (of identical nature) for the topological multivariable series,
which is recovered in terms of  the topological  weighted cubes.
\end{remark}

\subsection{Examples}\label{ss:combExa}

\begin{example}\label{ex:NNGamma} {\bf Graded vector spaces determined by a Newton diagram.}

Assume that $N\Gamma$ is a convenient Newton diagram in $\R_{\geq 0}^{n+1}$ (see e.g. \cite{NSig}).
Let $\triangle^{(1)}$ denote the $n$-dimensional faces of $N\Gamma$, and for each $\sigma\in \triangle^{(1)}$
let $\ell_\sigma$ be the normal  primitive integral vector of the corresponding face $F_\sigma$ (with all
entries positive). Then
$F_\sigma$ is on the affine $n$-plane $\langle \ell_\sigma,p\rangle=m_\sigma$.
 Let $M$ be the $\C$--vector space  generated by lattice points
$p\in \Z_{>0}^{n+1}$, which sit either on or below $N\Gamma$. This means that
 $p\in \Z_{>0}^{n+1}$  should satisfy
$\langle \ell_\sigma,p\rangle\leq m_\sigma$ for at least one $\sigma$.
Then $M\not=0$  if and only if ${\bf 1}=(1,\ldots , 1)$ is such a point; in the sequel we will assume this fact.

We order $\triangle ^{(1)}$ as $\{\sigma_1,\ldots, \sigma_s\}$. We introduce a $\Z^s$-grading of $M$ by
$${\rm deg}(p)=(\, \langle \ell_{\sigma_1}, p-{\bf 1}\rangle, \ldots,
 \langle \ell_{\sigma_s}, p-{\bf 1}\rangle \, )\in \Z^s.$$
Define  $c=(c_1,\ldots , c_s)$ by
 $c_i:= m_{\sigma_i}+1-\langle \ell_{\sigma_i}, {\bf 1}\rangle$.

Since  $p\geq {\bf 1}$ for any such point,
we also have ${\rm deg}(p)\in \Z_{\geq 0}^s$. Moreover, for any $p$ there exists
$\sigma_i$ with $\langle \ell_{\sigma_i}, p\rangle\leq m_{\sigma_i}$, hence
$\langle \ell_{\sigma_i}, p-{\bf 1}\rangle <c_i$. Therefore, ${\rm deg}(p)\not\geq c$.

In particular,  the conditions of \ref{ex:latgrfilt} are satisfied.
\end{example}

\begin{example}\label{ex:2552}
Consider the situation of \ref{ex:NNGamma} with $n=1$ and $N\Gamma$ generated by the lattice points
$(0,7), (2,2), (7,0)$. In fact, this is the Newton diagram of the Newton nondegenerate
plane curve singularity $f=(x^2+y^5)(y^2+x^5)$.
The normal vectors $(2,5)$ and $(5,2)$ of the two faces provide the degrees.
The next diagram shows the points $p$, which generate $M$ as base elements,  and their degrees.

\begin{picture}(300,110)(-100,-20)

\put(-10,0){\line(1,0){100}}
\put(0,-10){\line(0,1){90}}

\dashline{2}(-10,10)(90,10)
\dashline{2}(-10,20)(90,20)
\dashline{2}(-10,30)(90,30)
\dashline{2}(-10,40)(90,40)
\dashline{2}(-10,50)(90,50)
\dashline{2}(-10,60)(90,60)
\dashline{2}(-10,70)(90,70)

\dashline{2}(10,-10)(10,80)
\dashline{2}(20,-10)(20,80)
\dashline{2}(30,-10)(30,80)
\dashline{2}(40,-10)(40,80)
\dashline{2}(50,-10)(50,80)
\dashline{2}(60,-10)(60,80)
\dashline{2}(70,-10)(70,80)
\dashline{2}(80,-10)(80,80)
\put(20,20){\line(5,-2){50}}
\put(20,20){\line(-2,5){20}}
\put(10,10){\circle*{3}}
\put(10,20){\circle*{3}}
\put(10,30){\circle*{3}}
\put(10,40){\circle*{3}}
\put(20,10){\circle*{3}}
\put(20,20){\circle*{3}}
\put(30,10){\circle*{3}}
\put(40,10){\circle*{3}}

\put(160,10){\makebox(0,0){\small{$(0,0)$}}}
\put(160,20){\makebox(0,0){\small{$(5,2)$}}}
\put(160,30){\makebox(0,0){\small{$(10,4)$}}}
\put(160,40){\makebox(0,0){\small{$(15,6)$}}}
\put(185,10){\makebox(0,0){\small{$(2,5)$}}}
\put(185,20){\makebox(0,0){\small{$(7,7)$}}}
\put(210,10){\makebox(0,0){\small{$(4,10)$}}}
\put(237,10){\makebox(0,0){\small{$(6,15)$}}}

\end{picture}

\noindent
The vector $c$ is $(8,8)$:  this is exactly the conductor of the plane curve singularity $f$.
Furthermore,  $\dim M=8$ is the delta-invariant $\delta(f)$ of $f$.
The next tables show the function $h$  and the weight function $w_0$ on $R$.

\begin{picture}(320,120)(-50,-20)

\put(-15,0){\line(1,0){145}}
\put(-5,-10){\line(0,1){100}}

\put(5,-5){\makebox(0,0){\small{$0$}}}
\put(20,-5){\makebox(0,0){\small{$1$}}}
\put(35,-5){\makebox(0,0){\small{$2$}}}
\put(50,-5){\makebox(0,0){\small{$3$}}}
\put(65,-5){\makebox(0,0){\small{$4$}}}
\put(80,-5){\makebox(0,0){\small{$5$}}}
\put(95,-5){\makebox(0,0){\small{$6$}}}
\put(110,-5){\makebox(0,0){\small{$7$}}}
\put(125,-5){\makebox(0,0){\small{$8$}}}

\put(-10,5){\makebox(0,0){\small{$0$}}}
\put(-10,15){\makebox(0,0){\small{$1$}}}
\put(-10,25){\makebox(0,0){\small{$2$}}}
\put(-10,35){\makebox(0,0){\small{$3$}}}
\put(-10,45){\makebox(0,0){\small{$4$}}}
\put(-10,55){\makebox(0,0){\small{$5$}}}
\put(-10,65){\makebox(0,0){\small{$6$}}}
\put(-10,75){\makebox(0,0){\small{$7$}}}
\put(-10,85){\makebox(0,0){\small{$8$}}}

\put(5,5){\makebox(0,0){\small{$0$}}}
\put(5,15){\makebox(0,0){\small{$1$}}}
\put(5,25){\makebox(0,0){\small{$1$}}}
\put(5,35){\makebox(0,0){\small{$2$}}}
\put(5,45){\makebox(0,0){\small{$2$}}}
\put(5,55){\makebox(0,0){\small{$3$}}}
\put(5,65){\makebox(0,0){\small{$4$}}}
\put(5,75){\makebox(0,0){\small{$5$}}}
\put(5,85){\makebox(0,0){\small{$6$}}}

\put(20,5){\makebox(0,0){\small{$1$}}}
\put(20,15){\makebox(0,0){\small{$1$}}}
\put(20,25){\makebox(0,0){\small{$1$}}}
\put(20,35){\makebox(0,0){\small{$2$}}}
\put(20,45){\makebox(0,0){\small{$2$}}}
\put(20,55){\makebox(0,0){\small{$3$}}}
\put(20,65){\makebox(0,0){\small{$4$}}}
\put(20,75){\makebox(0,0){\small{$5$}}}
\put(20,85){\makebox(0,0){\small{$6$}}}

\put(35,5){\makebox(0,0){\small{$1$}}}
\put(35,15){\makebox(0,0){\small{$1$}}}
\put(35,25){\makebox(0,0){\small{$1$}}}
\put(35,35){\makebox(0,0){\small{$2$}}}
\put(35,45){\makebox(0,0){\small{$2$}}}
\put(35,55){\makebox(0,0){\small{$3$}}}
\put(35,65){\makebox(0,0){\small{$4$}}}
\put(35,75){\makebox(0,0){\small{$5$}}}
\put(35,85){\makebox(0,0){\small{$6$}}}

\put(50,5){\makebox(0,0){\small{$2$}}}
\put(50,15){\makebox(0,0){\small{$2$}}}
\put(50,25){\makebox(0,0){\small{$2$}}}
\put(50,35){\makebox(0,0){\small{$3$}}}
\put(50,45){\makebox(0,0){\small{$3$}}}
\put(50,55){\makebox(0,0){\small{$4$}}}
\put(50,65){\makebox(0,0){\small{$4$}}}
\put(50,75){\makebox(0,0){\small{$5$}}}
\put(50,85){\makebox(0,0){\small{$6$}}}

\put(65,5){\makebox(0,0){\small{$2$}}}
\put(65,15){\makebox(0,0){\small{$2$}}}
\put(65,25){\makebox(0,0){\small{$2$}}}
\put(65,35){\makebox(0,0){\small{$3$}}}
\put(65,45){\makebox(0,0){\small{$3$}}}
\put(65,55){\makebox(0,0){\small{$4$}}}
\put(65,65){\makebox(0,0){\small{$4$}}}
\put(65,75){\makebox(0,0){\small{$5$}}}
\put(65,85){\makebox(0,0){\small{$6$}}}

\put(80,5){\makebox(0,0){\small{$3$}}}
\put(80,15){\makebox(0,0){\small{$3$}}}
\put(80,25){\makebox(0,0){\small{$3$}}}
\put(80,35){\makebox(0,0){\small{$4$}}}
\put(80,45){\makebox(0,0){\small{$4$}}}
\put(80,55){\makebox(0,0){\small{$5$}}}
\put(80,65){\makebox(0,0){\small{$5$}}}
\put(80,75){\makebox(0,0){\small{$6$}}}
\put(80,85){\makebox(0,0){\small{$7$}}}

\put(95,5){\makebox(0,0){\small{$4$}}}
\put(95,15){\makebox(0,0){\small{$4$}}}
\put(95,25){\makebox(0,0){\small{$4$}}}
\put(95,35){\makebox(0,0){\small{$4$}}}
\put(95,45){\makebox(0,0){\small{$4$}}}
\put(95,55){\makebox(0,0){\small{$5$}}}
\put(95,65){\makebox(0,0){\small{$5$}}}
\put(95,75){\makebox(0,0){\small{$6$}}}
\put(95,85){\makebox(0,0){\small{$7$}}}

\put(110,5){\makebox(0,0){\small{$5$}}}
\put(110,15){\makebox(0,0){\small{$5$}}}
\put(110,25){\makebox(0,0){\small{$5$}}}
\put(110,35){\makebox(0,0){\small{$5$}}}
\put(110,45){\makebox(0,0){\small{$5$}}}
\put(110,55){\makebox(0,0){\small{$6$}}}
\put(110,65){\makebox(0,0){\small{$6$}}}
\put(110,75){\makebox(0,0){\small{$7$}}}
\put(110,85){\makebox(0,0){\small{$8$}}}

\put(125,5){\makebox(0,0){\small{$6$}}}
\put(125,15){\makebox(0,0){\small{$6$}}}
\put(125,25){\makebox(0,0){\small{$6$}}}
\put(125,35){\makebox(0,0){\small{$6$}}}
\put(125,45){\makebox(0,0){\small{$6$}}}
\put(125,55){\makebox(0,0){\small{$7$}}}
\put(125,65){\makebox(0,0){\small{$7$}}}
\put(125,75){\makebox(0,0){\small{$8$}}}
\put(125,85){\makebox(0,0){\small{$8$}}}


\put(160,0){\line(1,0){145}}
\put(170,-10){\line(0,1){100}}


\put(180,5){\makebox(0,0){\small{$0$}}}
\put(180,15){\makebox(0,0){\small{$1$}}}
\put(180,25){\makebox(0,0){\small{$0$}}}
\put(180,35){\makebox(0,0){\small{$1$}}}
\put(180,45){\makebox(0,0){\small{$0$}}}
\put(180,55){\makebox(0,0){\small{$1$}}}
\put(180,65){\makebox(0,0){\small{$2$}}}
\put(180,75){\makebox(0,0){\small{$3$}}}
\put(180,85){\makebox(0,0){\small{$4$}}}

\put(195,5){\makebox(0,0){\small{$1$}}}
\put(195,15){\makebox(0,0){\small{$0$}}}
\put(195,25){\makebox(0,0){\small{$-1$}}}
\put(195,35){\makebox(0,0){\small{$0$}}}
\put(195,45){\makebox(0,0){\small{$-1$}}}
\put(195,55){\makebox(0,0){\small{$0$}}}
\put(195,65){\makebox(0,0){\small{$1$}}}
\put(195,75){\makebox(0,0){\small{$2$}}}
\put(195,85){\makebox(0,0){\small{$3$}}}

\put(210,5){\makebox(0,0){\small{$0$}}}
\put(210,15){\makebox(0,0){\small{$-1$}}}
\put(210,25){\makebox(0,0){\small{$-2$}}}
\put(210,35){\makebox(0,0){\small{$-1$}}}
\put(210,45){\makebox(0,0){\small{$-2$}}}
\put(210,55){\makebox(0,0){\small{$-1$}}}
\put(210,65){\makebox(0,0){\small{$0$}}}
\put(210,75){\makebox(0,0){\small{$1$}}}
\put(210,85){\makebox(0,0){\small{$2$}}}

\put(225,5){\makebox(0,0){\small{$1$}}}
\put(225,15){\makebox(0,0){\small{$0$}}}
\put(225,25){\makebox(0,0){\small{$-1$}}}
\put(225,35){\makebox(0,0){\small{$0$}}}
\put(225,45){\makebox(0,0){\small{$-1$}}}
\put(225,55){\makebox(0,0){\small{$0$}}}
\put(225,65){\makebox(0,0){\small{$-1$}}}
\put(225,75){\makebox(0,0){\small{$0$}}}
\put(225,85){\makebox(0,0){\small{$1$}}}

\put(240,5){\makebox(0,0){\small{$0$}}}
\put(240,15){\makebox(0,0){\small{$-1$}}}
\put(240,25){\makebox(0,0){\small{$-2$}}}
\put(240,35){\makebox(0,0){\small{$-1$}}}
\put(240,45){\makebox(0,0){\small{$-2$}}}
\put(240,55){\makebox(0,0){\small{$-1$}}}
\put(240,65){\makebox(0,0){\small{$-2$}}}
\put(240,75){\makebox(0,0){\small{$-1$}}}
\put(240,85){\makebox(0,0){\small{$0$}}}

\put(255,5){\makebox(0,0){\small{$1$}}}
\put(255,15){\makebox(0,0){\small{$0$}}}
\put(255,25){\makebox(0,0){\small{$-1$}}}
\put(255,35){\makebox(0,0){\small{$0$}}}
\put(255,45){\makebox(0,0){\small{$-1$}}}
\put(255,55){\makebox(0,0){\small{$0$}}}
\put(255,65){\makebox(0,0){\small{$-1$}}}
\put(255,75){\makebox(0,0){\small{$0$}}}
\put(255,85){\makebox(0,0){\small{$1$}}}

\put(270,5){\makebox(0,0){\small{$2$}}}
\put(270,15){\makebox(0,0){\small{$1$}}}
\put(270,25){\makebox(0,0){\small{$0$}}}
\put(270,35){\makebox(0,0){\small{$-1$}}}
\put(270,45){\makebox(0,0){\small{$-2$}}}
\put(270,55){\makebox(0,0){\small{$-1$}}}
\put(270,65){\makebox(0,0){\small{$-2$}}}
\put(270,75){\makebox(0,0){\small{$-1$}}}
\put(270,85){\makebox(0,0){\small{$0$}}}

\put(285,5){\makebox(0,0){\small{$3$}}}
\put(285,15){\makebox(0,0){\small{$2$}}}
\put(285,25){\makebox(0,0){\small{$1$}}}
\put(285,35){\makebox(0,0){\small{$0$}}}
\put(285,45){\makebox(0,0){\small{$-1$}}}
\put(285,55){\makebox(0,0){\small{$0$}}}
\put(285,65){\makebox(0,0){\small{$-1$}}}
\put(285,75){\makebox(0,0){\small{$0$}}}
\put(285,85){\makebox(0,0){\small{$1$}}}

\put(300,5){\makebox(0,0){\small{$4$}}}
\put(300,15){\makebox(0,0){\small{$3$}}}
\put(300,25){\makebox(0,0){\small{$2$}}}
\put(300,35){\makebox(0,0){\small{$1$}}}
\put(300,45){\makebox(0,0){\small{$0$}}}
\put(300,55){\makebox(0,0){\small{$1$}}}
\put(300,65){\makebox(0,0){\small{$0$}}}
\put(300,75){\makebox(0,0){\small{$1$}}}
\put(300,85){\makebox(0,0){\small{$0$}}}

\put(225,35){\circle{10}}\put(255,55){\circle{10}}
\put(202,20){\framebox(15,10){}}
\put(232,20){\framebox(15,10){}}
\put(202,40){\framebox(15,10){}}
\put(232,40){\framebox(15,10){}}
\put(232,60){\framebox(15,10){}}
\put(262,40){\framebox(15,10){}}
\put(262,60){\framebox(15,10){}}

\put(172,1){\framebox(15,10){}}
\put(292,80){\framebox(15,10){}}

\end{picture}

\noindent
The circled points show the generators of $\bH^1(R,w)$, while the boxes the local minima of $w_0$.
 Hence $\bH^1(R,w)=\calt_{-2}(1)^2$ and $\bH^0(R,w)=\calt^+_{-4}\oplus
 \calt_{-4}(1)^6\oplus \calt_0(1)^2$.
The graded root is

\begin{picture}(300,82)(80,350)

\put(180,380){\makebox(0,0){\small{$0$}}} \put(177,370){\makebox(0,0){\small{$-1$}}}
\put(177,360){\makebox(0,0){\small{$-2$}}}
\dashline{1}(200,370)(240,370)
\dashline{1}(200,380)(240,380) 
\dashline{1}(200,360)(240,360)
\put(220,420){\makebox(0,0){$\vdots$}} \put(220,400){\circle*{3}}
\put(220,390){\circle*{3}} \put(210,380){\circle*{3}}
\put(230,380){\circle*{3}} \put(220,380){\circle*{3}}
\put(220,370){\circle*{3}} \put(220,410){\line(0,-1){40}}
\put(210,380){\line(1,1){10}} \put(220,390){\line(1,-1){10}}

\put(190,360){\circle*{3}} \put(200,360){\circle*{3}}
\put(210,360){\circle*{3}} \put(220,360){\circle*{3}}
\put(230,360){\circle*{3}} \put(240,360){\circle*{3}}
\put(250,360){\circle*{3}}
 \put(220,370){\line(0,-1){10}}
 \put(220,370){\line(1,-1){10}} \put(220,370){\line(-1,-1){10}}
 \put(220,370){\line(2,-1){20}} \put(220,370){\line(-2,-1){20}}
  \put(220,370){\line(3,-1){30}} \put(220,370){\line(-3,-1){30}}

\end{picture}

\noindent
$\min w_0=-2$, ${\rm rank}\bH^0_{red}(R,w)=8$, ${\rm rank}\bH^1(R,w)=2$, and $eu(\bH^*(R,w))=8=\dim M=h(c)$.
In particular, we constructed a  set of cohomology groups $\bH^*(R, w)$ whose Euler characteristic is the
delta-invariant. This is a `categorification of the $\delta$--invariant'.
This case of curves will  be fully exploited in \cite{AgNeCurves}.

From analytic point of view, one can show that
the above function $h$ is the Hilbert function associated with the
filtration of valuations associated with the normalization.
This can be verified e.g. by using the
formula of the analytic Hilbert function via the multivariable  Alexander polynomials, see \cite{GorNem2015}.

In particular, in this case, the combinatorial lattice cohomology associated with
the lattice points below the diagram agrees with the analytic lattice cohomology
associated with the valuations given by the normalization.
(Nevertheless, we warn the reader
that for a more general $N\Gamma$, several filtrations --- projected, order, valuative ---
might all be  different.)

\end{example}

\begin{example}\label{ex:twonodesB}
Consider the hypersurface $x^{13}+y^{13}+x^2y^2+z^3$ with Newton nondegenerate principal part.
Its dual resolution graph is

\begin{center}
\begin{picture}(140,40)(80,35)
\put(110,60){\circle*{4}}
\put(140,60){\circle*{4}}
\put(170,60){\circle*{4}}
\put(200,60){\circle*{4}}
\put(80,60){\circle*{4}}
\put(50,60){\circle*{4}}
\put(230,60){\circle*{4}}
\put(260,60){\circle*{4}}
\put(50,60){\line(1,0){210}}
\put(80,60){\line(0,-1){20}}
\put(80,40){\circle*{4}}
\put(230,60){\line(0,-1){20}}
\put(230,40){\circle*{4}}
\put(50,70){\makebox(0,0){\small{$-2$}}}
\put(80,70){\makebox(0,0){\small{$-1$}}}
\put(110,70){\makebox(0,0){\small{$-7$}}}
\put(140,70){\makebox(0,0){\small{$-3$}}}
\put(170,70){\makebox(0,0){\small{$-3$}}}
\put(200,70){\makebox(0,0){\small{$-7$}}}
\put(230,70){\makebox(0,0){\small{$-1$}}}
\put(260,70){\makebox(0,0){\small{$-2$}}}
\put(95,40){\makebox(0,0){\small{$-3$}}}
\put(215,40){\makebox(0,0){\small{$-3$}}}
\end{picture}
\end{center}

Let  $N\Gamma$ be its
Newton diagram.  It has two faces with normal vectors
$(6,33,26)$ and $(33,6,26)$. The number of lattice point under the diagram is $p_g=5$, they are
$(1,1,1)$, $(2,1,1)$, $(3,1,1)$, $(1,2,1)$, $(1,3,1)$. Furthermore, $c$ (defined as in \ref{ex:NNGamma})
is exactly the restriction of $Z_K$ to the coordinates of the nodes (cf.  \cite{NSig,NSigNN}).
Hence, in this case $R=R(0,Z_K|_{\tiny{\mbox{nodes}}})=R((0,0), (14,14))$. Then a computation shows that
the combinatorial weight function $w_0:R\to \Z$ associated with $(M, {\rm deg})$ from \ref{ex:NNGamma}
(that is, associated with the Newton diagram) is exactly the
table  obtained by the procedure of the (topological)  Reduction Theorem applied for the topological
RR-function  $\chi$. Hence we have the coincidence of the corresponding  lattice cohomologies as well:
$\bH^*(R,w)=\bH^*_{top}(M)$.
Furthermore, for this diagram, the weight function associated with the
divisorial filtration  restricted   to $R$ agrees with combinatorial  weight function  as well (see e.g. \cite{Lem,BaldurFiltr}).
These three common tables are the following:

\begin{center}
\begin{picture}(0,160)(190,-5)

\put(270,100){\makebox(0,0)[l]{\small{$\bH^1=\calt_0(1)$}}}
\put(270,80){\makebox(0,0)[l]{\small{$\bH^0=\calt_{-2}^+\oplus \calt_{-2}(1)^3\oplus \calt_0(1)^2$}}}
\put(270,60){\makebox(0,0)[l]{\small{$\min w_0=-1$}}}
\put(270,40){\makebox(0,0)[l]{\small{$eu=5$}}}

\put(15,0){\small$0$}\put(30,0){\small$1$}\put(45,0){\small$0$}\put(60,0){\small$0$}\put(75,0){\small$0$}
\put(90,0){\small$0$}\put(105,0){\small$0$}\put(120,0){\small$1$}\put(135,0){\small$0$}\put(150,0){\small$0$}
\put(165,0){\small$0$}\put(180,0){\small$0$}\put(195,0){\small$0$}\put(210,0){\small$1$}\put(225,0){\small$1$}
\put(12,-2){\line(1,0){11}}\put(23,-2){\line(0,1){10}}\put(23,8){\line(-1,0){11}}\put(12,8){\line(0,-1){10}}

\put(15,10){\small$1$}\put(30,10){\small$1$}\put(45,10){\small$0$}\put(60,10){\small$0$}\put(75,10){\small$0$}
\put(90,10){\small$0$}\put(105,10){\small$0$}\put(120,10){\small$1$}\put(135,10){\small$0$}\put(150,10){\small$0$}
\put(165,10){\small$0$}\put(180,10){\small$0$}\put(195,10){\small$0$}\put(210,10){\small$1$}\put(225,10){\small$1$}

\put(15,20){\small$0$}\put(30,20){\small$0$}\put(38,20){\small$-1$}\put(53,20){\small$-1$}\put(68,20){\small$-1$}
\put(83,20){\small$-1$}\put(98,20){\small$-1$}\put(120,20){\small$0$}\put(128,20){\small$-1$}\put(143,20){\small$-1$}
\put(158,20){\small$-1$}\put(173,20){\small$-1$}\put(188,20){\small$-1$}\put(210,20){\small$0$}\put(225,20){\small$0$}
\put(36,18){\line(1,0){76}}\put(112,18){\line(0,1){50}}\put(112,68){\line(-1,0){76}}\put(36,68){\line(0,-1){50}}
\put(126,18){\line(1,0){76}}\put(202,18){\line(0,1){50}}\put(202,68){\line(-1,0){76}}\put(126,68){\line(0,-1){50}}

\put(15,30){\small$0$}\put(30,30){\small$0$}\put(38,30){\small$-1$}\put(53,30){\small$-1$}\put(68,30){\small$-1$}
\put(83,30){\small$-1$}\put(98,30){\small$-1$}\put(120,30){\small$0$}\put(128,30){\small$-1$}\put(143,30){\small$-1$}
\put(158,30){\small$-1$}\put(173,30){\small$-1$}\put(188,30){\small$-1$}\put(210,30){\small$0$}\put(225,30){\small$0$}

\put(15,40){\small$0$}\put(30,40){\small$0$}\put(38,40){\small$-1$}\put(53,40){\small$-1$}\put(68,40){\small$-1$}
\put(83,40){\small$-1$}\put(98,40){\small$-1$}\put(120,40){\small$0$}\put(128,40){\small$-1$}\put(143,40){\small$-1$}
\put(158,40){\small$-1$}\put(173,40){\small$-1$}\put(188,40){\small$-1$}\put(210,40){\small$0$}\put(225,40){\small$0$}

\put(15,50){\small$0$}\put(30,50){\small$0$}\put(38,50){\small$-1$}\put(53,50){\small$-1$}\put(68,50){\small$-1$}
\put(83,50){\small$-1$}\put(98,50){\small$-1$}\put(120,50){\small$0$}\put(128,50){\small$-1$}\put(143,50){\small$-1$}
\put(158,50){\small$-1$}\put(173,50){\small$-1$}\put(188,50){\small$-1$}\put(210,50){\small$0$}\put(225,50){\small$0$}

\put(15,60){\small$0$}\put(30,60){\small$0$}\put(38,60){\small$-1$}\put(53,60){\small$-1$}\put(68,60){\small$-1$}
\put(83,60){\small$-1$}\put(98,60){\small$-1$}\put(120,60){\small$0$}\put(128,60){\small$-1$}\put(143,60){\small$-1$}
\put(158,60){\small$-1$}\put(173,60){\small$-1$}\put(188,60){\small$-1$}\put(210,60){\small$0$}\put(225,60){\small$0$}

\put(15,70){\small$1$}\put(30,70){\small$1$}\put(45,70){\small$0$}\put(60,70){\small$0$}\put(75,70){\small$0$}
\put(90,70){\small$0$}\put(105,70){\small$0$}\put(120,70){\small$1$}\put(135,70){\small$0$}\put(150,70){\small$0$}
\put(165,70){\small$0$}\put(180,70){\small$0$}\put(195,70){\small$0$}\put(210,70){\small$1$}\put(225,70){\small$1$}
\put(122,73){\circle{12}}

\put(15,80){\small$0$}\put(30,80){\small$0$}\put(38,80){\small$-1$}\put(53,80){\small$-1$}\put(68,80){\small$-1$}
\put(83,80){\small$-1$}\put(98,80){\small$-1$}\put(120,80){\small$0$}\put(128,80){\small$-1$}\put(143,80){\small$-1$}
\put(158,80){\small$-1$}\put(173,80){\small$-1$}\put(188,80){\small$-1$}\put(210,80){\small$0$}\put(225,80){\small$0$}
\put(36,78){\line(1,0){76}}\put(112,78){\line(0,1){50}}\put(112,128){\line(-1,0){76}}\put(36,128){\line(0,-1){50}}
\put(126,78){\line(1,0){76}}\put(202,78){\line(0,1){50}}\put(202,128){\line(-1,0){76}}\put(126,128){\line(0,-1){50}}

\put(15,90){\small$0$}\put(30,90){\small$0$}\put(38,90){\small$-1$}\put(53,90){\small$-1$}\put(68,90){\small$-1$}
\put(83,90){\small$-1$}\put(98,90){\small$-1$}\put(120,90){\small$0$}\put(128,90){\small$-1$}\put(143,90){\small$-1$}
\put(158,90){\small$-1$}\put(173,90){\small$-1$}\put(188,90){\small$-1$}\put(210,90){\small$0$}\put(225,90){\small$0$}

\put(15,100){\small$0$}\put(30,100){\small$0$}\put(38,100){\small$-1$}\put(53,100){\small$-1$}\put(68,100){\small$-1$}
\put(83,100){\small$-1$}\put(98,100){\small$-1$}\put(120,100){\small$0$}\put(128,100){\small$-1$}\put(143,100){\small$-1$}
\put(158,100){\small$-1$}\put(173,100){\small$-1$}\put(188,100){\small$-1$}\put(210,100){\small$0$}\put(225,100){\small$0$}

\put(15,110){\small$0$}\put(30,110){\small$0$}\put(38,110){\small$-1$}\put(53,110){\small$-1$}\put(68,110){\small$-1$}
\put(83,110){\small$-1$}\put(98,110){\small$-1$}\put(120,110){\small$0$}\put(128,110){\small$-1$}\put(143,110){\small$-1$}
\put(158,110){\small$-1$}\put(173,110){\small$-1$}\put(188,110){\small$-1$}\put(210,110){\small$0$}\put(225,110){\small$0$}

\put(15,120){\small$0$}\put(30,120){\small$0$}\put(38,120){\small$-1$}\put(53,120){\small$-1$}\put(68,120){\small$-1$}
\put(83,120){\small$-1$}\put(98,120){\small$-1$}\put(120,120){\small$0$}\put(128,120){\small$-1$}\put(143,120){\small$-1$}
\put(158,120){\small$-1$}\put(173,120){\small$-1$}\put(188,120){\small$-1$}\put(210,120){\small$0$}\put(225,120){\small$0$}

\put(15,130){\small$1$}\put(30,130){\small$1$}\put(45,130){\small$0$}\put(60,130){\small$0$}\put(75,130){\small$0$}
\put(90,130){\small$0$}\put(105,130){\small$0$}\put(120,130){\small$1$}\put(135,130){\small$0$}\put(150,130){\small$0$}
\put(165,130){\small$0$}\put(180,130){\small$0$}\put(195,130){\small$0$}\put(210,130){\small$1$}\put(225,130){\small$1$}

\put(15,140){\small$1$}\put(30,140){\small$1$}\put(45,140){\small$0$}\put(60,140){\small$0$}\put(75,140){\small$0$}
\put(90,140){\small$0$}\put(105,140){\small$0$}\put(120,140){\small$1$}\put(135,140){\small$0$}\put(150,140){\small$0$}
\put(165,140){\small$0$}\put(180,140){\small$0$}\put(195,140){\small$0$}\put(210,140){\small$1$}\put(225,140){\small$0$}
\put(222,138){\line(1,0){11}}\put(233,138){\line(0,1){10}}\put(233,148){\line(-1,0){11}}\put(222,148){\line(0,-1){10}}
\end{picture}
\end{center}

In particular, for this germ, $\bH^*_{an,0}(X,o)=\bH^*_{top,0}(M)$.

From the point of view of deformations, let us consider the following Newton nondegenerate germs:
$\{x^{13}+y^{13}+x^3y^2+x^2y^3+z^3=0\}$, or $\{x^{14}+y^{14}+x^2y^2+z^3=0\}$, or
$\{x^{13}+y^{9}z+x^2y^2+z^3=0\}$. All of them are $p_g$-constant deformations of
the germ $\{x^{13}+y^{13}+x^2y^2+z^3=0\}$ treated above. Then for any  of them,  the
 corresponding  $w_0$--table (determined by the Newton filtration)
in the rectangle given by $Z_K$ reduced to the nodes  agree with the corresponding $\chi$--table
(though, the resolution graphs are rather  different, and the size of $R(0,Z_K|_{\tiny{\mbox{nodes}}})$ is also changing). Furthermore, in all these cases,
the identity
$\bH^*_{an,0}(X,o)=\bH^*_{top,0}(M)$ holds, and this module  stays stable under the corresponding deformations.
\end{example}

\end{document}